\crefname{hypothesis}{Hypothesis}{Hypotheses}
\title{Variational Properties of Decomposable Functions. Part II: Strong Second-Order Theory\thanks{Submitted to the editors DATE.
\funding{Andre Milzarek was partly supported by the National Natural Science Foundation of China (Foreign Young Scholar Research Fund Project) under Grant No$.$ 12150410304, by the Shenzhen Science and Technology Program under Grant No$.$ RCYX20221008093033010, and by Shenzhen Stability Science Program 2023, Shenzhen Key Lab of Multi-Modal Cognitive Computing.}}}
\author{Wenqing Ouyang\thanks{School of Data Science, The Chinese University of Hong Kong, Shenzhen (CUHK-Shenzhen), Guangdong, 518172, P.R. China (\email{wenqingouyang1@link.cuhk.edu.cn} and \email{andremilzarek@cuhk.edu.cn}).}
  \and Andre Milzarek\footnotemark[2]%\thanks{School of Data Science (SDS), Shenzhen Research Institute of Big Data (SRIBD), The Chinese University of Hong Kong, Shenzhen, China (\email{andremilzarek@cuhk.edu.cn}).}
 }
\DeclareMathOperator{\diag}{diag}
\tikzset{fontscale/.style = {font=\relsize{#1}}
    }
\DeclareMathOperator*{\argmin}{argmin}
 \newcommand\conv{\mathrm{conv}}
\definecolor{ivory}{RGB}{208,205,193}
\definecolor{ivoryt}{RGB}{198,195,183}
\definecolor{cuhkp}{RGB}{98,56,105} 	% purple dark
\definecolor{cuhkpl}{RGB}{152,24,147} 	% purple light
\definecolor{cuhkb}{RGB}{219,160,1} 	% ocher
\definecolor{cuhkbd}{RGB}{178,129,0} 	% ocher dark
\definecolor{cuhkr}{RGB}{88,35,155}  	% magenta-red
\definecolor{oranl}{RGB}{223,149,86}
\definecolor{turql}{RGB}{53,130,134}  		
\definecolor{blackp}{RGB}{0,0,0} 
\definecolor{redp}{RGB}{255,0,0}
\definecolor{orangep}{RGB}{255,128,0}
\definecolor{brownp}{RGB}{128,77,0}
\definecolor{yellowp}{RGB}{255,230,0}
\definecolor{greenp}{RGB}{128,230,0}
\definecolor{bluep}{RGB}{0,128,255}
\definecolor{purplep}{RGB}{152,24,147}
\definecolor{pinkp}{RGB}{230,0,128}       
\definecolor{amg}{RGB}{17,140,17}
\newtheorem{thm}{theorem}[section]
\newtheorem{lem}[thm]{Lemma}
\newtheorem{prop}[thm]{Proposition}
\newtheorem{rem}[thm]{Remark}
\newcommand{\revise}[1]{{\color{black}#1}}
\newcommand{\myrevise}[1]{{\color{black}#1}}
\newtheorem{defn}[thm]{Definition}
\newtheorem{assum}[thm]{Assumption}
\newtheorem{example}[thm]{Example}
\newcommand{\RNum}[1]{\uppercase\expandafter{\romannumeral #1\relax}}
\newcommand{\rnum}[1]{\expandafter{\romannumeral #1\relax}}
\newcommand\Fnors{F^{\tau}_{\mathrm{nor}}}
\newcommand\cM{\mathcal{M}^{\tau}}
\newcommand\cC{\mathcal{C}}
\newcommand\cR{\mathcal{R}}
\newcommand\cS{\mathcal{S}}
\newcommand\epii{\mathrm{epi}}
\newcommand\cN{\mathcal{N}}
\newcommand\cG{\mathcal{G}}
\newcommand\bS{\mathbb{S}}
\newcommand{\D}{\mathcal D}
\newcommand\ri{\mathrm{ri}}
\newcommand\dist{\mathrm{dist}}
\newcommand\dom{\mathrm{dom}}
\newcommand\lin{\mathrm{lin}}
\newcommand\spa{\mathrm{span}}
\newcommand{\nlp}{\mathrm{(NLP)}}
\newcommand\rd{\mathrm{d}}
\DeclareMathOperator*{\elim}{e-lim}
\DeclareMathOperator*{\eliminf}{e-liminf}
\newcommand\aff{\mathrm{aff}}
\newcommand\proxs{\mathrm{prox}_{\tau\varphi}}
\newcommand\R{\mathbb{R}}
\newcommand\Rn{\mathbb{R}^n}
\newcommand\Rm{\mathbb{R}^m}
\newcommand\n{\mathbb{N}}
\newcommand\Rex{(-\infty,\infty]}
\newcommand\Rexx{[-\infty,\infty]}
\newcommand{\iprod}[2]{\langle #1, #2 \rangle}
\newcommand\vp{\varphi}
\newcommand\half{\frac12}
\newcommand\Fnor[1]{F^{\tau}_{\mathrm{nor}}(#1)}
\newcommand\oFnor{F^{\tau}_{\mathrm{nor}}}
\newcommand\Fnat[1]{F^{\tau}_{\mathrm{nat}}(#1)}
\newcommand\oFnat{F^{\tau}_{\mathrm{nat}}}
\newcommand\gph{\mathrm{gph}}
\newcommand\crit{\mathrm{crit}}
\newcommand\SSOSC{\mathrm{SSOSC}}
\newcommand{\affS}{\mathsf{A}}
\newcommand{\be}{\begin{equation}}
\newcommand{\ee}{\end{equation}}
\newcommand{\proj}{\mathrm{proj}_X}
\newcommand{\Rmnum}[1]{\expandafter\@slowromancap\romannumeral #1@}
\newcommand\pvpd{\mathcal{Q}}
\newcommand\usotp{\mathcal{UTP}}
\newcommand\sotp{\mathcal{OTP}}
\crefname{section}{section}{sections}
\crefname{subsection}{subsection}{subsections}
\Crefname{figure}{Figure}{Figures}
\begin{document}

\maketitle

% REQUIRED
\begin{abstract}
  Local superlinear convergence of the semismooth Newton method usually necessitates assumptions on the uniform invertibility of the utilized, generalized Jacobian matrices, such as, e.g., BD- or CD-regularity. For certain composite-type problems and nonlinear programs (for which explicit representations of the generalized Jacobians of the associated stationarity equations are available), such regularity assumptions are closely connected to strong second-order sufficient conditions. However, general characterizations are not well understood. In this paper, we investigate a strong second-order sufficient condition ($\SSOSC$) for composite problems whose nonsmooth part has a generalized conic-quadratic second subderivative. We  discuss the relationship between the $\SSOSC$ and other second order-type conditions that involve the generalized Jacobians of the normal map. In particular, these two conditions are equivalent under certain structural assumptions on the generalized Jacobian matrix of the proximity operator. Leveraging second-order variational techniques and properties, we then verify that the introduced structural conditions hold for a broad class of $C^2$-strictly decomposable functions. Finally, it is shown that the $\SSOSC$ is further equivalent to the strong metric regularity of the subdifferential, the normal map, and the natural residual. 
\end{abstract}

% REQUIRED
\begin{keywords}
  Strong second-order sufficient condition, decomposability, second subderivative, strong metric regularity, CD-regularity, semismooth Newton method.
\end{keywords}

% REQUIRED
\begin{AMS}
  90C30, 65K05, 90C06, 90C53
\end{AMS}

%------------------------------------------------------------------------------------------
% INTRODUCTION
%------------------------------------------------------------------------------------------

\section{Introduction}
In this work, we consider composite-type, nonconvex, nonsmooth optimization problems of the form:
\begin{align}
    \label{prob1}
    \min_{x\in\R^n}~\psi(x):=f(x)+\vp(x),
\end{align}
where $f : \Rn \to \R$ is twice continuously differentiable and $\vp : \Rn \to \Rex$ is a lower semicontinuous (lsc), proper, and prox-bounded mapping. We are interested in second-order variational properties of the problem \cref{prob1} and their potential implications for second-order-type methodologies. 

In this context, a common strategy is to express the associated optimality conditions of \cref{prob1} as a nonsmooth equation and to utilize the semismooth Newton method, \cite{qi1993nonsmooth}, to solve this equation. 
 %the usual approach is to reformulate it into a nonlinear equation, and then use semismooth Newton method~\cite{qi1993nonsmooth} to solve this equation. 
 For ease of exposition, let us suppose for a moment that $\vp$ is weakly convex and $\tau$ is sufficiently small such that the proximity operator $\proxs$ is single-valued. The natural residual equation,
\begin{align}
  \label{nat_res}
  \oFnat(x)=0, \quad \oFnat(x):=x-\proxs(x-\tau\nabla f(x)).
\end{align}
defines a popular representation of the first-order optimality conditions of \cref{prob1}, \cite{fukushima1981proximal}. %, parikh2014proximal}. 
Another possible choice is the normal map equation, \cite{robinson1992normal,facchinei2003finite,pieper2015finite}, which is given by:
\begin{align}
  \label{nor_map}
  \oFnor(z)=0, \quad \oFnor(z):=\nabla f(\proxs(z))+{\tau^{-1}}(z-\proxs(z)).
\end{align} 
Taking the normal map as an example, in each iteration, the semismooth Newton method generates a new step via:
\begin{align}
  \label{nor_ssn_iter}
  z^{k+1}=z^k-M_k^{-1}\Fnors(z^k), \quad M_k \in \cM_{\mathrm{nor}}(z^k),
\end{align}
where $\cM_{\mathrm{nor}}(z):=\{ \nabla^2f(\proxs(z))D+\frac{1}{\tau}(I-D): D\in \partial\proxs(z)\}$ serves as a set of generalized Jacobian matrices of $\oFnor$, see, e.g., \cite{pieper2015finite,kunisch2016time,mannel2021hybrid,ouyang2021trust} and \cite{milzarek2014semismooth,ByrChiNocOzt16,milzarek2016numerical,stella2017forward} for counterparts using the natural residual $\oFnat$. The superlinear convergence of the semismooth Newton steps typically requires the uniform invertibility of the matrices $M_k$ around the solution, i.e., the norm of the inverse of $M_k$ has to remain bounded by some $C$ for all sufficiently large $k$. This requirement is also related to the CD-regularity of the normal map $\oFnor$. 
One of the main goals of this paper is to address the question:\vspace{1.2ex}

\begin{mdframed}[style=codebox]
%\begin{itemize}
%\item[] 
\emph{Can the invertibility condition, $\|M_k^{-1}\|\leq C$, be ensured by suitable second-order (sufficient) conditions for \cref{prob1} (similar to the smooth case $\vp \equiv 0$)?}
%\end{itemize}
\end{mdframed}
\subsection{Related Works}
The semismooth Newton method was initially proposed by Qi and Sun for solving nonsmooth equations, \cite{qi1993nonsmooth}. Due to its numerical efficiency, it has been widely applied in various fields,  \cite{facchinei2003finite,ulbrich2011semismooth,zhao2010newtoncg,ByrChiNocOzt16,li2018efficient,milzarek2019stochastic,lin2019clustered}. The BD- and CD-regularity play central roles to guarantee the fast local superlinear convergence of the semismooth Newton method \cite{kummer1988newton,kummer1992newton,qi1993convergence,qi1993nonsmooth,facchinei2003finite}. The theoretical justification of such regularity assumptions is also a key research direction in the analysis of nonsmooth Newton methods. Robinson first introduced the concept of strong regularity of the solution to a generalized equation and the strong second-order sufficient condition in \cite{robinson1980strongly}, which includes nonlinear programming as a special case. Moreover, Robinson showed that the strong second-order sufficient condition together with the LICQ (linear independence constraint qualification) implies the strong regularity of the solution and the converse is also true for smooth nonlinear programs, 
%i.e., the constraints are represented by inequalities or equations involving smooth functions, 
see \cite[Proposition 5.38]{bonnans2013perturbation} and \cite[Theorem 5]{dontchev1996characterizations}. %Outside the regime of smooth nonlinear programming, 
In \cite{bonnans2005perturbation}, under Robinson's constraint qualification, Bonnans and Ramírez verify that for nonlinear second-order cone programming, at a local minimizer, the strong regularity of the KKT solution mapping is equivalent to the strong second-order sufficient condition. In \cite{sun2006strong,chan2008constraint}, it is further shown that these notions are equivalent to the nonsingularity of Clarke's Jacobian matrix of the KKT system for semidefinite programming. This links the CD-regularity of the stationarity equation to the strong second-order sufficient condition. Similar results are available for specific problems whose stationarity systems have explicit generalized Jacobians, which \revise{include}, e.g., Ky-Fan $k$-norm-based problems \cite{ding2014introduction}, matrix spectral conic programming~\cite{guo2015some}, $\ell_1$-conic programming \cite{liu2019characterizations}, and second-order cone programming (SOCP) \cite{wang2010nonsingularity}. Recently, a notion called variational sufficiency, \cite{rockafellar2023augmented}, is shown to be equivalent to strong regularity when the set of constraints is polyhedral \cite{rockafellar2023augmented} or related to the cone of positive semidefinite matrices \cite{wangstrong}. The framework proposed in \cite{rockafellar2023augmented} is applicable in the composite optimization setting, but direct and novel connections between the variational sufficiency condition and the uniform invertibility of the matrices $M_k$, $k\in\mathbb N$, in \cref{nor_ssn_iter} seem mostly unavailable, due to the intrinsic hardness of calculating the generalized Jacobian of the stationarity equation. Notably, for the subspace containing derivative (SCD) mapping, BD-regularity was proven to be equivalent to the so-called SCD regularity, \cite{gfrerer2022scd}, which allows establishing the local superlinear convergence of the semismooth Newton method. The strong second-order sufficient condition is also used in \cite{wang2009properties,wangstrong,kato2007sqp} to prove superlinear convergence of certain second-order-type methods.  

In the context of composite optimization, it is shown in \cite{stella2017forward,ouyang2021trust} that when the stationarity measure is locally smooth at the solution (which typically necessitates the strict complementarity condition to hold), the basic second-order sufficient condition can imply the uniform invertibility of the matrices $M_k$, $k \in \mathbb N$ around the solution. A similar result is established in \cite[Lemma 5.4.4]{milzarek2016numerical} by directly assuming the strict complementarity condition and $C^2$-fully decomposability of $\vp$. %However, in general, we can not expect the strict complementarity assumption to hold at the solution. 

Without the strict complementarity condition and such local smoothness assumption, there is much less one can say about the characterization of the uniform invertibility of the generalized Jacobian of the natural residual or normal map. In \cite{hale2008fixed,stadler2009elliptic,wen2012convergence,milzarek2014semismooth}, $\ell_1$-optimization problems are considered where $\vp$ is the $\ell_1$-norm. It is shown that the strong variant of the second-order sufficient condition ensures the uniform invertibility of the generalized Jacobian of the natural residual. However, it is unknown if similar results hold for a broader and more general class of functions.

% In a parallel line, there are lots of literatures studying the uniform invertibility of generalized Jacobian of the Karush-Kuhn-Tucker systems of nonlinear programming, e.g. see \cite{bonnans2005perturbation,sun2006strong,chan2008constraint,robinson1980strongly}. This uniform invertibility is usually characterized by the positive definiteness of the Hessian of the Lagrangian function on the affine hull of the critical cone, while the second-order sufficient condition only requires this positive definiteness on the critical cone. This is also known as the strong second-order sufficient condition. Although for strong second-order sufficient condition itself, a general framework is proposed in \cite{mordukhovich2015full} by using Mordukhovich's coderivative, its link to Clarke's generalized Jacobian of the KKT system, which is important from an algorithmic perspective, is still limited to several specific problem classes for which the generalized Jacobian can be calculated explicitly, e.g., nonlinear semidefinite programming (NSDP) \cite{sun2006strong}, $\ell_1$ conic programming \cite{liu2019characterizations}, matrix spectral conic programming \cite{guo2015some}, second-order conic programming (SOCP) \cite{wang2010nonsingularity}.

It is well-known that strong metric regularity is equivalent to metric regularity for monotone operators, \cite{kenderov1975semi}. For convex functions, strong metric regularity is further equivalent to the uniform quadratic growth condition, \cite{artacho2008characterization}. For general lsc functions, strong metric regularity of the subdifferential is also equivalent to the local uniform growth condition at a local minimizer, \cite{DruMorNhg14}. Moreover, these conditions are equivalent to the tilt stability of the local minimizer provided that the function is prox-regular and subdifferentially continuous. Tilt stability can be also characterized via the positive definiteness of the generalized Hessian, see \cite{DruMorNhg14}. We are not aware of any literature concerning the equivalence between strong metric regularity of the subdifferential and strong metric regularity of the natural residual, let alone the normal map.

% \begin{itemize}
% \item Stability and $\SSOSC$ in second-order cone programming: \cite{MorOutSar14}; $\SSOSC$ for second-order cone programming: \cite{bonnans2005perturbation,wang2009properties} etc. 
% \item Stability and $\SSOSC$ in piecewise linear programs: \cite{MorSar16}.
% \end{itemize}

\subsection{Contributions}
We investigate a general strong second-order sufficient condition for problem \cref{prob1} and its implications for uniform invertibility assumptions and the semismooth Newton method. Since the second-order differential utilized by Mordukhovich et al$.$ in \cite{mordukhovich2015full} lacks direct connections to the proximity operator, our analysis and formulation of the strong second-order sufficient condition will be based on the second subderivative of $\psi$, which 
%while the second subderivative, \cite{rockafellar2009variational}, 
is closely related to the directional derivative of the proximity operator and its generalized Jacobian, \cite{poliquin1996generalized,rockafellar2009variational}. %Therefore, our analysis and formulation of the strong second-order sufficient condition will be based on the second subderivative of $\psi$. 
The standard second-order sufficient conditions for \cref{prob1} are given by 
\[ \rd^2\psi(\bar x|0)(h)\geq \sigma\|h\|^2 \quad \forall~h \in \cC(\bar x), \] 
where $\sigma > 0$ and $\cC(\bar x):=\{h\in\R^n:\rd\psi(\bar x)(h)=0\}$ is the critical cone, \cite{rockafellar2009variational}. Mimicking the corresponding counterparts in nonlinear programming, the strong second-order sufficient condition for \cref{prob1} should be expressed as a quadratic growth condition of some proper extension of $\rd^2\psi(\bar x|0)$ on the affine hull of $\cC(\bar x)$. As this is not straight- forward for general $\psi$, 
%whose second subderivative $\rd^2\psi(\bar x|0)$ can be too complex to be extended naturally, so 
we narrow our focus to special nonsmooth functions, whose second subderivatives have a generalized conic quadratic structure. It turns out that the class of $C^2$-strictly decomposable is highly ``amenable'' for our purposes, i.e., the second subderivative of such functions is generalized conic quadratic and, in addition, a finer characterization of the generalized Jacobian matrix of the proximity operator is available. Here, $\vp$ is $C^2$-strictly decomposable at a stationary point $\bar x$, if $\vp=\sigma_{\pvpd}\circ F$ is the composition of a sublinear mapping $\sigma_{\pvpd}$ and a smooth mapping $F$, a suitable constraint qualification holds at $\bar x$, and we have $F(\bar x)=0$. %then  generalized conic quadratic of the second subderivative can be obtained, but also a finer characterization of the generalized Jacobian matrix of its proxmity operator is available. 
These additional properties allow us to link the strong second-order sufficient condition to the uniform invertibility of the matrices $M_k$ used in \cref{nor_ssn_iter}. Surprisingly, strong metric regularity of the normal map, the subdifferential $\partial\psi$, and the natural residual can also be shown to be equivalent to the strong second-order sufficient condition. Our new results provide theoretical guarantees for the uniform invertibility assumption in the convergence analysis of the semismooth Newton method without requiring the ubiquitous strict complementarity condition, \cite{milzarek2016numerical,stella2017forward,ouyang2021trust}. In summary, our contributions include:
\begin{itemize}
  \item We propose a strong second-order sufficient condition for nonsmooth functions whose second subderivative has a generalized conic quadratic structure, which includes the class of $C^2$-strictly decomposable functions as a special case. 
  \item We prove that the proposed strong second-order sufficient condition is equivalent to a second-order condition that is based on the generalized Jacobian of the proximity operator. This condition can be shown to be equivalent to the uniform invertibility of the matrices $M_k$ in \cref{nor_ssn_iter} under certain algebraic assumptions on $\partial\proxs$. Leveraging variational techniques, we verify that such algebraic conditions hold for $C^2$-strictly decomposable functions when a mild geometric assumption on the support set $\pvpd$, ($\vp=\sigma_{\pvpd}\circ F$), is imposed. %, where $\vp=\sigma_{\pvpd}\circ F$.
  \item Finally, we show that the proposed strong second-order sufficient condition is further equivalent to the strong metric regularity of the subdifferential $\partial\psi$, the natural residual $\oFnat$, and the normal map $\oFnor$. 
\end{itemize}

\subsection{Notation and Variational Tools} \label{sec:notation}
\textit{General Notation}. Our terminologies and notations follow the standard textbooks \cite{rockafellar1970convex,bonnans2013perturbation,rockafellar2009variational}. %and the first part of this paper \cite{ouyang2023partI}. 
By $\iprod{\cdot}{\cdot}$ and $\|\cdot\|$, we denote the standard Euclidean inner product and norm. For matrices, the norm $\|\cdot \|$ is the standard spectral norm. The sets of symmetric and symmetric, positive semidefinite $n \times n$ matrices are denoted by $\mathbb S^n$ and $\mathbb{S}_{+}^n$, respectively. For two matrices $A, B \in \mathbb S^n$, we write $A\succeq B$ if $A-B$ is positive semidefinite. 

\textit{Sets}. For a sequence of sets $\{C_k\}_k \subseteq \R^n$, the outer and inner limit are defined as:
\begin{align*}
  &{\limsup}_{k\to\infty}\,C_k:=\{x\in\R^n: \exists~\n\ni j_k\to\infty,~x_{j_k}\in C_{j_k},~x_{j_k}\to x\},\\
  &{\liminf}_{k\to\infty}\,C_k:=\{x\in\R^n: \exists~x_{k}\in C_{k},~x_{k}\to x\}.   
\end{align*}
Following the standard Painlev\'{e}-Kuratowski convergence for sets, we write $C_k\to C$ when both limits coincide. This can also be equivalently characterized by the point-wise convergence of the distance functions $\{d_{C_k}\}_k$ to $d_C$. For any closed set $C \subseteq \Rn$, $\Pi_C : \Rn \rightrightarrows \Rn$ denotes the projection mapping onto $C$. If $C$ is closed and convex, then $\Pi_C$ is a single-valued function. In addition, if $C$ is a linear subspace, we will identify $\Pi_C$ with the corresponding projection matrix. We use $\iota_C$ and $\sigma_C$ to denote the indicator and support function of set $C$, respectively. The set $\aff(C)$ is the affine hull of $C$, $\lin(C)$ denotes the lineality space of $C$, and the relative interior of a convex set $C \subseteq \Rn$ is given by $\mathrm{ri}(C)$. 

We also require the notion of second-order tangent sets. \cref{def:tangent-sets} is taken from \cite[Definition 2.54 and 3.28]{bonnans2013perturbation}.
\begin{defn} \label{def:tangent-sets}
    Assume that $C \subseteq \Rn$ is locally closed at $x\in C$. The tangent cone $T_C(x)$ of $C$ at $x$ is given by $T_C(x):=\limsup_{t\downarrow 0}{(C-x)}/{t}$.
    %\[   T_C(x):=\limsup_{t\downarrow 0}\frac{C-x}{t}.      \]
    The outer second-order tangent set of $C$ at $x$ for $h$ is defined as
    \[   T_C^2(x,h):=\limsup_{t\downarrow0}\frac{C-x-th}{\frac12 {t^2}}=\left\{w:\exists~t_k\downarrow 0, \,\dist(x+t_kh+\tfrac{t_k^2}{2}w,C)=o(t^2_k)\right\}  \]
    and the inner second-order tangent set of $C$ at $x$ for $h$ is given by:
    \[   T_C^{i,2}(x,h):=\liminf_{t\downarrow0}\frac{C-x-th}{\frac12{t^2}}=\left\{w: \dist(x+th+\tfrac{t^2}{2}w,C)=o(t^2), \, t\downarrow 0\right\}.  \]
\end{defn}

For a convex set $C \subseteq \Rn$, the normal cone to $C$ at $x \in C$ is given by $N_C(x) = T_C(x)^\circ = \{v: \iprod{v}{y-x} \leq 0, \, \forall~y \in C\}$ where $\cdot^{\circ}$ denotes the polar cone operation.

%We say that $S_1\subset\R^m$ and $S_2\subset \R^n$ are isomorphic with $n\geq m$, if there exists an orthogonal matrix $P\in \R^{n\times n}$ and the natural projection onto the first $m$ coordinates $\Pi_m\in \R^{m\times n}$ such that $S_1=\Pi_mP(S_2)$. 

\textit{Functions, epi-convergence, derivatives}. The effective domain of an extended real-valued function $\theta : \Rn \to \Rexx$ is given by $\dom{(\theta)}=\{x \in \Rn : \theta(x)<\infty\}$ and $\theta$ is called proper if $\theta(x) > -\infty$ and $\dom{(\theta)} \neq \emptyset$. The mapping $\theta$ is said to be $\rho$-weakly convex, $\rho \geq 0$, if $\theta+\frac{\rho}{2}\|\cdot\|^2$ is convex. The set $\epii{(\theta)} :=\{(x,t)\in\R^n\times \R:  \theta(x) \leq t\}$ denotes the epigraph of $\theta :\R^n\to\Rex$. A sequence of extended real-valued functions $\{\theta_k\}_k$ epi-converges to $\theta$ if $\epii(\theta_k) \to\epii(\theta)$. %We write $\theta_k\overset{e}{\rightarrow}\theta$ in such case. 
Epi-convergence of $\{\theta_k\}_k$ to $\theta$ is equivalent to the condition
\be \label{eq:def-epi} \left[ \begin{array}{ll} \liminf_{k \to \infty}~\theta_k(x^k) \,\, \geq \theta(x) & \text{for every sequence } x^k \to x, \\[.5ex] \limsup_{k \to \infty}~\theta_k(x^k) \leq \theta(x) & \text{for some sequence } x^k \to x, \end{array} \right. \quad \forall~x. \ee
If the property \cref{eq:def-epi} holds for fixed $x \in \Rn$, then the epi-limit of $\{\theta_k\}_k$ at $x$ exists and we write $(\elim_{k \to \infty} \theta_k)(x) = \theta(x)$. The  lower epi-limit is given by $(\eliminf_{k \to \infty} \theta_k)(x) = \liminf_{k \to \infty, \, \tilde x \to x} \theta_k(\tilde x)$, cf. \cite[Chapter 7]{rockafellar2009variational}. 

For $\theta:\R^n\to (-\infty,\infty]$ and $x \in\dom(\theta)$, the regular subdifferential is defined as $\hat{\partial}\theta(x):=\{v\in \R^n: \theta(y)\geq \theta(x)+\langle  v,y-x\rangle+o(\|y-x\|)\}$. The subdifferential of $\theta$ at $x$ is then given by $\partial \theta(x):=\limsup_{y \to x, \, \theta(y) \to \theta(x)}\hat{\partial} \theta(y)$. The second subderivative of $\theta$ at $x \in \dom(\theta)$ relative to $\alpha$ in the direction $h \in \Rn$ is the lower epi-limit
%where $y\xrightarrow{f}x$ means that $y\to x$ and $f(y)\to f(x)$. 
%
%
%
%In the following, we briefly introduce different notions of generalized second-order differentiability that will be the basis of our analysis. We will mainly work with the second-order epi-derivative
%
\[ \mathrm{d}^2\theta(x|\alpha)(h) = \liminf_{t\downarrow 0, \, \tilde h \to h}~\Delta_t^2  \;\! \theta(x|\alpha)(\tilde h), \quad \Delta_t^2  \;\! \theta(x|\alpha)(h) := \frac{\theta(x+th)-\theta(x)-t \iprod{\alpha}{h}}{\half t^2}. \]
%
%Here, $\mathrm{d}^2\theta(x|\alpha)(h)$ denotes the {lower second-order subderivative} of $\theta$ at $x$ relative to $\alpha$ in the direction $h \in \Rn$. 
We say that $\theta$ is {twice epi-differentiable} at $x$ for $\alpha \in \Rn$ if the second-order difference quotients $ \Delta_t^2  \;\! \theta(x|\alpha)$ epi-converge in the sense of \cref{eq:def-epi}. %that their epigraph converges as sets. %Let us recall that a function $\varrho : \Rn \to \Rexx$ is called proper if $\varrho(x) > -\infty$ and $\dom{\varrho} \neq \emptyset$.  

For a mapping $F:\mathbb{R}^{n}\rightarrow\mathbb{R}^m$, the set $\mathcal{R}(F):=\{y \in \R^m: \exists~x \in \Rn \, \text{with} \, y=F(x)\}$ is the range of $F$. \revise{For a matrix $Q \in \R^{m \times n}$, $\cR(Q)$ is used to denote the range of $Q$.}
%A function $f$ is said to be lower semicontinuous (lsc) if it is lsc at any $x\in\dom~f$.
%
For a locally Lipschitz continuous, vector-valued mapping $F:\R^n\to \R^m$, the set $\partial_B F(x)$ denotes the Bouligand subdifferential of $F$ at $x$, i.e.,
\[ \partial_B F(x):=\{V\in \R^{m\times n}: x^k\to x,~\text{$F$ is differentiable at $x^k$},~ \revise{\D F(x^k)}\to V\}.                          \]
Clarke's subdifferential (generalized Jacobian) is defined as $\partial F(x) =\conv(\partial_B F(x))$. For $F:\R^n\rightrightarrows \R^m$, $\gph(F) := \{(x,y) \in \Rn \times \Rm: y \in F(x)\}$ is the graph of $F$. Finally, we introduce the notion of metric regularity for set-valued mappings, cf. \cite{DonRoc14}. 
\begin{defn}
    A set-valued mapping $F:\R^n\rightrightarrows\R^m$ is metrically regular at $\bar x$ for $\bar y\in F(\bar x)$, if $\gph(F)$ is locally closed at $(\bar x,\bar y)$ and there are $\kappa \geq 0$ and a neighborhood $U \times V$ of $(\bar x,\bar y)$ such that:
    \[    \dist(x,F^{-1}(y))\leq \kappa\dist(y,F(x)) \quad \forall~x\in U, \quad \forall~y\in V.          \]
  If, in addition, $F^{-1}$ has a single-valued localization around $\bar y$ for $\bar x$, then $F$ is said to be strongly metrically regular at $\bar x$ for $\bar y$.
\end{defn}

We note that a single-valued continuous mapping $F:\R^n\to\R^n$ is strongly metrically regular at $\bar x$ for $F(\bar x)$ iff there is a neighborhood $U$ of $\bar x$ and $\sigma>0$ such that $\sigma \|y-z\| \leq \|F(y)-F(z)\|$ for all $y,z\in U$, cf. \cite[Theorem 1F.2]{DonRoc14}. 

%\begin{rem}
%  \label{rem1-5}
%  We note that by the famous invariance of domain theorem, a single-valued continuous mapping $G:\R^n\to\R^n$ is strongly metrically regular at $(x,G(x))$ iff there is a neighborhood $U$ of $x$ and $\sigma>0$ such that for all $y,z\in U$ it holds that $\|G(y)-G(z)\|\geq \sigma\|y-z\|$. 
%\end{rem}

\textit{Proximal properties}. For a proper, lsc function $\vp : \Rn \to \Rex$ and $\tau > 0$, the proximity operator of $\vp$ is defined as
\[ \proxs : \Rn \rightrightarrows \Rn, \quad \proxs(x) \in \argmin_{y\in\Rn}~\vp(y) + \frac{1}{2\tau}\|x-y\|^2, \]
see \cite{rockafellar2009variational}. The parameter $\tau$ used in the definition of $\proxs$ is assumed to be positive throughout this paper. Next, we define a useful regularity concept for $\vp$. 
%Since we need to deal with proximity operator, the related notions of regularities of functions are needed. 
%
\begin{defn}
    \label{defn2-1}
   We say that $\vp$ is globally prox-regular at $\bar x$ for $\bar v\in \partial \vp(\bar x)$ with constant $\rho$, if $\vp$ is locally lsc at $\bar x$, and there is $\epsilon>0$ such that 
    \[ \forall~x'\in\R^n ,\quad \vp(x')\geq \vp(x)+\langle v,x'-x \rangle-\frac{\rho}{2}\|x'-x\|^2          \]
     for all $\|x-\bar x\|<\epsilon,\|v-\bar v\|<\epsilon,~v\in \partial\vp(x),~\vp(x)<\vp(\bar x)+\epsilon$.  
  \end{defn}

Global prox-regularity is a mild condition and is implied by the prox-boundedness and the (standard) prox-regularity, see \cite[Proposition 8.49(f) and 13.37]{rockafellar2009variational}. 

Let us reconsider problem \cref{prob1}. The set of stationarity points of \cref{prob1} is given by $\crit(\psi) := \{x \in \Rn: 0 \in \partial\psi(x)\}$. Let $\vp$ be globally prox-regular (or prox-bounded and prox-regular) at $x $ for $v = -\nabla f(x)$ with constant $\rho$. Then, for $\tau > 0$ and $\tau\rho < 1$, the proximity operator $\proxs$ is single-valued and Lipschitz continuous around $z = x+\tau v$, cf. \cite{rockafellar2009variational,ouyang2023partI}. In this case, we can define the generalized \revise{derivatives} of $\oFnat$ and $\oFnor$:
\begin{align*} \mathcal M_{\mathrm{nat}}^\tau(x) & := \{M: M = I - D(I-\tau\nabla^2 f(x)), \, D \in \partial \proxs(x-\tau\nabla f(x)) \}, \\ \mathcal M_{\mathrm{nor}}^\tau(z) & := \{M: M = \nabla^2 f(\proxs(z))D + \tfrac{1}{\tau} (I - D), \, D \in \partial \proxs(z) \}. \end{align*}
By \cite{clarke1990optimization}, we have $\partial\Fnor{z}h = \mathcal M_{\mathrm{nor}}^\tau(z)h$ for all $h \in \Rn$.
In addition, if $x \in \crit(\psi)$, then it follows $\Fnat{x} = 0$, $\Fnor{z} = 0$, and $x = \proxs(z)$, see, e.g., \cite[Lemma 2.1]{ouyang2021trust}.   

 \textit{Convex geometry}. %Some of the proofs in \cref{sec:sosub2} require concepts from convex geometry. 
Finally, we discuss several concepts from convex geometry that will be used in \Cref{sec:sosub2}. The face and exposed face of a convex set are defined as follows, cf. \cite{HirLem01,weis2010note}:
\begin{defn}
  For a convex set $C \subseteq \Rn$, a face $F$ of $C$ is a subset of $C$ with the property that $\lambda y+(1-\lambda)z=x\in F$ with $y,z\in C$ and $\lambda\in(0,1)$ implies $[y,z]\subseteq F$. An exposed face $F_{\mathrm{ex}}$ is a subset of $C$ which can be expressed as $F_{\mathrm{ex}} = H\cap C$, where $H$ is a supporting hyperplane of $C$. Equivalently, $F_{\mathrm{ex}}$ is an exposed face of $C$ if there exists $x \neq 0$ such that $F_{\mathrm{ex}} = \{y \in C : \iprod{x}{y} = \sigma_C(x)\}$. Let $\mathcal F_{\mathrm{ex}}$ denote the set of all exposed faces of $C$. The smallest exposed face of $C$ containing $x \in C$ is denoted by $G_{\mathrm{ex}}(x) := \bigcap\,\{ F \in \mathcal F_{\mathrm{ex}}: x \in F\}$.
\end{defn} 

 Noticing $\partial\sigma_C(x) = \{y \in C: \iprod{x}{y} = \sigma_C(x)\}$ \revise{(see, e.g., \cite[Example 11.4]{rockafellar2009variational})}, each exposed face $F_{\mathrm{ex}}$ of the set $C$ can be represented by $\partial\sigma_C(x)$ for an appropriate point $x \neq 0$. Next, we collect several well-known properties of the smallest exposed face $G_{\mathrm{ex}}$, cf. \cite[Lemma 4.6]{weis2010note} and \cite[Lemma 2.2.2]{schneider2014convex}. We provide a self-contained proof of \cref{lem:exposed} in \cref{app:exposed}.
\begin{lem} \label{lem:exposed} Let $C \subseteq \Rn$ be a convex set and let $x \in C$ be given. 
\begin{enumerate}[label=\textup{\textrm{(\roman*)}},topsep=0pt,itemsep=0ex,partopsep=0ex]
\item If $y \in \ri(G_{\mathrm{ex}}(x))$, then we have $N_C(x) = N_C(y)$ and $T_C(x) = T_C(y)$.
\item It holds that $G_{\mathrm{ex}}(x) = \partial\sigma_C(y)$ for all $y \in \ri(N_C(x))$.
\end{enumerate}
\end{lem}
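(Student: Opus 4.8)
The plan is to reduce everything to two elementary facts. The first is the duality $v\in N_C(p)\iff p\in\partial\sigma_C(v)$, valid for any $p\in C$ and any $v\in\Rn$; it is immediate from $\partial\sigma_C(v)=\{y\in C:\iprod{v}{y}=\sigma_C(v)\}$ and the definition of $N_C$. In particular, the exposed faces of $C$ containing a given point $p$ are exactly the sets $\partial\sigma_C(v)$ with $v\in N_C(p)$ (using the convention that $C=\partial\sigma_C(0)$ is the exposed face associated with $v=0$), so that $G_{\mathrm{ex}}(p)=\bigcap_{v\in N_C(p)}\partial\sigma_C(v)$. The second is the standard extension property of relative interiors: if $S$ is convex, $y\in\ri(S)$, and $s\in S$, then there are $s'\in S$ and $\lambda\in(0,1)$ with $y=\lambda s+(1-\lambda)s'$; consequently, any face of $C$ that meets $\ri(S)$, for a convex subset $S\subseteq C$, must contain all of $S$.

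For part (i) I would first establish $G_{\mathrm{ex}}(y)=G_{\mathrm{ex}}(x)$. The inclusion ``$\subseteq$'' is clear since $G_{\mathrm{ex}}(x)$ is an exposed face containing $y$; for ``$\supseteq$'', every exposed face $F$ with $y\in F$ meets $\ri(G_{\mathrm{ex}}(x))$, hence contains $G_{\mathrm{ex}}(x)$, and intersecting over all such $F$ gives $G_{\mathrm{ex}}(x)\subseteq G_{\mathrm{ex}}(y)$. Then, for any $v$: if $v\in N_C(x)$ then $x\in\partial\sigma_C(v)$, so $G_{\mathrm{ex}}(x)\subseteq\partial\sigma_C(v)$, hence $y\in\partial\sigma_C(v)$ and $v\in N_C(y)$; symmetrically, if $v\in N_C(y)$ then $G_{\mathrm{ex}}(x)=G_{\mathrm{ex}}(y)\subseteq\partial\sigma_C(v)$, so $x\in\partial\sigma_C(v)$ and $v\in N_C(x)$. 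Thus $N_C(x)=N_C(y)$. Since $C$ is convex, $T_C(\cdot)$ is a closed convex cone with $N_C(\cdot)=T_C(\cdot)^\circ$, so the bipolar theorem yields $T_C(x)=N_C(x)^\circ=N_C(y)^\circ=T_C(y)$.

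For part (ii), fix $u\in\ri(N_C(x))$. The inclusion $G_{\mathrm{ex}}(x)\subseteq\partial\sigma_C(u)$ holds because $\partial\sigma_C(u)$ is an exposed face containing $x$. For the reverse, by $G_{\mathrm{ex}}(x)=\bigcap_{v\in N_C(x)}\partial\sigma_C(v)$ it suffices to show $\partial\sigma_C(u)\subseteq\partial\sigma_C(v)$ for every $v\in N_C(x)$. Given such a $v$, the assumption $u\in\ri(N_C(x))$ allows writing $u=\mu v+(1-\mu)w$ with $w\in N_C(x)$ and $\mu\in(0,1)$; since $u,v,w\in N_C(x)$ each attain $\sigma_C$ at $x$, we get $\sigma_C(u)=\mu\sigma_C(v)+(1-\mu)\sigma_C(w)$. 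For $z\in\partial\sigma_C(u)\subseteq C$, substituting the convex combination into $\iprod{u}{z}=\sigma_C(u)$ gives $\mu(\sigma_C(v)-\iprod{v}{z})+(1-\mu)(\sigma_C(w)-\iprod{w}{z})=0$; both summands are nonnegative because $z\in C$, hence both vanish, so in particular $\iprod{v}{z}=\sigma_C(v)$, i.e., $z\in\partial\sigma_C(v)$. This proves $\partial\sigma_C(u)=G_{\mathrm{ex}}(x)$.

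The two preliminary facts, the bipolar step, and the degenerate cases ($v=0$; $u=0$, i.e., $x\in\ri(C)$; and, if $C$ is not assumed closed, the possibility of empty exposed faces, which cannot happen for faces through $x$) are all routine. I expect the only genuinely substantive point to be the reverse inclusion in part (ii): it is precisely there that the relative-interior hypothesis on $N_C(x)$ is used in an essential way, through the splitting $u=\mu v+(1-\mu)w$ inside $N_C(x)$ together with the sublinearity of $\sigma_C$, and this is the crux of the argument.
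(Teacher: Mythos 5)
Your proof is correct. For part (i) it is essentially the paper's argument: both rest on the duality $v\in N_C(p)\iff p\in\partial\sigma_C(v)$ together with the fact (Rockafellar, Theorem 18.1, which is exactly your ``face meeting a relative interior point'' lemma) that any face containing $y\in\ri(G_{\mathrm{ex}}(x))$ contains all of $G_{\mathrm{ex}}(x)$, and then pass to tangent cones by polarity; routing this through $G_{\mathrm{ex}}(x)=G_{\mathrm{ex}}(y)$ is only a repackaging. One phrasing quibble: the inclusion $G_{\mathrm{ex}}(y)\subseteq G_{\mathrm{ex}}(x)$ does not need the -- at that point unproved -- claim that $G_{\mathrm{ex}}(x)$ is itself an exposed face; it suffices that every exposed face containing $x$ contains $y$, which is immediate. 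For part (ii) you arrive at the same target as the paper, namely that every $z\in\partial\sigma_C(u)$ with $u\in\ri(N_C(x))$ satisfies $\iprod{v}{z}=\sigma_C(v)$ for all $v\in N_C(x)$ and hence lies in $\bigcap_{v\in N_C(x)}\partial\sigma_C(v)=G_{\mathrm{ex}}(x)$, but by a different device: you split $u=\mu v+(1-\mu)w$ inside $N_C(x)$ via the prolongation property of relative interiors and let sublinearity plus attainment at $x$ force both slack terms to vanish. The paper instead notes $z-x\in\{u\}^\perp\cap T_C(x)$ and invokes the identity $\{u\}^\perp\cap T_C(x)=N_{N_C(x)}(u)=\lin(T_C(x))$ for $u\in\ri(N_C(x))$ (citing Bauschke--Combettes), so that $z-x\perp N_C(x)$ directly. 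Your route is more elementary and self-contained, avoiding tangent-cone machinery and the external citation; the paper's route is shorter once that lineality-space identity is granted and makes the geometric content ($z-x\in\lin(T_C(x))$) explicit. The degenerate cases you defer ($v=0$, $N_C(x)=\{0\}$, non-closed $C$) are glossed over in the paper as well and are harmless in its application, where the set in question is closed and convex.
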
 
 
%\comnew{Add comments on natural residual and normal map here; stationarity. Introduce $\mathcal M_{\mathrm{nor}}^\tau(z)$ and $\mathcal M_{\mathrm{nat}}^\tau(x)$; $\partial \oFnor(z) \subseteq \mathcal M_{\mathrm{nor}}^\tau(z)$.}

\subsection{Organization}
This paper is organized as follows. In \Cref{sec:ssonc}, we introduce the strong second-order sufficient optimality condition ($\SSOSC$) and a related second-order-type condition that is based on the generalized derivatives of the normal map $\oFnor$. We then formulate structural assumptions on $\partial\proxs$ to ensure equivalence between those two optimality conditions. In \Cref{sec:eq_ssonc}, we analyze the second-order variational properties of strictly decomposable functions and we verify that the postulated structural assumptions on $\partial\proxs$ hold if $\vp$ is $C^2$-strictly decomposable under a mild, additional assumption on the support set of the decomposition. In \Cref{sec:eq_ssosc}, we prove equivalence between the $\SSOSC$ and other variational concepts, including the strong metric regularity of the subdifferential $\partial \psi$, the normal map, and the natural residual, and several other second-order-type conditions. %We conclude this paper in \cref{sec:conclu}.

%------------------------------------------------------------------------------------------
% STRONG SECOND ORDER OPTIMALITY CONDITIONS
%------------------------------------------------------------------------------------------

\section{Connecting the $\SSOSC$ and Generalized Jacobians}
\label{sec:ssonc}
In this section, we formally define the $\SSOSC$ and discuss its equivalence to a second-order condition which can ensure the uniform invertibility of the matrices $M_k$, $k\in\mathbb N$. Specifically, we propose two algebraic assumptions on the generalized Jacobian matrix of the proximity operator that can guarantee such equivalence.  

The strong second-order condition given in \cite{mordukhovich2012second,mordukhovich2015full} fully characterizes the strong metric regularity of the subdifferential of the objective function by using the second-order subdifferential based on Mordukhovich's coderivative. However, a drawback of this characterization is the missing link to generalized Jacobians of the stationarity equation (i.e., the underlying KKT system, the natural residual, or the normal map equation) and to the semismooth Newton method. This is of course understandable as the calculation of the generalized Jacobian is nontrivial in general. As a result, connections between the $\SSOSC$ and the generalized Jacobian of the stationarity equation are only known for problems whose stationarity equation has \emph{explicit, closed-form} generalized Jacobians, \cite{sun2006strong,wang2010nonsingularity,guo2015some,liu2019characterizations}.

Since the second subderivative and the derivative of the proximity operator are closely connected, \cite{poliquin1996generalized}, we plan to use the second subderivative rather than the second-order differential to formulate the $\SSOSC$. According to \cite[Theorem 13.24(c)]{rockafellar2009variational}, the standard second-order sufficient condition takes the form $\rd^2\psi(\bar x|0)(h)>0$ for all $h\neq 0$. By the homogeneity and lower semicontinuity of the second subderivative, this condition can be equivalently expressed as  
%$\rd^2\psi(\bar x|0)(h)\geq \sigma\|h\|^2$ for all $h\in \R^n$ and some $\sigma>0$. Clearly, this can be expressed as 
%
\begin{equation} \label{eq:soc} \rd^2\psi(\bar x|0)(h)\geq \sigma\|h\|^2 \quad \forall~h\in\dom(\rd^2\psi(\bar x|0)) \end{equation}
for some $\sigma>0$. As in the nonlinear programming case, \cite{bonnans2005perturbation,sun2006strong}, we expect the $\SSOSC$ to strenghten \cref{eq:soc} using a proper extension of $\rd^2\psi(\bar x|0)$ on the affine hull of its domain. Such a generalization is not always easy, since, in contrast to the specific applications considered in the literature, the second subderivative of $\vp$ can be too complex to be extended naturally. Hence, we narrow our focus on a special class of functions whose second subderivative provides a natural extension to the affine hull of the critical cone. %$\aff(\cC(\bar x))$.

\begin{assumption}
    Let $\bar x \in \crit{(\psi)}$, $\bar v = -\nabla f(\bar x)$, and $\bar z=\bar x+\tau \bar v$ be given. 
\begin{enumerate}[label=\textup{\textrm{(A.\arabic*)}},topsep=0pt,itemsep=0.5ex,partopsep=0ex]
    \item \label{A1} The function $\vp$ is globally prox-regular at $\bar x$ for $\bar v$ with constant $\rho$ and $\tau\rho<1$.  
        \item \label{A2} The function $\vp$ is twice epi-differentiable at  $\bar x$ for $\bar v$ and its second subderivative $\rd^2\vp(\bar x|\bar v)$ has a generalized conic quadratic form, i.e.,
        \[ \rd^2\vp(\bar x|\bar v)(h) = \iprod{h}{Qh} + \iota_{S}(h) \quad \forall~h\in\R^n,\] 
        where $Q \in \R^{n \times n}$ is some symmetric matrix with $\cR(Q)\subseteq \aff(S)$ and $S \subseteq \R^n$ is a closed convex cone. 
\end{enumerate}
\end{assumption}

For simplicity, we will often use the notation $\affS := \aff(S)$. We further note that the condition $\cR (Q)\subseteq \affS$ causes no loss of generality, since we can replace $Q$ by $\Pi_{\affS}Q\Pi_{\affS}$. 

\ref{A1} is a default condition to ensure the single-valuedness and Lipschitz continuity of the proximity operator around $\bar z$ (as mentioned in \Cref{sec:notation}). We assume \ref{A1} throughout this paper. \revise{Here, let us stress again that \ref{A1} holds if $\vp$ is prox-bounded and prox-regular (in the standard sense).} Under assumption \ref{A2} and applying \cite[Exercise 13.18]{rockafellar2009variational}, the second subderivative $\rd^2\psi(\bar x|0)$ satisfies
\be \label{eq:psi-gcf} \rd^2\psi(\bar x|0)(h) = \iprod{h}{[\nabla^2 f(\bar x) + Q]h} + \iota_{S}(h), \ee
and thus, it also has a generalized conic quadratic form. In this case, an extension of $\rd^2\psi(\bar x|0)$ from its domain to the affine hull of its domain can be obtained naturally and directly and we can define the following strong second-order condition.

\begin{defn}[Strong Second-order Sufficient Condition]
    Let $\rd^2\psi(\bar x|0)$ have the form \cref{eq:psi-gcf} for a closed convex cone $S$ and $Q \in \mathbb{S}^n$ with $\cR(Q)\subseteq \aff(S)$. The strong second-order sufficient condition $(\SSOSC)$ holds at $\bar x$ if there is $\sigma>0$ such that:
    \begin{align}
        \label{ssosc}
         \langle h,[\nabla^2f(\bar x)+Q]h\rangle\geq \sigma\|h\|^2 \quad \forall~h\in \aff(S).
    \end{align}
\end{defn}

Motivated by the results in \cite{ouyang2021trust}, we consider a second condition that involves the generalized Jacobian of the proximity operator:
% the condition that ``$\|M_k^{-1}\|\leq C$'' can be guaranteed via the following condition:
\begin{align}
    \label{cond_gj}
     D\nabla^2f(\bar x)D+{\tau}^{-1} D(I-D)\succeq \sigma D^2 \quad \forall~D\in\partial\proxs(\bar z).
\end{align}
As shown in \cite{ouyang2021trust}, condition \cref{cond_gj} guarantees CD-regularity of the normal map $\oFnor$ at $\bar z$. To illustrate this fact, let $M \in \partial \oFnor(\bar z)$ and $h \in \Rn$ with $Mh = 0$ be arbitrary. Due to $\partial \oFnor(\bar z)h = \mathcal M_{\mathrm{nor}}^\tau(\bar z)h$, there is $D \in \partial \proxs(\bar z)$ such that $Mh = [\nabla^2 f(\bar x)D + {\tau}^{-1}(I-D)]h$.
% In fact, 
%using $\partial \oFnor(\bar z) \subseteq \mathcal M_{\mathrm{nor}}^\tau(\bar z)$, we may express $M \in \partial \oFnor(\bar z)$ via $M = \nabla^2 f(\bar x)D + {\tau}^{-1}(I-D)$ for some $D \in \partial \proxs(\bar z)$. Next, let $h \in \Rn$ be arbitrary with $Mh = 0$. 
By \cref{cond_gj}, it then follows $Dh = 0$ and we have $0 = [\nabla^2 f(\bar x)-{\tau}^{-1}I]Dh + {\tau}^{-1}h = {\tau^{-1}}h$. This verifies invertibility of $M$ and the desired CD-regularity. 

%Condition \cref{cond_gj} can be viewed as a stronger version of the second-order condition in \cite[Proposition 7.7(i)]{ouyang2021trust}. 
We are interested in the relationship between \cref{ssosc} and \cref{cond_gj}. Clearly, a proper characterization of the generalized Jacobians $\partial\proxs$ is integral to establish a connection between \cref{ssosc} and \cref{cond_gj}. The most direct way to verify the equivalence of \cref{ssosc} and \cref{cond_gj} is to calculate $\partial\proxs$, $Q$, and $\aff(S)$ explicitly. Indeed, such a direct computational verification has been the prevalent proof strategy in applications and has been used successfully in, e.g., nonlinear semidefinite programming \cite{sun2006strong,chan2008constraint}, second order cone programs \cite{bonnans2005perturbation,wang2009properties,wang2010nonsingularity}, and Ky Fan $k$-norm cone programming \cite{ding2012introduction}.  %However, we prefer a more abstract way to cover a broader class of problems. 
In the following, we propose several abstract conditions on $\partial\proxs$ under which \cref{ssosc} is equivalent to \cref{cond_gj}. Utilizing variational techniques, we then show that these conditions hold for a broad class of strictly decomposable problems.

\subsection{A First Structural Condition for \texorpdfstring{$\partial\proxs$}{partial prox}} \label{sec:first-struc} By \revise{assumptions \ref{A1}--\ref{A2} and} \cite[Proposition 2.3]{ouyang2023partI}, it holds that $0 \preceq D \preceq \frac{1}{1-\tau \rho}I$ for all $D \in \partial\proxs(\bar z)$. In addition, the matrix $I+\tau Q$ is positive definite\footnote{\revise{By \ref{A1} and \cite[Proposition 13.49]{rockafellar2009variational}, the function $h \mapsto T(h):=  \rd^2\vp(\bar x|\bar v)(h)+\rho\|h\|^2$ is convex. Given the representation of $\rd^2\vp(\bar x|\bar v)$ in \ref{A2}, this can be true if and only if $Q+\rho I$ is positive semidefinite. In fact, the convexity of $T$ directly implies $\iprod{y-x}{(Q+\rho I)(y-x)} \geq 0$ for all $x,y \in S$ which yields $\iprod{h}{(Q+\rho I)h} \geq 0$ for all $h \in \aff(S)$. Due to $\cR(Q)\subseteq \aff(S)$, $Q+\rho I$ then has to be positive semidefinite.  %(notice that $\cR(Q)\subseteq \aff(S)$, and one can check the second-order expansion of $\rd^2\vp(\bar x|\bar v)$ on some $w\in \ri(S)$). 
Since $\tau\in (0,1/\rho)$, the positive semidefiniteness of $Q+\rho I$ implies positive definiteness of the matrix $I+\tau Q$.}}. We start with a simple algebraic observation. 

\begin{lem}
    \label{lemma5-19}
    Let $D\in \bS^n$ be given with $0\preceq D \preceq \frac{1}{1-\tau \rho}I$. Then, $D$ can be written as $D=\Pi_{\mathcal R(D)}(I+\tau A)^{-1}\Pi_{\mathcal R(D)}$, where $A\in \bS^n$ satisfies $A + \rho I \succeq 0$ and $\mathcal R(A)\subseteq \mathcal R(D)$.
\end{lem}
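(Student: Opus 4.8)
The plan is to reduce the statement to a handful of scalar inequalities by diagonalizing $D$. First I would write $D = \sum_{i=1}^n \lambda_i u_i u_i^\top$ for an orthonormal eigenbasis $u_1,\dots,u_n$ of $\R^n$ and eigenvalues $0 \le \lambda_i \le \frac{1}{1-\tau\rho}$, and set $I_+ := \{i : \lambda_i > 0\}$, so that $\cR(D) = \spa\{u_i : i \in I_+\}$ and $\Pi_{\cR(D)} = \sum_{i \in I_+} u_i u_i^\top$. Then I would define
\[ A := \tfrac1\tau\sum_{i \in I_+}\Bigl(\tfrac{1}{\lambda_i} - 1\Bigr) u_i u_i^\top \in \bS^n, \]
which acts as $\tau^{-1}(\lambda_i^{-1}-1)$ on the eigendirections lying in $\cR(D)$ and vanishes on $\ker(D) = \cR(D)^\perp$; in particular $\cR(A) \subseteq \cR(D)$ holds by construction.

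Next I would verify the remaining two properties directly from this formula. For $A + \rho I \succeq 0$: the eigenvalues of $A + \rho I$ are $\tau^{-1}(\lambda_i^{-1}-1) + \rho = (\tau\lambda_i)^{-1}\bigl(1 - (1-\tau\rho)\lambda_i\bigr)$ for $i \in I_+$, which are nonnegative since $\lambda_i \le (1-\tau\rho)^{-1}$ and $\tau\lambda_i > 0$ (here $\tau\rho < 1$ from \ref{A1} is used), together with the value $\rho \ge 0$ on the directions $u_i$, $i \notin I_+$. For the representation of $D$: the matrix $I + \tau A = \sum_{i \in I_+}\lambda_i^{-1} u_i u_i^\top + \sum_{i \notin I_+} u_i u_i^\top$ is positive definite, hence invertible, with $(I+\tau A)^{-1} = \sum_{i \in I_+}\lambda_i u_i u_i^\top + \sum_{i \notin I_+} u_i u_i^\top$; sandwiching between $\Pi_{\cR(D)}$ annihilates the second sum and fixes the first, so that $\Pi_{\cR(D)}(I+\tau A)^{-1}\Pi_{\cR(D)} = \sum_{i \in I_+}\lambda_i u_i u_i^\top = D$, as claimed.

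I do not anticipate a genuine obstacle here: the argument is a routine spectral computation. The only points requiring (mild) care are the treatment of the kernel directions of $D$ — where $A$ must be set to $0$ so that $\cR(A)\subseteq\cR(D)$ and $A+\rho I = \rho I \succeq 0$ hold simultaneously (here $\rho \ge 0$, being a prox-regularity constant, enters) — and the elementary implication $\lambda_i \le (1-\tau\rho)^{-1} \,\Rightarrow\, 1-(1-\tau\rho)\lambda_i \ge 0$. If a basis-free argument is preferred, the same $A$ can be described as $\tau^{-1}(\bar D^{-1} - I)$ on $\cR(D)$, where $\bar D$ is the invertible restriction of $D$ to its range, extended by $0$ on $\ker(D)$.
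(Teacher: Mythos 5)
Your proposal is correct and follows essentially the same route as the paper: the paper also takes an eigendecomposition $D = P\Lambda P^\top$, defines $A = \frac{1}{\tau}U(\Lambda_1^{-1}-I)U^\top$ on the positive eigendirections (zero on $\ker(D)$), and reads off $\cR(A)\subseteq\cR(D)$, $A+\rho I\succeq 0$, and the representation of $D$ exactly as you do. Your scalar verification of $A+\rho I\succeq 0$ (including the implicit use of $\rho\ge 0$ on the kernel directions) matches the paper's condition $\Lambda_1^{-1}\succeq(1-\tau\rho)I$.
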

\begin{proof}

Let $D = P\Lambda P^\top$ be an eigendecomposition of $D$ with $PP^\top = P^\top P = I$ and eigenvalues $\diag(\lambda) = \Lambda$. Let us assume $\lambda_i > 0$ for $i = 1,\dots, r$ and $\lambda_i = 0$ for all $i = r+1,\dots, n$ and let us further write $P = [U,  V]$ where $U \in \R^{n\times r}$ and $V \in \R^{n \times (n-r)}$. Defining $\Lambda_1 := \mathrm{diag}(\lambda_1,\dots,\lambda_r)$ and 
\[ A := \frac{1}{\tau} P \begin{pmatrix} \Lambda_1^{-1} - I & 0 \\ 0 & 0 \end{pmatrix} P^\top = \frac{1}{\tau} U(\Lambda_1^{-1} - I)U^\top,  \]
it follows $\Pi_{\mathcal R(D)}(I+\tau A)^{-1}\Pi_{\mathcal R(D)} = U\Lambda_1U^\top = D$, $\mathcal R(A)\subseteq \mathcal R(D)$, $\Lambda_1^{-1} \succeq (1-\tau\rho)I$, and $A + \rho I \succeq 0$.
%    There is no loss of generality if we assume $L_B=\spa\{e_1,\dots,e_r\}$. Then we can write $B$ in the block form $B=\begin{pmatrix}
%        B_1 & 0 \\
%         0 & 0
%    \end{pmatrix}$, where $B_1\in\R^{r\times r}$ is positive definite with $B_1\preceq \frac{1}{1-\tau \rho}I$. Now it suffices to take $A=\frac{1}{\tau}\begin{pmatrix}
%        B_1^{-1}-I & 0 \\
%          0 & 0
%    \end{pmatrix}$.
\end{proof}

\cref{lemma5-19} motivates a first structural condition for the generalized Jacobians: %\vspace{.5ex}
\begin{equation}
    \label{cond1_gj} \tag{P.1}
    \begin{array}{l} \textit{for every $D\in\partial\proxs(\bar z)$, we have:} \\[1ex] 
    \hspace{1ex} D=\Pi_{\mathcal R(D)}(I+\tau A)^{-1}\Pi_{\mathcal R(D)},\;\; \cR(A)\subseteq \mathcal R(D) \subseteq \affS, \;\; A \succeq \Pi_{\mathcal R(D)}Q\Pi_{\mathcal R(D)}, \end{array}
\end{equation}
%
%\begin{mdframed}[style=highlighta]
%\begin{equation}
%    \label{cond1_gj} \tag{P.1}
%    \begin{array}{l} \textit{for every $D\in\partial\proxs(\bar z)$, we have: $D=\Pi_{\mathcal R(D)}(I+\tau A)^{-1}\Pi_{\mathcal R(D)}$}, \\[1ex] \textit{$\cR(A)\subseteq \mathcal R(D) \subseteq \affS$, and $A \succeq \Pi_{\mathcal R(D)}Q\Pi_{\mathcal R(D)}$}, \end{array}
%\end{equation}
%\end{mdframed}
%\vspace{1ex}
%
where $\affS = \aff(S)$. The positive definiteness of the matrix $I+\tau Q$ ensures that $(I+\tau A)^{-1}$ \revise{is a bijective mapping} from $\mathcal R(D)$ to $\mathcal R(D)$. We now show that, under condition \cref{cond1_gj}, the strong second-order sufficient condition \cref{ssosc} implies \cref{cond_gj}.

%The proof relies on simple argument in linear algebra:
\begin{prop}
    \label{ssoscimcondgj}
    Assume \ref{A1}, \ref{A2}, and \cref{cond1_gj}. Then, \cref{ssosc} implies \cref{cond_gj}.
\end{prop}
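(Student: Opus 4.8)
The plan is to fix an arbitrary $D\in\partial\proxs(\bar z)$, abbreviate $H:=\nabla^2 f(\bar x)$ and $\Pi:=\Pi_{\cR(D)}$, and reduce the semidefinite inequality \cref{cond_gj} to a statement living entirely on the subspace $\cR(D)$. The first observation is that $DHD+\tau^{-1}D(I-D)-\sigma D^2$ is symmetric with range contained in $\cR(D)$ and is annihilated by $\ker D=\cR(D)^{\perp}$; hence the associated quadratic form $h\mapsto\langle h,[DHD+\tau^{-1}D(I-D)-\sigma D^2]h\rangle$ only sees the $\cR(D)$-component of $h$, so that \cref{cond_gj} is equivalent to the same inequality tested on $h\in\cR(D)$.

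Next I would exploit the structural representation from \cref{cond1_gj}: $D=\Pi(I+\tau A)^{-1}\Pi$ with $\cR(A)\subseteq\cR(D)\subseteq\affS$ and $A\succeq\Pi Q\Pi$. Since $\cR(A)\subseteq\cR(D)$, the operator $I+\tau A$ leaves $\cR(D)$ and $\cR(D)^{\perp}$ invariant (acting as the identity on the latter), and on $\cR(D)$ one has $\langle h,(I+\tau A)h\rangle\ge\langle h,(I+\tau Q)h\rangle>0$ for $h\ne 0$, using $\Pi h=h$, $A\succeq\Pi Q\Pi$, and the positive definiteness of $I+\tau Q$. Thus $G:=(I+\tau A)|_{\cR(D)}$ is a positive definite self-map of $\cR(D)$ whose inverse is exactly the restriction $D|_{\cR(D)}$. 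Substituting $D|_{\cR(D)}=G^{-1}$ into the identity $DHD+\tau^{-1}D(I-D)-\sigma D^2=D[H-(\tau^{-1}+\sigma)I]D+\tau^{-1}D$ and conjugating by the invertible symmetric $G$ converts the required inequality on $\cR(D)$ into
\[ \Pi H\Pi+A-\sigma I\succeq 0 \quad\text{on }\cR(D), \]
where I use $\tau^{-1}G=\tau^{-1}I+A|_{\cR(D)}$ to cancel the $(\tau^{-1}+\sigma)I$ term against the $\tau^{-1}I$ term.

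Finally I would estimate $\Pi H\Pi+A\succeq\Pi H\Pi+\Pi Q\Pi=\Pi(H+Q)\Pi$ by \cref{cond1_gj}, so that it suffices to check $\langle h,\Pi(H+Q)\Pi h\rangle\ge\sigma\|h\|^2$ for all $h\in\cR(D)$. For such $h$ we have $\Pi h=h$, hence the left-hand side equals $\langle h,[\nabla^2 f(\bar x)+Q]h\rangle$, and since $h\in\cR(D)\subseteq\affS=\aff(S)$ this is bounded below by $\sigma\|h\|^2$ precisely by \cref{ssosc}. Tracing the equivalences backward then yields \cref{cond_gj}.

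The only place requiring genuine care is the reduction carried out in the first two paragraphs — justifying that \cref{cond_gj} may be restricted to $\cR(D)$, that $D|_{\cR(D)}=G^{-1}$, and that $G\succ 0$ so that conjugation preserves the ordering $\succeq$. Once these are in place, what remains is a short algebraic identity followed by a direct appeal to \cref{ssosc}; I do not expect any deeper obstacle, the proposition being essentially a structured linear-algebra consequence of \cref{cond1_gj}.
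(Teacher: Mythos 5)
Your proposal is correct and follows essentially the same route as the paper's proof: both use the representation $D=\Pi_{\cR(D)}(I+\tau A)^{-1}\Pi_{\cR(D)}$ from \cref{cond1_gj}, restrict the inequality to $\cR(D)$, perform the change of variables $h\mapsto (I+\tau A)^{-1}h$ (your conjugation by $G$) to reduce \cref{cond_gj} to $\Pi_{\cR(D)}\nabla^2 f(\bar x)\Pi_{\cR(D)}+A\succeq\sigma I$ on $\cR(D)$, and then conclude via $A\succeq\Pi_{\cR(D)}Q\Pi_{\cR(D)}$ and \cref{ssosc} on $\cR(D)\subseteq\affS$. The only differences are presentational (you make the restriction-to-$\cR(D)$ step and the invertibility of $I+\tau A$ explicit, and organize the algebra through the identity $DHD+\tau^{-1}D(I-D)-\sigma D^2=D[H-(\tau^{-1}+\sigma)I]D+\tau^{-1}D$ rather than expanding $\tau^{-1}D(I-D)$ directly), so no substantive gap remains.
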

\begin{proof}
    By \cref{cond1_gj} and setting $\cS := \cR(D)$, the condition $D\nabla^2f(\bar x)D+\frac{1}{\tau} D(I-D)\succeq \sigma D^2$ can be written as \begin{align*}
         &\Pi_{\cS} (I+\tau A)^{-1}\Pi_{\cS}\nabla^2f(\bar x)\Pi_{\cS}(I+\tau A)^{-1} \Pi_{\cS}
         \\& \hspace{4ex}+\Pi_{\cS}(I+\tau A)^{-1}A(I+\tau A)^{-1}\Pi_{\cS}\succeq \sigma \Pi_{\cS} (I+\tau A)^{-2}\Pi_{\cS},
    \end{align*}
    which is equivalent to
    \[ \langle (I+\tau A)^{-1}h ,(\Pi_{\cS}\nabla^2 f(\bar x)\Pi_{\cS}+A)(I+\tau A)^{-1}h \rangle  \geq \sigma \|(I+\tau A)^{-1}h\|^2 \quad \forall~h\in\cS.  \]
    Notice that since $\cR(A)\subseteq \cS$, $(I+\tau A)^{-1}$ is a bijective mapping from $\cS$ to $\cS$. Hence, replacing $(I+\tau A)^{-1}h$ by $h$, the last condition is further equivalent to:
    \[     \langle h ,(\Pi_{\cS}\nabla^2 f(\bar x)\Pi_{\cS}+A)h \rangle  \geq \sigma \|h\|^2 \quad \forall~h\in\cS.              \]
    Using \cref{cond1_gj} and \cref{ssosc}, it follows $\Pi_{\cS}\nabla^2 f(\bar x)\Pi_{\cS}+A\succeq \Pi_{\cS}\nabla^2 f(\bar x)\Pi_{\cS}+\Pi_{\cS}Q\Pi_{\cS}\succeq \sigma\Pi_{\cS}^2$ which concludes the proof.                          
\end{proof}

Mimicking the proof of \cref{ssoscimcondgj}, we can show that the condition %\vspace{.5ex}
\begin{equation}
    \label{cond2_gj} \tag{P.2}
      \Pi_{\affS} (I+\tau Q)^{-1}\Pi_{\affS}\in \partial \proxs(\bar z).
\end{equation}
%
%\begin{mdframed}[style=highlightb]
%\begin{equation}
%    \label{cond2_gj} \tag{P.2}
%      \Pi_{\affS} (I+\tau Q)^{-1}\Pi_{\affS}\in \partial \proxs(\bar z).
%\end{equation}
%\end{mdframed}
%\vspace{1ex}
%
ensures the implication ``\cref{cond_gj} $\implies$ \cref{ssosc}''. We will omit an explicit verification here.

\begin{prop}
    \label{condgjimssosc}
    Assume \ref{A1}, \ref{A2}, and \cref{cond2_gj}. Then \cref{cond_gj} implies \cref{ssosc}.
\end{prop}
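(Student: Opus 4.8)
The plan is to mirror the proof of \cref{ssoscimcondgj}, running the chain of equivalences in reverse but now starting from the single matrix $D := \Pi_{\affS}(I+\tau Q)^{-1}\Pi_{\affS}$, which lies in $\partial\proxs(\bar z)$ by assumption \cref{cond2_gj}. First I would note that, since $\cR(Q)\subseteq\affS$, the matrix $(I+\tau Q)^{-1}$ maps $\affS$ bijectively onto $\affS$ (using that $I+\tau Q$ is positive definite, as recorded just before \cref{lemma5-19}); hence $\cR(D)=\affS$, the projection $\Pi_{\cS}$ with $\cS:=\cR(D)=\affS$ equals $\Pi_{\affS}$, and $D=\Pi_{\affS}(I+\tau Q)^{-1}\Pi_{\affS}$ already has precisely the form appearing in \cref{cond1_gj} with $A=\Pi_{\affS}Q\Pi_{\affS}=Q$ (the last identity again by $\cR(Q)\subseteq\affS$).

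Next I would apply \cref{cond_gj} to this particular $D$: it gives $D\nabla^2 f(\bar x)D+\tfrac{1}{\tau}D(I-D)\succeq\sigma D^2$. Following the algebra in the proof of \cref{ssoscimcondgj} verbatim (with $A=Q$ and $\cS=\affS$), this inequality is equivalent to
\[ \iprod{(I+\tau Q)^{-1}h}{(\Pi_{\affS}\nabla^2 f(\bar x)\Pi_{\affS}+Q)(I+\tau Q)^{-1}h}\geq\sigma\|(I+\tau Q)^{-1}h\|^2\quad\forall~h\in\affS, \]
and then, since $(I+\tau Q)^{-1}$ is a bijection of $\affS$ onto itself, substituting $h\leftarrow(I+\tau Q)^{-1}h$ yields
\[ \iprod{h}{(\Pi_{\affS}\nabla^2 f(\bar x)\Pi_{\affS}+Q)h}\geq\sigma\|h\|^2\quad\forall~h\in\affS. \]
Finally, for $h\in\affS$ we have $\Pi_{\affS}h=h$, so $\iprod{h}{\Pi_{\affS}\nabla^2 f(\bar x)\Pi_{\affS}h}=\iprod{h}{\nabla^2 f(\bar x)h}$ and the displayed inequality becomes exactly $\iprod{h}{[\nabla^2 f(\bar x)+Q]h}\geq\sigma\|h\|^2$ for all $h\in\affS$, which is \cref{ssosc}.

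I do not expect a genuine obstacle here, as this is essentially a one-line specialization of \cref{ssoscimcondgj}; the only points demanding a little care are the bijectivity and invariance of $\affS$ under $(I+\tau Q)^{-1}$ (needed both to identify $\cR(D)=\affS$ and to justify the change of variable $h\leftarrow(I+\tau Q)^{-1}h$) and the bookkeeping identities $\Pi_{\affS}Q\Pi_{\affS}=Q$ and $\Pi_{\affS}h=h$ for $h\in\affS$, all of which follow from $\cR(Q)\subseteq\affS$ and positive definiteness of $I+\tau Q$. For this reason the authors' remark that the verification can be "omitted" is justified, but I would still spell out the three displayed equivalences above so that the reader can check that the reverse implication really is symmetric to the forward one.
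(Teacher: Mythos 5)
Your proposal is correct and is precisely the argument the paper has in mind: it specializes the chain of equivalences from the proof of \cref{ssoscimcondgj} to the single matrix $D=\Pi_{\affS}(I+\tau Q)^{-1}\Pi_{\affS}\in\partial\proxs(\bar z)$ supplied by \cref{cond2_gj}, with $\cR(D)=\affS$ and $A=Q$, which is exactly the "mimicking" the authors say they omit. The supporting details you flag (invariance and bijectivity of $\affS$ under $(I+\tau Q)^{-1}$ and the identities $\Pi_{\affS}Q\Pi_{\affS}=Q$, $\Pi_{\affS}h=h$ on $\affS$) are all justified by $\cR(Q)\subseteq\affS$ and $I+\tau Q\succ 0$, so there is no gap.
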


\subsection{A Weaker Form of \texorpdfstring{\cref{cond1_gj}}{(2.4)}} We now consider a weaker variant of the condition \cref{cond1_gj} that only involves the Bouligand subdifferential $\partial_B\proxs(\bar z)$: %\vspace{.5ex}
\begin{equation}
    \label{weakcond1_gj} \tag{P.3}
      \begin{array}{l} \textit{for every $D\in\partial_B\proxs(\bar z)$, we have:} \\[1ex] \hspace{1ex}  D=\Pi_{\mathcal R(D)}(I+\tau A)^{-1}\Pi_{\mathcal R(D)}, \quad \cR(A)\subseteq \mathcal R(D) \subseteq \affS, \quad A \succeq \Pi_{\mathcal R(D)}Q\Pi_{\mathcal R(D)}. \end{array}
\end{equation} 
%
%\begin{mdframed}[style=highlighta]
%\begin{equation}
%    \label{weakcond1_gj} \tag{P.3}
%      \begin{array}{l} \textit{for every $D\in\partial_B\proxs(\bar z)$, we have: $D=\Pi_{\mathcal R(D)}(I+\tau A)^{-1}\Pi_{\mathcal R(D)}$,} \\[1ex]
%      \textit{$\cR(A)\subseteq \mathcal R(D) \subseteq \affS$, and $A \succeq \Pi_{\mathcal R(D)}Q\Pi_{\mathcal R(D)}$.} \end{array}
%\end{equation} 
%\end{mdframed}
%\vspace{1ex}
%
%Typically, verifying \cref{cond1_gj} would be easier if we only impose \cref{cond1_gj} on $\partial_B\proxs(\bar z)$. We would argue that \cref{cond1_gj} can be replaced by the weaker one: 
Our goal is to prove that \cref{weakcond1_gj} already implies the stronger condition \cref{cond1_gj}. We first present a result on the range of the sum of positive semidefinite matrices.  
\begin{prop}
    Let $\{A_i\}_{i=1,\dots,r}$ be a family of symmetric positive semidefinite matrices and let $a_i>0$, $i = 1,\dots,r$, be given. Then, $\cR(\sum_{i=1}^ra_iA_i)=\sum_{i=1}^r\cR(A_i)$ where the sum of subspaces is defined as $\sum_{i=1}^r S_i :=\{\sum_{i=1}^rx_i:  x_i\in S_i, i = 1,\dots,r\}$.  
\end{prop}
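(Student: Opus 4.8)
The statement to prove is that for symmetric positive semidefinite matrices $A_1,\dots,A_r$ and positive scalars $a_1,\dots,a_r$, one has $\cR(\sum_{i=1}^r a_i A_i) = \sum_{i=1}^r \cR(A_i)$. The plan is to first reduce to the two-matrix case $r=2$ and then argue by induction, since $\sum_{i=1}^{r} a_i A_i = a_r A_r + (\sum_{i=1}^{r-1} a_i A_i)$ and the latter sum is again symmetric positive semidefinite with range equal to $\sum_{i=1}^{r-1}\cR(A_i)$ by the inductive hypothesis. So the heart of the matter is: for $A, B \succeq 0$ and $a, b > 0$, show $\cR(aA + bB) = \cR(A) + \cR(B)$.

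For the two-matrix case, the inclusion $\cR(aA+bB) \subseteq \cR(A) + \cR(B)$ is immediate, since $(aA+bB)x = a(Ax) + b(Bx) \in \cR(A) + \cR(B)$ for every $x$. For the reverse inclusion, I would pass to orthogonal complements and kernels using the symmetry: for a symmetric matrix $M$, $\cR(M)^\perp = \ker(M)$, so it suffices to show $\ker(aA+bB) = \ker(A)\cap\ker(B)$ (then take orthogonal complements, using that $(\ker A \cap \ker B)^\perp = \cR(A) + \cR(B)$). The inclusion $\ker(A)\cap\ker(B)\subseteq\ker(aA+bB)$ is trivial. Conversely, if $(aA+bB)x = 0$, then $a\iprod{x}{Ax} + b\iprod{x}{Bx} = 0$; since $a,b>0$ and both quadratic forms are nonnegative (positive semidefiniteness), we get $\iprod{x}{Ax} = \iprod{x}{Bx} = 0$, and then the standard fact that a positive semidefinite matrix $A$ satisfies $\iprod{x}{Ax}=0 \iff Ax = 0$ (e.g.\ via the square root $A^{1/2}$, $\|A^{1/2}x\|^2 = \iprod{x}{Ax} = 0$) gives $Ax = Bx = 0$, i.e.\ $x \in \ker(A)\cap\ker(B)$.

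I do not anticipate a genuine obstacle here; the only point requiring slight care is the clean equivalence $\iprod{x}{Ax}=0 \Leftrightarrow Ax=0$ for $A\succeq 0$, which is where positive semidefiniteness (as opposed to merely symmetry) is essential — without it the claim is false, as $A = \begin{pmatrix} 0 & 1 \\ -1 & 0\end{pmatrix}$ or an indefinite symmetric example shows. Once that lemma-level fact is invoked, the induction closes routinely: assuming $\cR(\sum_{i=1}^{r-1} a_i A_i) = \sum_{i=1}^{r-1}\cR(A_i)$ and noting $B := \sum_{i=1}^{r-1} a_i A_i \succeq 0$, the two-matrix result applied to $a_r A_r$ and $B$ (with scalars $1$ and $1$, say, or absorbing $a_r$) yields $\cR(a_r A_r + B) = \cR(A_r) + \cR(B) = \cR(A_r) + \sum_{i=1}^{r-1}\cR(A_i) = \sum_{i=1}^r \cR(A_i)$, completing the proof. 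One should also dispatch the trivial base case $r=1$, where $\cR(a_1 A_1) = \cR(A_1)$ since $a_1 > 0$.
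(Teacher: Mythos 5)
Your proof is correct and follows essentially the same route as the paper: pass to kernels via $\cR(M)^\perp=\ker(M)$ for symmetric $M$, use positive semidefiniteness to get $\ker(\sum a_iA_i)=\bigcap\ker(A_i)$, and take orthogonal complements. The only difference is organizational — you prove the two-matrix case (spelling out the quadratic-form fact $\iprod{x}{Ax}=0\iff Ax=0$) and induct on $r$, whereas the paper handles general $r$ in one step and cites the complement identities; both are fine.
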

\begin{proof}
    We note that for a symmetric matrix $A$, it follows $\ker(A)=\cR(A)^{\perp}$ and for two positive semidefinite matrices $A$, $B$, we have $\ker(A) \cap \ker(B) = \ker(A+B)$. Therefore by \cite[Propositions 6.26 and 6.34]{bauschke2017convex}, it holds that
    \begin{align*}
        &\cR\left({\sum}_{i=1}^ra_iA_i\right)=\ker\left({\sum}_{i=1}^ra_iA_i\right)^\perp =\left({\bigcap}_{i=1}^r\ker(A_i)\right)^\perp = {\sum}_{i=1}^r\cR(A_i). 
    \end{align*}
\end{proof}

Next, based on the representation $D = \Pi_{\cR(D)}(I+\tau A)^{-1}\Pi_{\cR(D)}$ in \cref{weakcond1_gj}, we discuss the order of positive semidefinite matrices under projections.
\begin{lem}
    \label{lemma2-16}
    Let $L \subseteq \Rn$ be a linear subspace of $\R^n$ and let $B\in \bS^n$ be given with $I+B\succ 0$. Then, it holds that $\Pi_{L} (I+\Pi_{L}B\Pi_L)^{-1}\Pi_{L}\preceq (I+B)^{-1}$.              
\end{lem}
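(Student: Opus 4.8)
The plan is to reduce the inequality $\Pi_L(I+\Pi_L B\Pi_L)^{-1}\Pi_L \preceq (I+B)^{-1}$ to a statement about Schur complements by decomposing $\R^n = L \oplus L^\perp$ and writing everything in block form relative to this orthogonal decomposition. Write $P := \Pi_L$ and $P^\perp := I - P$. In block coordinates adapted to $L \oplus L^\perp$, we have
\[
I + B = \begin{pmatrix} I + B_{11} & B_{12} \\ B_{12}^\top & I + B_{22} \end{pmatrix},
\]
where $B_{11} = P B P|_L$, etc., and the assumption $I + B \succ 0$ guarantees both diagonal blocks are positive definite and all the relevant inverses and Schur complements exist. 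In the same coordinates, $P(I + PBP)P$ restricted to $L$ is simply $I + B_{11}$, so the left-hand side of the claimed inequality is the block matrix $\mathrm{diag}((I+B_{11})^{-1}, 0)$ (acting as zero on $L^\perp$).

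The key step is then the following: the claim $\mathrm{diag}((I+B_{11})^{-1},0) \preceq (I+B)^{-1}$ is, after taking the $(1,1)$-block of the inverse via the Schur complement formula, exactly the statement that
\[
(I+B_{11})^{-1} \preceq \bigl[(I+B_{11}) - B_{12}(I+B_{22})^{-1}B_{12}^\top\bigr]^{-1},
\]
together with the automatic fact that the off-diagonal and $L^\perp$-blocks of $(I+B)^{-1}$ are compatible with the LHS being zero there (this uses that for a positive semidefinite matrix $M$, $M \succeq \mathrm{diag}(M_{11},0)$ iff $M_{11} - M_{12}M_{22}^{-1}M_{12}^\top \succeq$ something — more cleanly, one shows $(I+B)^{-1} - \mathrm{diag}((I+B_{11})^{-1},0) \succeq 0$ directly by exhibiting it as a congruence transform of a positive semidefinite block matrix). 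The displayed inequality above holds because $(I+B_{11}) - B_{12}(I+B_{22})^{-1}B_{12}^\top \preceq I + B_{11}$ (we are subtracting a positive semidefinite matrix, since $I + B_{22} \succ 0$), and inverting reverses the order on positive definite matrices.

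To make the "compatibility off the $(1,1)$-block" rigorous without case analysis, the cleanest route I would take is: let $S := (I+B_{22}) - B_{12}^\top(I+B_{11})^{-1}B_{12} \succ 0$ be the Schur complement of the $(1,1)$ block, and use the standard factorization
\[
(I+B)^{-1} = \begin{pmatrix} I & -(I+B_{11})^{-1}B_{12} \\ 0 & I \end{pmatrix} \begin{pmatrix} (I+B_{11})^{-1} & 0 \\ 0 & S^{-1} \end{pmatrix} \begin{pmatrix} I & 0 \\ -B_{12}^\top(I+B_{11})^{-1} & I \end{pmatrix}.
\]
Expanding the $(1,1)$ block gives $(I+B)^{-1}_{11} = (I+B_{11})^{-1} + (I+B_{11})^{-1}B_{12}S^{-1}B_{12}^\top(I+B_{11})^{-1} \succeq (I+B_{11})^{-1}$, and the full difference $(I+B)^{-1} - \mathrm{diag}((I+B_{11})^{-1},0)$ equals
\[
\begin{pmatrix} (I+B_{11})^{-1}B_{12} \\ -I \end{pmatrix} S^{-1} \begin{pmatrix} (I+B_{11})^{-1}B_{12} \\ -I \end{pmatrix}^\top \succeq 0,
\]
which is exactly the desired inequality. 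I expect the main obstacle to be purely bookkeeping: carefully justifying that all inverses exist (which follows from $I+B \succ 0$ and the fact that principal submatrices and Schur complements of positive definite matrices are positive definite) and confirming that $P(I+PBP)^{-1}P$, interpreted as acting on all of $\R^n$, really does coincide with the block matrix $\mathrm{diag}((I+B_{11})^{-1},0)$ — here one must note that $(I + PBP)$ is only invertible on $L$ (it restricts to the identity on $L^\perp$), so "$(I+\Pi_LB\Pi_L)^{-1}$" should be read as the inverse within $L$, consistently with how the lemma is applied in \cref{weakcond1_gj}. No genuinely hard inequality is involved once the block structure is set up.
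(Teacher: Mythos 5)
Your proposal is correct and follows essentially the same route as the paper: decompose along $L\oplus L^\perp$, apply the block (Schur-complement) inverse formula to $I+B$, identify $\Pi_L(I+\Pi_LB\Pi_L)^{-1}\Pi_L$ with $\mathrm{diag}\bigl((I+B_{11})^{-1},0\bigr)$, and observe that the difference is positive semidefinite — the paper verifies this by writing the quadratic form as a complete square, which is just the unfactored version of your congruence expression with the same Schur complement. One tiny correction: $I+\Pi_LB\Pi_L$ is invertible on all of $\R^n$ (it acts as the identity on $L^\perp$ and as $I+B_{11}\succ 0$ on $L$), so no "inverse within $L$" convention is needed, though this does not affect your argument.
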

\begin{proof}
    Without loss of generality, let us assume $L=\spa\{e_1,\dots,e_r\}$ and let us write $B$ in block form:
    \[ B=\begin{pmatrix}
       B_1 & B_2 \\
       B_2^\top & B_3 
    \end{pmatrix}, \quad B_1\in\R^{r\times r}, \quad B_2\in\R^{r\times n-r},\quad B_3\in \R^{n-r\times n-r}.         \]
    By the block inverse formula \cite[Equation (0.8.5.6)]{horn2012matrix} and using the Schur complement $C=(I+B_3)-B_2^\top(I+B_1)^{-1}B_2$ for $I+B$, we see that:
 {\small
        \begin{align*}          
            & (I+B)^{-1}=\begin{pmatrix}
                (I+B_1)^{-1}+(I+B_1)^{-1}B_2C^{-1}B_2^\top(I+B_1)^{-1}  &  -(I+B_1)^{-1}B_2C^{-1}  \\
                -C^{-1}B_2^\top (I+B_1)^{-1}  & C^{-1}
             \end{pmatrix}, \\ & \Pi_{L} (I+\Pi_{L}B\Pi_L)^{-1}\Pi_{L}=\begin{pmatrix}
                 (I+B_1)^{-1} & 0 \\
                 0 & 0
             \end{pmatrix}.   
     \end{align*}}
    
    \noindent Notice that $C^{-1}$ is positive definite as principle submatrix of the positive definite matrix $(I+B)^{-1}$. Moreover, we have
    \begin{align*}
        &(I+B)^{-1}-\Pi_{L} (I+\Pi_{L}B\Pi_L)^{-1}\Pi_{L}
        \\ & \hspace{4ex}  =\begin{pmatrix}
            (I+B_1)^{-1}B_2C^{-1}B_2^\top(I+B_1)^{-1} & -(I+B_1)^{-1}B_2C^{-1} 
            \\-C^{-1}B_2^\top (I+B_1)^{-1}  & C^{-1}
        \end{pmatrix}=: E.  
    \end{align*}
    To conclude, it suffices to show that $E$ is positive semidefinite. But taking arbitrary $h=(h_1,h_2)\in\R^n$, it follows $h^\top Eh= \|C^{-\frac12}B_2^\top(I+B_1)^{-1}h_1-C^{-\frac12}h_2\|^2\geq 0$.   
\end{proof}
 
% \begin{lemma}
%     \label{lemma3-4}
%     Suppose matrix $P\in\R^{n\times n}$ satisfies that $0\preceq P\preceq I$. Then $P$ can be written as $\Pi_{\cS}(I+A)^{-1}\Pi_{\cS}$ with $\cR(A)\subset \cS$ for a symmetric and positive semidefinite matrix $A$ and $\cS=\cR(P)$.
% \end{lemma}
% \begin{proof}
%     There is no loss of generality if we assume $\cR(P)=\spa\{e_1,...,e_r\}$, where $e_i$'s are the standard Euclidean basis. Then we can write:
%     \[   P=\begin{pmatrix}
%         P_1 & 0 \\
%         0 & 0
%     \end{pmatrix},      \]
%     where $P_1$ is symmetric positive definite. By assumption we see that $0< \lambda_i(P_1)\leq 1$ for each eigenvalue of $P_1$. Set $A_1=P_1^{-1}-I$, then $A_1$ is positive semidefinite. It suffices now to take $A=\begin{pmatrix}
%         A_1 & 0 \\
%         0 & 0
%     \end{pmatrix}$.
% \end{proof} 
\begin{prop}
    \label{prop3-5}
    Let $B \in \mathbb S^n$ with $I+\tau B \succ 0$ be given and let $\{S_i\}_{i=1,\dots,r}$ be a family of linear subspaces of $\Rn$. For $i=1,\dots,r $ consider $P_i=\Pi_{S_i}(I+\tau A_i)^{-1}\Pi_{S_i}$ with $A_i \in \mathbb S^n$, $\cR(A_i)\subseteq S_i$, and $A_i\succeq \Pi_{S_i} B\Pi_{S_i}$ and set $P=\sum_{i=1}^ra_iP_i$ where $a_i\geq 0$ and $\sum_{i=1}^ra_i=1$. Then, the matrix $P$ can be written as $P=\Pi_{\cR(P)}(I+\tau A)^{-1}\Pi_{\cR(P)}$ with $\cR(A)\subseteq \cR(P)$ and $A\succeq\Pi_{\cR(P)} B\Pi_{\cR(P)}$. In particular, \cref{weakcond1_gj} implies \cref{cond1_gj}.
\end{prop}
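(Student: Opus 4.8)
The plan is to build the matrix $A$ explicitly from $P$, confirm the claimed representation $P=\Pi_{\cR(P)}(I+\tau A)^{-1}\Pi_{\cR(P)}$, and then prove the order relation $A\succeq \Pi_{\cR(P)}B\Pi_{\cR(P)}$; this last point carries the real work. First, after discarding the indices with $a_i=0$ and relabeling, I may assume $a_i>0$ for all $i$. Since $\cR(A_i)\subseteq S_i$, each $I+\tau A_i$ acts as the identity on $S_i^{\perp}$ and as an invertible map on $S_i$; combining $A_i\succeq\Pi_{S_i}B\Pi_{S_i}$ with $I+\tau B\succ 0$ (which forces $I+\tau\Pi_{S_i}B\Pi_{S_i}\succ 0$) shows $I+\tau A_i\succ 0$, so $(I+\tau A_i)^{-1}$ restricts to a bijection of $S_i$, whence $P_i\succeq 0$ and $\cR(P_i)=S_i$. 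By the preceding proposition on ranges of sums of positive semidefinite matrices, $\cR(P)=\sum_i S_i=:S$. As $P\succeq 0$ with $\cR(P)=S$, the matrix $P+\Pi_{S^{\perp}}$ is positive definite, and I set $A:=\tfrac1\tau\bigl(\Pi_S(P+\Pi_{S^{\perp}})^{-1}\Pi_S-\Pi_S\bigr)$. Exploiting the block structure relative to $\Rn=S\oplus S^{\perp}$, one checks directly that $\cR(A)\subseteq S$, that $I+\tau A=\Pi_{S^{\perp}}+\Pi_S(P+\Pi_{S^{\perp}})^{-1}\Pi_S\succ 0$ with inverse $\Pi_{S^{\perp}}+P$, and hence $\Pi_S(I+\tau A)^{-1}\Pi_S=P$, i.e., the desired representation with $\cR(A)\subseteq\cR(P)$.

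It remains to establish $A\succeq\Pi_S B\Pi_S$. Since $A$ and $\Pi_S B\Pi_S$ both vanish on $S^{\perp}$ and map into $S$, and $(I+\tau A)|_S=(P|_S)^{-1}$, this inequality is equivalent—after restricting to $S$ and inverting—to the majorization $P|_S\preceq\bigl(I+\tau[B]_S\bigr)^{-1}$ on $S$, where $[B]_S=\Pi_S B\Pi_S$ viewed as an operator on $S$ and $I+\tau[B]_S\succ 0$ (again from $I+\tau B\succ 0$). I would prove this by bounding each summand: from $A_i\succeq\Pi_{S_i}B\Pi_{S_i}$ and positive definiteness one gets $(I+\tau A_i)^{-1}\preceq(I+\tau\Pi_{S_i}B\Pi_{S_i})^{-1}$, hence $P_i\preceq\Pi_{S_i}(I+\tau\Pi_{S_i}B\Pi_{S_i})^{-1}\Pi_{S_i}$; then, regarding $S_i$ as a subspace of $S$ and applying \cref{lemma2-16} \emph{inside} $S$ to the matrix $\tau[B]_S$ (valid because $\Pi_{S_i}[B]_S\Pi_{S_i}=\Pi_{S_i}B\Pi_{S_i}$ when $S_i\subseteq S$) yields $P_i|_S\preceq(I+\tau[B]_S)^{-1}$. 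Taking the convex combination $\sum_i a_i(\cdot)$ and using $\sum_i a_i=1$ gives $P|_S\preceq(I+\tau[B]_S)^{-1}$, completing the argument.

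The step I expect to be the main obstacle is exactly this order relation: one has to pass to the subspace $S=\cR(P)$ \emph{before} invoking \cref{lemma2-16}, since the cruder bound $P_i\preceq(I+\tau B)^{-1}$ on all of $\Rn$, once compressed to $S$, produces the compression of the inverse rather than the inverse of the compression and is too weak to conclude. Finally, the ``in particular'' claim is immediate: given $D\in\partial\proxs(\bar z)=\conv(\partial_B\proxs(\bar z))$, write $D=\sum_i a_i D_i$ with $D_i\in\partial_B\proxs(\bar z)$, $a_i\ge 0$, $\sum_i a_i=1$; apply the proposition with $B:=Q$ (using that $I+\tau Q\succ 0$), $S_i:=\cR(D_i)\subseteq\affS$, and $P_i:=D_i$, and note $\cR(D)=\sum_i\cR(D_i)\subseteq\affS$. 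This is precisely \cref{cond1_gj}.
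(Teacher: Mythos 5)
Your proof is correct and follows essentially the same route as the paper: the heart of both arguments is to compress $B$ onto $S=\sum_i S_i$, bound each $P_i$ by the inverse of the compressed matrix via \cref{lemma2-16} together with anti-monotonicity of the inverse, take the convex combination, and then invert back to get $A\succeq\Pi_{\cR(P)}B\Pi_{\cR(P)}$. The only (cosmetic) differences are that you treat all $r$ summands at once where the paper runs an induction reducing to the case $r=2$, and you construct $A$ explicitly instead of invoking \cref{lemma5-19}.
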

\begin{proof}
    We first note that by combining $A_i\succeq \Pi_{S_i} B\Pi_{S_i} $ and $I+\tau B \succ 0$, we can infer that each $I+\tau A_i$ is positive definite. We prove \cref{prop3-5} by induction. The base case $r=1$ is trivial. Let us now consider $r=2$. In the case $a_1=0$ or $a_2=0$ nothing needs to be shown, so we can assume $a_1,a_2>0$. We then have $\cR(P)=S_1+S_2$ and setting $\tilde B :=\Pi_{S_1+S_2}B\Pi_{S_1+S_2}$, it follows:
    \[  A_1\succeq \Pi_{S_1}\tilde B\Pi_{S_1} \quad \text{and} \quad A_2\succeq \Pi_{S_2}\tilde B\Pi_{S_2}.    \] 
    Utilizing \cref{lemma2-16}, we obtain 
    \[  \Pi_{S_1}(I+\tau A_1)^{-1}\Pi_{S_1}\preceq \Pi_{S_1}(I+\tau \Pi_{S_1}\tilde B\Pi_{S_1})^{-1}\Pi_{S_1} \preceq (I+\tau \tilde B)^{-1}                 \]
    and $ \Pi_{S_2}(I+\tau A_2)^{-1}\Pi_{S_2}\preceq (I+\tau \tilde B)^{-1}$ which implies $P\preceq (I+\tau \tilde B)^{-1}$. By \cref{lemma5-19}, we can write $P$ as $P=\Pi_{S_1+S_2}(I+\tau A)^{-1}\Pi_{S_1+S_2}$ for some $A$ with $\cR(A) \subseteq \cR(P) = S_1+S_2$. This yields $\Pi_{S_1+S_2}(I+\tau A)^{-1}\Pi_{S_1+S_2}\preceq (I+\tau \tilde B)^{-1}$. Since both $\cR(A)$ and $\cR(\tilde B)$ are subsets of $S_1+S_2$, this condition is true if and only if\footnote{\revise{Let us set $L=S_1+S_2$. If $\cR(A),\cR(\tilde B)\subseteq L$, then we have $(I+\tau A)^{-1}=\Pi_L(I+\tau A)^{-1}\Pi_{L}+\Pi_{L^\perp}(I+\tau A)^{-1}\Pi_{L^\perp}=\Pi_L(I+\tau A)^{-1}\Pi_{L}+\Pi_{L^\perp}$ and $(I+\tau \tilde B)^{-1}$ can be expressed in a similar way. (This can be formally shown, e.g., by following the block decomposition used in the proof of \cref{lemma2-16}). Therefore, $ (I+\tau A)^{-1}\preceq (I+\tau \tilde B)^{-1}$ if and only if $ \Pi_L(I+\tau A)^{-1}\Pi_L\preceq \Pi_L(I+\tau \tilde B)^{-1}\Pi_L$, which is further equivalent to  $\Pi_L(I+\tau A)^{-1}\Pi_L\preceq \Pi_L(I+\tau \tilde B)^{-1}\Pi_L+\Pi_{L^\perp}=(I+\tau\tilde B)^{-1}$.}}:
    \[        (I+\tau A)^{-1}\preceq (I+\tau \tilde B)^{-1} \quad \iff \quad A\succeq \tilde B,    \]
    which proves the case $r=2$. Now suppose the conclusion is true for $1,\dots,r-1$. There is no loss of generality, if we assume $a_i>0$ for all $i$. Let us set $\tilde P=\frac{1}{1-a_r}\sum_{i=1}^{r-1}a_iP_i$. Using induction for $\tilde P$ and applying the last steps for $P=(1-a_r)\tilde P+a_rP_r$, we can conclude the first part of the proof. The remaining assertion in \cref{prop3-5} follows from $\partial \proxs(\bar z)=\conv(\partial_B\proxs(\bar z))$ and the Carath{\'e}odory theorem, cf. \cite{rockafellar2009variational}. % Theorem).\cite[Theorem 2.29]{rockafellar2009variational} 
\end{proof}

\subsection{Second-order Variational Formulations for \texorpdfstring{\cref{cond2_gj}}{(2.5)} and \texorpdfstring{\cref{weakcond1_gj}}{(2.6)}} 

Due to the close relationship between the generalized Jacobian of the proximity operator $\proxs$ and the second subderivative of $\vp$,
%~\cite[Lemma 3.13]{ouyang2023partI}, 
we now plan to convert \cref{cond2_gj} and \cref{weakcond1_gj} into corresponding second-order variational conditions. The main tool is \cite[Lemma 3.13]{ouyang2023partI}, which allows connecting the limit of \revise{$\D\proxs$} and the epi-limit of the second subderivative of $\vp$ with each other.
\begin{prop} \label{proposition:v1-p3}
    Let \ref{A1} and \ref{A2} be satisfied. Assume further that for all $\gph(\partial\vp) \ni (x^k,v^k)\to(\bar x,\bar v)$ such that $\vp$ is twice epi-differentiable at $x^k$ for $v^k$ and $\rd^2\vp(x^k|v^k)$ is generalized quadratic, it holds that:
    \begin{align}
        \label{cond_sosub1} \tag{V.1}
        ({\eliminf}_{k\to\infty}\,\rd^2\vp(x^k|v^k))(w) \geq \langle w, Qw\rangle+\iota_{\aff(S)}(w) \quad \forall~w \in \Rn.
    \end{align}
    Then, condition \cref{weakcond1_gj} is satisfied.
\end{prop}
\begin{proof}
    Let $D\in\partial_B\proxs(\bar z)$ be arbitrary. Due to \cite[Proposition 2.3]{ouyang2023partI}, we have $0 \preceq D \preceq \frac{1}{1-\tau\rho}I$ and applying \cref{lemma5-19}, it holds that $D=\Pi_{\cR(D)}(I+\tau A)^{-1}\Pi_{\cR(D)}$ with $A \in \mathbb S^n$, $A+\rho I\succeq 0$, and $\cR(A)\subseteq \cR(D)$. By definition, there is $z^k\to \bar z$ such that $\proxs$ is differentiable at $z^k$ and $\revise{\D\proxs(z^k)}\to D$. Let us define $x^k := \proxs(z^k)$ and $v^k := \frac{1}{\tau}(z^k-x^k)$. Then, it follows $z^k=x^k+\tau v^k$ and $v^k\in\partial \vp(x^k)$. Applying \cite[Lemma 3.13]{ouyang2023partI}, the condition $\revise{\D\proxs(z^k)}\to D$ implies that $\vp$ is twice epi-differentiable at $x^k$ for $v^k$, $\rd^2\vp(x^k|v^k)$ is generalized quadratic (for all $k$), and we have 
    \[  ({\elim}_{k\to\infty}\,\rd^2\vp(x^k|v^k))(w)= \langle w,Aw\rangle+\iota_{\cR(D)}(w) \quad  \forall~ w\in \R^n.                          \]
    By assumption, we know that:
    \[  \langle w,Aw\rangle+\iota_{\cR(D)}(w)\geq \langle w,Qw \rangle+\iota_{\aff(S)}(w) \quad \forall~w\in\R^n.             \]
    This yields $ \Pi_{\cR(D)}Q\Pi_{\cR(D)} \preceq A$ and $\cR(A)\subseteq \cR(D)\subseteq \aff(S)$ which proves \cref{weakcond1_gj}.
\end{proof}

\begin{prop}
    \label{prop2-10}
    Let \ref{A1} and \ref{A2} hold. Assume further that there exists a sequence $\gph(\partial\vp)\ni(x^k,v^k)\to(\bar x,\bar v)$ such that $\vp$ is twice epi-differentiable at $x^k$ for $v^k$, $\rd^2\vp(x^k|v^k)$ is generalized quadratic, and we have
    \begin{align}
        \label{cond_sosub2} \tag{V.2}
        ({\elim}_{k\to\infty}\,\rd^2\vp(x^k|v^k))(w)=\langle w, Qw\rangle+\iota_{\aff(S)}(w) \quad \forall~w \in \Rn.
    \end{align}
    Then, condition \cref{cond2_gj} is satisfied.
\end{prop}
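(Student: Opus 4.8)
The plan is to exhibit a sequence $z^k \to \bar z$ at which $\proxs$ is differentiable and whose Jacobians converge precisely to $\Pi_{\affS}(I+\tau Q)^{-1}\Pi_{\affS}$; once this is done, the definition of the Bouligand subdifferential immediately yields $\Pi_{\affS}(I+\tau Q)^{-1}\Pi_{\affS}\in\partial_B\proxs(\bar z)\subseteq\partial\proxs(\bar z)$, which is exactly \cref{cond2_gj}. The natural candidate is the sequence furnished by the hypothesis: take $(x^k,v^k)\to(\bar x,\bar v)$ in $\gph(\partial\vp)$ with $\vp$ twice epi-differentiable at $x^k$ for $v^k$, $\rd^2\vp(x^k|v^k)$ generalized quadratic, and $\elim_{k\to\infty}\rd^2\vp(x^k|v^k)=\iprod{\cdot}{Q\cdot}+\iota_{\affS}$, and set $z^k:=x^k+\tau v^k$. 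Then $z^k\to\bar z$; moreover, since $v^k\in\partial\vp(x^k)$ and $x^k\to\bar x=\proxs(\bar z)$, assumption \ref{A1} (single-valuedness of $\proxs$ near $\bar z$, cf. \Cref{sec:notation}) forces $x^k=\proxs(z^k)$ and $v^k=\tfrac1\tau(z^k-x^k)$. Because $\rd^2\vp(x^k|v^k)$ is generalized quadratic, \cite[Lemma 3.13]{ouyang2023partI} ensures that $\proxs$ is differentiable at $z^k$.

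Next I would identify the limit of $D\proxs(z^k)$. By \cite[Proposition 2.3]{ouyang2023partI} and \ref{A1} we have $0\preceq D\proxs(z^k)\preceq\tfrac{1}{1-\tau\rho}I$, so $\{D\proxs(z^k)\}_k$ is bounded and has convergent subsequences. Fix one, say $D\proxs(z^{k_j})\to W$. Writing $W=\Pi_{\cR(W)}(I+\tau A)^{-1}\Pi_{\cR(W)}$ with $\cR(A)\subseteq\cR(W)$ via \cref{lemma5-19}, and applying \cite[Lemma 3.13]{ouyang2023partI} in the same direction as in the proof of the preceding proposition, one gets that $\rd^2\vp(x^{k_j}|v^{k_j})$ epi-converges to $\iprod{\cdot}{A\cdot}+\iota_{\cR(W)}$. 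Since epi-limits are unique and, by hypothesis, $\rd^2\vp(x^k|v^k)$ epi-converges to $\iprod{\cdot}{Q\cdot}+\iota_{\affS}$, comparing the two forces $\cR(W)=\affS$ and, because $\cR(A)\subseteq\cR(W)=\affS$ and $\cR(Q)\subseteq\affS$ by \ref{A2}, also $A=Q$. Hence $W=\Pi_{\affS}(I+\tau Q)^{-1}\Pi_{\affS}$ — this matrix is well defined because $I+\tau Q\succ0$, as recorded after \cref{lemma5-19}. As every convergent subsequence of the bounded sequence $\{D\proxs(z^k)\}_k$ shares this limit, the whole sequence converges, $D\proxs(z^k)\to\Pi_{\affS}(I+\tau Q)^{-1}\Pi_{\affS}$, and with $\proxs$ differentiable at $z^k\to\bar z$ the definition of $\partial_B\proxs(\bar z)$ closes the argument.

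The hard part is the passage from epi-convergence of the second subderivatives to convergence of the proximal Jacobians with the correct limit; everything there rests on the two-way dictionary of \cite[Lemma 3.13]{ouyang2023partI} between generalized-quadratic second subderivatives of $\vp$ and differentiability/Jacobians of $\proxs$, and the subsequence extraction is needed because a priori only boundedness — not convergence — of $\{D\proxs(z^k)\}_k$ is available. If \cite[Lemma 3.13]{ouyang2023partI} is stated as a genuine equivalence, which it appears to be given its use in the preceding proof, then this middle portion collapses to a direct invocation in the reverse direction and the only remaining work is the routine identification $\cR(W)=\affS$, $A=Q$ from uniqueness of epi-limits together with the boundedness argument above.
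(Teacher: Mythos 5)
Your argument is correct and is essentially the paper's proof: the paper disposes of \cref{prop2-10} by citing \cite[Lemma 3.13]{ouyang2023partI} directly, and your write-up is just the natural unpacking of that citation (differentiability of $\proxs$ at $z^k=x^k+\tau v^k$, boundedness of $D\proxs(z^k)$, identification of every subsequential limit with $\Pi_{\affS}(I+\tau Q)^{-1}\Pi_{\affS}$ via uniqueness of epi-limits, hence membership in $\partial_B\proxs(\bar z)\subseteq\partial\proxs(\bar z)$). No genuinely different route or gap to report.
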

\begin{proof}
     This is a direct consequence of \cite[Lemma 3.13]{ouyang2023partI}.
\end{proof}

%------------------------------------------------------------------------------------------
% EQUIVALENCE
%------------------------------------------------------------------------------------------

\section{Second-order Variational Properties of \texorpdfstring{$C^2$}{C2}-Strictly Decomposable Functions} 
\label{sec:eq_ssonc}
In this section, we introduce the concept of $C^2$-strict decomposability. Strictly decomposable functions satisfy condition \ref{A2} and we show that  \cref{cond_sosub1} and \cref{cond_sosub2} will hold under additional mild geometric assumptions.

%We first introduce the definition of decomposable functions proposed by \cite{shapiro2003class}.
\begin{defn}[Decomposable functions] A function $ \varphi: \mathbb{R}^{n} \rightarrow \Rex $ is called $ C^{\ell} $-decomposable, $ \ell \in \mathbb{N} $, at a point $ \bar{x} \in \dom(\varphi)$, if there is an open neighborhood $ U $ of $ \bar{x} $ such that
    $$
    \varphi(x)=\varphi(\bar{x})+\varphi_{d}(F(x)) \quad \forall~x \in U,
    $$
    and the functions $ \varphi_{d} $ and $ F $ satisfy:
  \begin{enumerate}[label=\textup{\textrm{(\roman*)}},topsep=0pt,itemsep=0ex,partopsep=0ex]
      \item $ F: \mathbb{R}^{n} \rightarrow \mathbb{R}^{m} $ is $\ell$-times continuously differentiable on $ U $ and $ F(\bar x)=0 $.
      \item The mapping $ \varphi_{d}: \mathbb{R}^{m} \rightarrow \Rex $ is convex, lsc, positively homogeneous, and proper, or equivalently, $\vp_d=\sigma_{\pvpd}$ for some closed convex set $\pvpd$.  
      \item Robinson's constraint qualification holds at $ \bar{x} $ :
    $$
    0 \in \operatorname{int}\left\{F(\bar{x})+\revise{\D F(\bar{x})} \mathbb{R}^{n}-\dom(\varphi_{d})\right\}=\operatorname{int}\left\{\revise{\D F(\bar{x})} \mathbb{R}^{n}-\dom(\varphi_{d})\right\} .
    $$
  \end{enumerate}
  We say that $ \varphi $ is $ C^{\ell} $-strictly decomposable at $ \bar{x} $ for $\bar\lambda$ if $ \varphi $ is $ C^{\ell} $-decomposable at $ \bar{x} $ and if, in addition, the strict condition
  \be \label{eq:cq-strict} \revise{\D F(\bar{x})} \mathbb{R}^{n}-N_{\pvpd}(\bar\lambda)=\mathbb{R}^{m} \ee
  is satisfied at $ \bar{x} $ for some $ \bar\lambda \in \pvpd $. We say that $ \varphi $ is $ C^{\ell} $-fully decomposable at $ \bar{x} $ if the strict condition is replaced by the nondegeneracy condition $\revise{\D F(\bar{x})} \mathbb{R}^{n}+\operatorname{lin}(N_{\pvpd}(\bar\lambda))=\mathbb{R}^{m}$.
  %Notice that the definition for full decomposability is independent of the choice of $\bar\lambda$.
\end{defn}

The concept of decomposability was first proposed by Shapiro \cite{shapiro2003class} and serves as a functional analogue to cone reducible sets. Examples of $C^2$-strictly decomposable functions include, e.g., polyhedral and piecewise-linear functions, the total variation and the group-lasso norm, and the Ky-Fan $k$-norm. For more examples and background, we refer to \cite{shapiro2003class,milzarek2016numerical,ouyang2023partI}.

By \cite[Proposition 3.7]{ouyang2023partI}, if $\vp$ is $C^2$-strictly decomposable at $\bar x$ for $\bar \lambda \in \pvpd$, then $\vp$ is properly twice epi-differentiable at $\bar x$ for $\bar v=\revise{\D F(\bar x)^\top}\bar \lambda \in \partial\vp(\bar x)$ with 
\be \label{eq:sub-strict}    \rd^2\vp(\bar x|\bar v)(w)=\langle \bar\lambda, \revise{\revise{\D^2F(\bar x)}}[w,w] \rangle+\iota_{\revise{\D F(\bar x)^{-1}}N_{\pvpd}(\bar\lambda)}(w) \quad \forall~w \in \Rn.           \ee
Here, $\bar\lambda\in\pvpd$ is uniquely determined by $\bar v \in \partial\vp(\bar x)$ thanks to \cite[Proposition 3.6]{ouyang2023partI}. Defining $H(x,\lambda) := \sum_{i=1}^m \lambda_i \nabla^2 F_i(x)$, assumption \ref{A2} is then clearly satisfied with $S=\revise{\D F(\bar x)^{-1}}N_{\pvpd}(\bar\lambda)$ and $Q=\Pi_{\affS} H(\bar x,\bar\lambda)\Pi_{\affS}$.  
In the following, we verify that the conditions \cref{cond_sosub1} and \cref{cond_sosub2} hold for $C^2$-strictly decomposable functions. As in \cite{ouyang2023partI} and as mentioned, this requires an additional geometric assumption on the support set $\pvpd$. %Compared 
%these two conditions can not be expected to hold without further assumption on the support set $\pvpd$. 
%However, in \cite[?]{ouyang2023partI}, where to obtain the strict second subderivative one needs to control the second-order difference quotients $\Delta_t^2\vp(x|v)$ for every $(x,v)\in \gph\partial\vp$ that is near $(\bar x,\bar v)$, we only need to consider those pair where $\vp$ is twice epidifferentiable as in \cref{cond1_gj} and \cref{cond_sosub1}. This allows us to replace the $\usotp$ by a weaker one. 

\revise{Before doing so, we provide an example for a function that is $C^2$-strictly but not $C^2$-fully decomposable---underlining the generality and relevance of strict decomposability. 
\begin{example} \label{example:strict-decomp-not-full}
%Here we present the first example that is $C^2$-strictly decomposable at $0$ for some $\lambda\in C$ but is not $C^2$-fully decomposable at $0$. 
We consider the univariate function $\vp:\R\to\R$ defined as $\vp(x):=\max\{0,x,x+x^5\sin(1/x)\}$, where the term $x^5\sin(1/x)$ is treated as $0$ when $x=0$. We set $C:=\conv\{(0,0)^\top, (0,1)^\top, (1,0)^\top\} \subseteq \R^2$ and $F:\R\to \R^2$, $F(x) := (x,x+x^5\sin(1/x))^\top$.
Then, using \cite[Theorem 3.3.2]{HirLem01}, it follows $\sigma_C(x,y) = \max\{0,x,y\}$ and we can infer $\vp=\sigma_C\circ F$.
%
%we know $\revise{\D F(0)}=(1,1)^\top$ and $f=\sigma_C\circ F$. 
%
Moreover, due to $\revise{\D F(0)}=(1,1)^\top$, we have
%We claim that $f$ is $C^2$-strictly decomposable at $0$ for $0\in C$. By definition, we only need to verify:
%
\[    \D F(0)\R-N_C(0)=\begin{pmatrix}
    1 \\
    1
\end{pmatrix}\R-\R^2_-=\R^2.       \] 
This shows that $\vp$ is $C^2$-strictly decomposable at $0$ for $0\in C$.
Next, we verify that $\vp$ is not $C^2$-fully decomposable at $0$. Our proof builds on the following observation: 

\begin{enumerate}[label=\textup{$\bullet$},topsep=4pt,itemsep=0ex,partopsep=0ex]
\item Let $\vp:\R\to\R$ be $C^2$-fully decomposable at $0$. Then, $\vp$ can be represented as $\max\{\vp_1,\vp_2\}$ locally around $0$ with $\vp_1,\vp_2$ being $C^2(\R)$.
\end{enumerate}
This result is of somewhat independent interest and its derivation is deferred to \cref{app:example}. We now verify that $\vp$ cannot be represented in the form $\max\{\vp_1,\vp_2\}$ locally around $0$ with $\vp_1,\vp_2\in C^2(\R)$. Suppose---in order to reach a contradiction---that such a representation exists. Noticing $\vp^\prime_+(0) = \lim_{t \downarrow 0} \vp(t)/t = 1$ and $\vp^\prime_-(0)= \lim_{t \uparrow 0} \vp(t)/t = 0$, we may assume without loss of generality that $\vp_1'(0)=1$ and $\vp_2'(0)=0$. Therefore, locally around $0$, it follows $\vp = \vp_1$ when $x>0$ and $\vp = \vp_2$ when $x<0$ (by considering the Taylor expansions of $\vp_1$ and $\vp_2$ around $0$). In particular, $\vp$ has to be $C^2$ on $(0,\epsilon)$ for some $\epsilon>0$. However, using a direct calculation, we have $\vp(x)=x+\max\{0,x^5\sin(1/x)\}$ on $(0,1)$, which is not $C^2$ on any $(0,\epsilon)$, $\epsilon>0$. This yields a contradiction and proves that $\vp$ is not $C^2$-fully decomposable at $0$.
\end{example}
}

\subsection{Verifying \texorpdfstring{\cref{cond_sosub1}}{(2.7)} and the \texorpdfstring{$\sotp$}{OTP}-Property}
%We first introduce the weaker form of $\usotp$ in \cite{ouyang2023partI}.
\begin{defn}
    \label{def-sotp}
    We say that a closed convex set $\pvpd$ has the outer second-order tangent path ($\sotp$-)property at $\bar\lambda \in \pvpd$, if there is a constant $M>0$ and a neighborhood $V$ of $\bar\lambda$ such that for all $\lambda\in \pvpd\cap V$, it holds that:
 \begin{enumerate}[label=\textup{\textrm{(\roman*)}},topsep=0pt,itemsep=0ex,partopsep=0ex]
      \item We can select a linear subspace $L(\lambda)\subseteq \lin(T_{\pvpd}(\lambda))$ such that $L(\lambda)\to \lin(T_{\pvpd}(\bar\lambda))$ as $\lambda\to\bar\lambda$;
      \item For all $p\in L(\lambda)$ with $\|p\|=1$, we have $\dist(0,T^{2}_\pvpd(\lambda,p))<M$. %\\[-1ex]
    \end{enumerate} 
    If this holds for all $\bar\lambda\in \pvpd$, then we say that $\pvpd$ has the $\sotp$-property.
\end{defn}

The $\sotp$-property is a weaker variant of the uniform second-order tangent path ($\usotp$-)property recently introduced in \cite[Definition 3.16]{ouyang2023partI}. Compared to the $\usotp$-property, uniform length of the second-order path along the direction $p \in L(\lambda)$ is not required and we only need the curvature of the second-order outer tangent set to be uniformly bounded rather than the inner one. Since we only need to control the second subderivative $\rd^2\vp$, which is defined as limit of second-order difference quotients, uniformness of the tangent paths is not necessary. In addition, since $\vp$ is assumed to be twice epi-differentiable at $\bar x$ for $\bar v$ in \cref{cond_sosub1} and \cref{cond_sosub2}, the inner second-order tangent set can be replaced by the outer one (as illustrated in the proof of \cref{lemma5-10}). 

Since the $\sotp$-property is implied by the $\usotp$-property, all sets having the $\usotp$-property also have $\sotp$-property. In particular, this includes  $C^2$-pointed and $C^2$-cone reducible sets, cf. \cite{ouyang2023partI}.

% \comnew{More: any new applications?}
%Compared to the $\usotp$ defined in \cite{ouyang2023partI}, the first condition is the same, while in the second condition, the uniform length the second-order path along the direction $v\in S(\lambda)$ is not needed and we only need the curvature of the outer tangent cone to be uniformly bounded rather than the inner one in \cite{ouyang2023partI}. 
%\comnew{However, in \cite[?]{ouyang2023partI}, where to obtain the strict second subderivative one needs to control the second-order difference quotients $\Delta_t^2\vp(x|v)$ for every $(x,v)\in \gph\partial\vp$ that is near $(\bar x,\bar v)$, we only need to consider those pair where $\vp$ is twice epidifferentiable as in \cref{cond1_gj} and \cref{cond_sosub1}. This allows us to replace the $\usotp$ by a weaker one.} 

Next, we construct two counterexamples to illustrate that both conditions in \cref{def-sotp} are necessary for the implication ``\cref{ssosc} $\implies$ \cref{cond_gj}''.

\begin{figure}[t]
	\centering
	\subfigure[Visualization for \cref{exam2-11}]
	{\includegraphics[width=6.8cm]{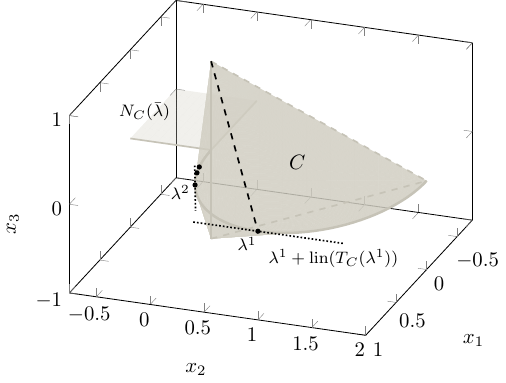}} \hspace{1.2ex}
	\subfigure[Visualization for \cref{exam2-12}]
	{\includegraphics[width=5.8cm]{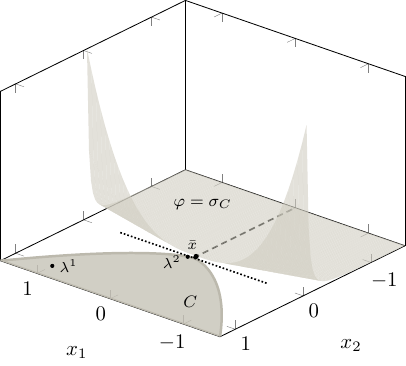}}
	%\subfigure[Wall-clock time vs.\ $ \frac{f(x_k) - f^{*}}{\max \{f^{*}, 1\}} $]
	%{\includegraphics[width=5.4cm,trim=40 30 45 35,clip]{plot/cifar10/NLS/time.png}}
	\caption{Illustration of the sets and constructions in \cref{exam2-11} and \cref{exam2-12}. \label{fig:examples}}
\end{figure}

\begin{example}[Necessity of condition $\mathrm{(i)}$]
    \label{exam2-11}
We consider:
    \begingroup
    \allowdisplaybreaks
    \begin{align*}
        &C_1= \{(0,0,x_3): x_3 \in[-1,1]\}, \quad C_2=   \{(x_1,x_2,0):x_1^2+(x_2-1)^2=1, \, x_1 \geq 0\}, 
        \\& C=\overline{\mathrm{conv}}(C_1\cup C_2),\quad                
          \vp_d(x)=\sigma_C(x), \quad F=I,\quad \vp=\vp_d, \quad \text{$f$ free},
    \end{align*}
    \endgroup
    and $\bar x = \bar \lambda = (0,0,0)^\top$. Then, it holds that $\bar\lambda\in\partial \vp(0) = C$ and it follows $N_C(\bar\lambda)=\{(x_1,x_2,0)^\top: x_1,x_2\leq 0\}$. By construction, $\vp$ is $C^2$-fully decomposable at $\bar x$ with decomposition pair $(\vp_d, F)$. Hence, using \cref{eq:sub-strict} with $\bar v = \revise{\D F(\bar x)^\top} \bar\lambda = 0$, we have %We can now apply \cite[Lemma 5.3.27]{milzarek2016numerical} to show that:
    \[     \rd^2\vp(\bar x|0)(h)=\langle h, H(\bar x,\bar \lambda) h \rangle +\iota_{N_C(\bar\lambda)}(h)=\iota_{N_C(\bar\lambda)}(h) \quad \forall~h \in \Rn           \]
    and the strong second-order sufficient condition \cref{ssosc} is equivalent to:
    \begin{align}
        \label{exam2-11eq1}
        \iprod{h}{\nabla^2f(\bar x)h} \geq \sigma\|h\|^2 \quad \forall~h\in \aff(N_C(\bar\lambda)) = \spa\{(1,0,0)^\top,(0,1,0)^\top\}.   
    \end{align}
    Next, let us define $x^k :=\frac{1}{k}(\frac{1}{k},-\sqrt{1-1/k^{2}},0)^\top$ and $\lambda^k:=(\frac{1}{k},1-\sqrt{1-{1}/{k^2}},0)^\top$. We first claim that $\lambda^k\in\partial\vp(x^k)=\partial\vp_d(x^k) = \{\lambda \in C: \iprod{\lambda}{x^k} = \vp_d(x^k)\} $, which requires to compute $\vp_d$. Let $x \in K := \{x \in \R^3: x_1 > 0, \, x_2<0, \, \text{and}\, |x_3|<\sqrt{x_1^2+x_2^2}+x_2\}$ be given. Applying \cite[Theorem 3.3.2]{HirLem01}, we can calculate:
    \begin{align*}
        %\vp_d(x)&=
        \sigma_C(x) =\max\{\sigma_{C_1}(x),\sigma_{C_2}(x)\}
        =\max\{|x_3|,\sqrt{x_1^2+x_2^2}+x_2\}=\sqrt{x_1^2+x_2^2}+x_2.
    \end{align*}
   This expression is valid for all $x \in K$ and hence, $\vp$ is twice continuously differentiable at such $x$. The gradient and Hessian of $\vp$ at $x$ are given by: %$  \nabla \vp(x)=(
    %\frac{x_1}{\sqrt{x_1^2+x_2^2}},
   %\frac{x_2}{\sqrt{x_1^2+x_2^2}}+1,
   % 0 )^\top$, and its Hessian is given by:
   %
   \[ \nabla \vp(x) = \frac{1}{n(x)} \begin{pmatrix} {x_1} \\ n(x) + {x_2} \\ 0 \end{pmatrix}, \quad
        \nabla^2\vp(x)=\frac{1}{n(x)^3}\begin{pmatrix}
             {n(x)^2}-{x_1^2} & -{x_1x_2} & 0 \\
             -{x_1x_2} & {n(x)^2}-{x_2^2} & 0  \\
             0 & 0 & 0
            \end{pmatrix}, \]
            %
    %\[ \nabla \vp(x) = \begin{pmatrix} \frac{x_1}{n(x)} \\ \frac{x_2}{n(x)}+1 \\ 0 \end{pmatrix}, \quad
      %  \nabla^2\vp(x)=\begin{pmatrix}
       %      \frac{1}{n(x)}-\frac{x_1^2}{n(x)^3} & -\frac{x_1x_2}{n(x)^3} & 0 \\
        %     -\frac{x_1x_2}{n(x)^3} & \frac{1}{n(x)}-\frac{x_2^2}{n(x)^3} & 0  \\
         %    0 & 0 & 0
          %  \end{pmatrix}, \]
            %
            where $n(x) := \sqrt{x_1^2+x_2^2}$. We then obtain $x^k \in K$, $\lambda^k \in C_2$, and $\vp_d(x^k)=\langle \lambda^k,x^k \rangle$ for all $k$, which verifies $\lambda^k\in\partial\vp(x^k)$. In addition, using the implicit function theorem, $\proxs$ is differentiable at $z^k=x^k+\tau\lambda^k$ with $\revise{\D\proxs(z^k)}=(I+\tau\nabla^2\vp(x^k))^{-1}$. (Notice that $\lambda^k\in\partial\vp(x^k)$ implies $x^k = \proxs(z^k)$). It holds that $z^k\to (0,0,0)^\top$ and 
            %
   % \[  \nabla^2\vp(x^k)\to \begin{pmatrix}
    %    \infty & -1 & 0 \\
    %    -1 & 0 & 0 \\
    %    0 & 0 & 0
   % \end{pmatrix}, \quad J\proxs(z^k)=(I+\tau\nabla^2\vp(x^k))^{-1}\to   \begin{pmatrix}
    %    0 & 0 & 0 \\
    %    0 & 1 & 0 \\
   %     0 & 0 & 1
   % \end{pmatrix},       \]
   %
   \[ \revise{\D\proxs(z^k)}=(I+\tau\nabla^2\vp(x^k))^{-1} \to \mathrm{diag}(0,1,1)   %\begin{pmatrix} 0 & 0 & 0 \\  0 & 1 & 0 \\  0 & 0 & 1 \end{pmatrix} 
   =: \bar D \quad \text{as} \; k \to \infty.\]
   Thus, the second-order condition \cref{cond_gj} for $\bar D \in \partial\proxs(0)$ requires:
    %which lies in $\partial\proxs(\bar z)$ with $\bar z= \bar x$. So condition \cref{cond_gj} requires that:
    \[    \iprod{h}{\nabla^2f(\bar x)h} \geq \sigma\|h\|^2 \quad \forall~h\in \spa\{(0,1,0)^\top,(0,0,1)^\top\}.         \]
    Of course, this is not implied by condition~\cref{exam2-11eq1}. Furthermore, \cref{cond1_gj} and \cref{cond_sosub1} fail to hold. In this example, condition $\mathrm{(i)}$ in the definition of the $\sotp$-property is not satisfied. We actually have $\lin(T_C(\bar\lambda))=\{(0,0,t):t\in \R\}$ but $\lin(T_C(\lambda^k))\to \{(t,0,0):t\in \R\}$.
\end{example}

\begin{example}[Necessity of condition $\mathrm{(ii)}$]
    \label{exam2-12} We consider \cite[Example 3.21]{ouyang2023partI}:
    \[ C =\{ x \in \R^2: x_2\geq \tfrac{2}{3}|x_1|^\frac32\},\quad F=I,\quad \vp(x)=\vp_d(x)=\sigma_C(x), \quad \text{$f$ free} \]
    with $\bar x = \bar\lambda =(0,0)^\top$. Then, it holds that $\bar\lambda\in\partial \vp(0)$,
    \[   N_C(\bar\lambda)=\{(0,t):t\leq 0\},\quad \text{and} \quad \lin(T_C(\bar\lambda))=\{(t,0):t\in\R\}.        \]
    In addition, we have $\lin(T_C(\lambda))\to\lin(T_C(\bar\lambda))$ for $C \backslash \ri(C)\ni\lambda\to\bar\lambda$. Therefore, the first condition in the definition of the $\sotp$-property is satisfied. However, the second condition does not hold since it can be shown that the outer second-order tangent sets at $\bar\lambda$ for the directions $(1,0)$ and $(-1,0)$ are both empty. Similar to \cref{exam2-11} and by construction of $\vp$, it follows $\rd^2\vp(\bar x|\bar\lambda)(h)=\iota_{N_C(\bar\lambda)}(h)$ for all $h$. Hence, condition \cref{ssosc} is again equivalent to:
    \[    \iprod{h}{\nabla^2f(\bar x)h} \geq \sigma \|h\|^2 \quad \forall~h\in \aff(N_C(\bar\lambda)) = \spa\{(0,1)^\top\}.                   \]
    Let us set $x^k=(\frac{1}{k^3},-\frac{1}{k})^\top$ and $\lambda^k=(\frac{1}{k^{4}},\frac{1}{k^6})^\top$. As calculated in \cite[Example 3.21]{ouyang2023partI}, it holds that $\lambda^k\in\partial\vp(x^k)$ and 
    \[   \nabla^2\vp(x^k)=  2\begin{pmatrix}
        \frac{1}{k}  & -\frac{1}{k^3}  \\
        -\frac{1}{k^3}  & \frac{1}{k^5}
     \end{pmatrix}\to 0.    \]
    Setting $z^k=x^k+\tau \lambda^k$, we have $\proxs(z^k)=x^k$, $z^k\to\bar z=\bar x$ and we further obtain:
    \[      \revise{\D\proxs(z^k)}=(I+\tau\nabla^2\vp(x^k))^{-1}\to I\in\partial\proxs(\bar z).                       \]
    Thus, \cref{cond_gj} requires $\nabla^2 f(\bar x)\succeq \sigma I$. Consequently, condition \cref{cond_gj} is stronger than \cref{ssosc}. Moreover, condition \cref{cond1_gj} fails, which implies that \cref{cond_sosub1} also fails. 
\end{example}

Recall that condition \ref{A1} is assumed throughout this paper. We now list \revise{our} main assumptions to ensure \cref{cond_sosub1} and ``\cref{ssosc} $\implies$ \cref{cond_gj}'':
\begin{assum}
    We consider the conditions: 
\begin{enumerate}[label=\textup{\textrm{(B.\arabic*)}},topsep=0pt,itemsep=0.5ex,partopsep=0ex]
        \item \label{B1}  $\vp$ is $C^2$-strictly decomposable at $\bar x$ for $\bar\lambda$ with decomposition pair $(\vp_d\equiv\sigma_\pvpd,F)$.
        \item \label{B2} In addition to \ref{B1}, suppose that $\pvpd$ has the $\sotp$-property at $\bar\lambda$.
    \end{enumerate}
\end{assum}

As in \cite{ouyang2023partI}, our first task is to exploit the postulated geometric properties of the support set $\pvpd$ in the estimation of the second subderivative of $\vp$. 

% However, since full decomposability of $\vp$ is not assumed, we can not directly use the chain rule derived in \cite[Theorem 3.10]{ouyang2023partI}.  
%The main tool is the next technical lemma.
The following technical lemma is our main tool. %Its proof
\begin{lem}
    \label{lemma5-10}
    Suppose that $\vp$ has the decomposition $\vp = \sigma_{\pvpd}\circ F$ around $x \in \dom(\vp)$ and let $\lambda \in \partial\sigma_{\pvpd}(F(x))$, $ p\in T_\pvpd(\lambda)$, $w\in \R^n$ be given with $\langle p,\revise{\D F(x)}w\rangle>0$ and $\langle p,F(x)\rangle=0$. Furthermore, assume that $\dist(0,T^{2}_\pvpd(\lambda,p))<M$ and $\vp$ is twice epi-differentiable at $x$ for $\revise{\D F(x)^\top} \lambda$. Then, it holds that
    \[  \rd^2\vp(x|\revise{\D F(x)^\top}\lambda)(w)\geq   \frac{|\langle p,\revise{\D F(x)}w \rangle|^2}{M\|F(x)\|}+\iprod{w}{H(x,\lambda)w     },       \]
    where the right-hand side is understood as $\infty$ if $F(x)=0$ \revise{and we recall $H(x,\lambda) = \sum_{i=1}^m \lambda_i \nabla^2 F_i(x)$}.
\end{lem}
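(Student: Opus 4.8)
The plan is to estimate $\rd^2\vp(x|DF(x)^\top\lambda)(w)$ from below by constructing, for a suitable sequence $t_k\downarrow 0$, an explicit feasible curve in $\dom(\sigma_\pvpd\circ F)$-type behavior that tracks the direction $w$ to second order, and then bounding the difference quotient $\Delta_{t_k}^2\vp(x|DF(x)^\top\lambda)$ along it. Since we only need a lower bound on the lower epi-limit, it suffices to show that for \emph{every} sequence $t_k\downarrow 0$ and every $\tilde w_k\to w$, the quantity $\liminf_k\Delta_{t_k}^2\vp(x|DF(x)^\top\lambda)(\tilde w_k)$ is at least the claimed right-hand side; twice epi-differentiability lets us work with this liminf freely. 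The key reduction is to exploit $\vp=\sigma_\pvpd\circ F$ and write $\sigma_\pvpd(F(x+t\tilde w))$ via the definition of the support function: $\sigma_\pvpd(F(x+t\tilde w))\ge \langle \mu, F(x+t\tilde w)\rangle$ for any $\mu\in\pvpd$, and in particular, choosing $\mu=\lambda+t s + \frac{t^2}{2}r$ with $s\in T_\pvpd(\lambda)$ a positive multiple of $p$ and $r\in T^2_\pvpd(\lambda,s)$ the nearly-minimal-norm second-order tangent vector with $\|r\|<M$ (rescaled appropriately), one gets a point in $\pvpd$ up to $o(t^2)$ error, which is enough after passing to the liminf.

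Concretely, I would proceed as follows. First, expand $F(x+t\tilde w_k) = F(x) + t\,DF(x)\tilde w_k + \frac{t^2}{2}(D^2F(x)[\tilde w_k,\tilde w_k] + \text{correction}) + o(t^2)$ using the $C^2$-smoothness of $F$; the relevant inner products produce $\langle p, DF(x)w\rangle$ and $\langle \lambda, D^2F(x)[w,w]\rangle = \iprod{w}{H(x,\lambda)w}$ in the limit. Second, set the scaling of $s$: since $\langle p, DF(x)w\rangle>0$ and $\langle p,F(x)\rangle=0$, pick $s = cp$ where $c = \langle p,DF(x)w\rangle / (M\|F(x)\|)$ — this is exactly the coefficient that will reproduce the claimed leading term $|\langle p,DF(x)w\rangle|^2/(M\|F(x)\|)$; one computes $\langle s, F(x)\rangle = 0$, $\langle s, DF(x)w\rangle = c\langle p, DF(x)w\rangle$, and then $\langle s, t\,DF(x)\tilde w_k\rangle \cdot t^{-1}$ etc. plus the $\langle r, F(x)\rangle$ term contributes $-\frac{1}{2}\|r\|\,\|F(x)\| \cdot \|s\|^2$-type bound via $\langle r,F(x)\rangle \ge -\|r\|\|F(x)\| > -M\|F(x)\|$ (after rescaling $r$ by $\|s\|^2$ since $T^2_\pvpd(\lambda,s)$ scales quadratically in $s$). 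Third, assemble: $\Delta_{t_k}^2\vp(x|DF(x)^\top\lambda)(\tilde w_k) \ge \frac{2}{t_k^2}[\langle \mu_k, F(x+t_k\tilde w_k)\rangle - \sigma_\pvpd(F(x)) - t_k\langle DF(x)^\top\lambda, \tilde w_k\rangle]$, use $\sigma_\pvpd(F(x)) = \langle\lambda,F(x)\rangle$ (valid since $\lambda\in\partial\sigma_\pvpd(F(x))$), cancel the $O(1)$ and $O(t_k)$ terms, and read off the $O(t_k^2)$ coefficient as $c\langle p,DF(x)w\rangle - \tfrac12 c^2 M\|F(x)\| + \iprod{w}{H(x,\lambda)w}$, which after plugging in $c$ equals $\tfrac{|\langle p,DF(x)w\rangle|^2}{2M\|F(x)\|} + \iprod{w}{H(x,\lambda)w}$ — and here I would need to double-check the factor of $2$, optimizing $c$ over $\langle s,DF(x)w\rangle c - \tfrac12 c^2 M\|F(x)\|$ gives $c^\star = \langle p,DF(x)w\rangle/(M\|F(x)\|)$ with optimal value $\tfrac{|\langle p,DF(x)w\rangle|^2}{2M\|F(x)\|}$, so to match the stated bound I should instead take $r$ with $\|r\| \le 2\dist(0,T^2_\pvpd(\lambda,p)) < 2M$ absorbed, or more cleanly use that $T^2_\pvpd$ is only needed along a single chosen $s$ and choose $c$ so the bound comes out as stated; I would reconcile this against Definition~\ref{def-sotp} and the precise constant conventions there.

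The main obstacle I anticipate is the careful handling of the second-order tangent set and the fact that $r\in T^2_\pvpd(\lambda,s)$ only guarantees $\dist(\lambda + ts + \tfrac{t^2}{2}r, \pvpd) = o(t^2)$, not exact membership — so $\mu_k$ is only \emph{approximately} in $\pvpd$, and I cannot directly write $\sigma_\pvpd(F(x+t_k\tilde w_k)) \ge \langle \mu_k, F(x+t_k\tilde w_k)\rangle$. The fix is standard but needs care: project $\mu_k$ onto $\pvpd$ to get $\hat\mu_k\in\pvpd$ with $\|\hat\mu_k - \mu_k\| = o(t_k^2)$, use $\sigma_\pvpd(F(x+t_k\tilde w_k)) \ge \langle\hat\mu_k, F(x+t_k\tilde w_k)\rangle$, and observe that $\langle \hat\mu_k - \mu_k, F(x+t_k\tilde w_k)\rangle = o(t_k^2)$ since $F(x+t_k\tilde w_k)\to F(x)$ is bounded; this error is killed when divided by $t_k^2$ and passed to the liminf. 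A secondary subtlety is that the hypothesis $\langle p, DF(x)w\rangle > 0$ (strict) is what makes the leading term positive and lets the linear-in-$t$ contribution from $s$ dominate favorably; I would make sure the sign bookkeeping uses this strictness, and note the degenerate case $F(x)=0$ separately where the right-hand side is $+\infty$ by convention and there is nothing to prove (or rather, the statement is vacuous/trivially consistent with the generalized-quadratic form of $\rd^2\vp$ at such points).
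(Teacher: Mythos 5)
There is a genuine gap, and it sits exactly where the twice epi-differentiability hypothesis has to do its work. Your reduction is to show that $\liminf_k \Delta_{t_k}^2\vp(x|DF(x)^\top\lambda)(\tilde w_k)\geq$ RHS for \emph{every} sequence $t_k\downarrow 0$ and $\tilde w_k\to w$, and your construction then plugs the same $t_k$ into the tangent path $\mu_k=\lambda+t_k s+\frac{t_k^2}{2}r$. But $\dist(0,T^2_\pvpd(\lambda,p))<M$ only concerns the \emph{outer} second-order tangent set: it yields points $\xi^k=\lambda+s_kp+\frac12 s_k^2 r^k\in\pvpd$ with $\|r^k\|\leq M$ along \emph{some} sequence of scales $s_k\downarrow 0$ (projection onto $\pvpd$ absorbs the $o(s_k^2)$, as you note), not along an arbitrary prescribed sequence. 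So for a general $t_k$ your $\mu_k$ simply may not exist (the inner second-order tangent set can be empty while the outer one is not — cf.\ Example~\ref{exam2-12}). The paper's proof goes the other way around: it fixes the scales $s_k$ coming from the tangent path, \emph{defines} $t_k$ in terms of $s_k$ (proportional to $s_k$ when $F(x)\neq 0$, and $t_k=s_k^2$ when $F(x)=0$), and then invokes the recovery-sequence half of twice epi-differentiability to produce $w^k\to w$ with $\Delta^2_{t_k}\vp(x|DF(x)^\top\lambda)(w^k)\to\rd^2\vp(x|DF(x)^\top\lambda)(w)$ along that chosen $t_k$. That is the actual role of the epi-differentiability assumption; your plan invokes it only as a vague license to "work with the liminf freely" and never uses the recovery-sequence direction, so the central mechanism is missing.

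Two further points. First, the case $F(x)=0$ is not vacuous: the lemma then asserts $\rd^2\vp(x|DF(x)^\top\lambda)(w)=\infty$, which requires its own argument — with $t_k=s_k^2$ the leading term is $s_kt_k\langle p,DF(x)w^k\rangle = t_k^{3/2}\langle p,DF(x)w^k\rangle$, and the strict inequality $\langle p,DF(x)w\rangle>0$ forces the quotient to blow up; dismissing this case loses precisely the "$+\infty$ off the affine hull" conclusion that Theorem~\ref{thm3-3} needs. Second, your factor-of-$2$ worry is a bookkeeping slip rather than a real obstruction: the difference quotient divides by $\frac12 t_k^2$, so the cross term enters as $2c\langle p,DF(x)w\rangle$ and the choice $c=\langle p,DF(x)w\rangle/(M\|F(x)\|)$ gives exactly $2c\langle p,DF(x)w\rangle - c^2M\|F(x)\| = |\langle p,DF(x)w\rangle|^2/(M\|F(x)\|)$; there is no need to modify $M$ or rescale $r$ beyond the quadratic scaling $T^2_\pvpd(\lambda,cp)=c^2T^2_\pvpd(\lambda,p)$ you already identified.
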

\begin{proof}
%     From the view point of \cite[?]{ouyang2023partI}, it suffices to prove:
%     \[   \rd^2\vp_d(F(x)|\lambda)(\D F(x)w)\geq   \frac{|\langle v,\D F(x)w \rangle|^2}{2M\|F(x)\|}.                  \]
%     Now assume $t_k\downarrow 0$ and $s^k\to \D F(x)w$ satisfy that $ \rd^2\vp_d(F(x)|\lambda)(\D F(x)w) =\lim_{k\to\infty}\Delta_{t_k}^2\vp(F(x)|\lambda)(s^k)$. By continuity, we may assume that for all $k\in\n$, it holds that $\langle v,s^k \rangle>0$.  By the assumed $\sotp$ property and \cite{ouyang2023partI}, we have
%     \[   \Delta_{t_k}^2\vp(F(x)|\lambda)(s^k)\geq \langle v,s^k \rangle \min\{\frac{\delta}{t_k},\frac{\langle v,s^k \rangle}{M\|F(x^k)+t_ks^k\|} \}  ,   \]
%     where $\delta>0$ is a fixed constant. The result follows by letting $k\to\infty$.
The proof of \cref{lemma5-10} is inspired by the proof of \cite[Lemma 3.23]{ouyang2023partI}. The condition $\dist(0,T^{2}_\pvpd(\lambda,p))< M$ implies that there is a sequence $\{\xi^k\}_k$ such that
    \[             \xi^k=\lambda+s_kp+\frac{1}{2}s_k^2r^k, \quad \xi^k \in \pvpd, \quad s_k\downarrow 0, \quad \|r^k\|\leq M.               \]
    The twice epi-differentiability of $\vp$ at $x$ for $\revise{\D F(x)^\top}\lambda$ says that for any choice of $t_k\downarrow0$, we can find $w^k\to w$ such that
    \[    \rd^2\vp(x|\revise{\D F(x)^\top}\lambda)(w)= \lim_{k\to\infty} \Delta_{t_k}^2  \;\! \vp(x|\revise{\D F(x)^\top}\lambda)(w^k). \] 
    %
    %\lim_{k\to\infty}\frac{\sigma_{\pvpd}(F(x+t_kw^k))-\sigma_{\pvpd}(F(x))-t_k\langle \D F(x)^\top \lambda,w^k\rangle}{\frac12t_k^2}            . \]
    %
    We now select $t_k$ as follows:
   \[     t_k= \begin{cases} s_k^2 & \text{if } F(x)=0, \\ \frac{Ms_k\|F(x)\|}{\langle p,\revise{\D F(x)}w\rangle}  & \text{if } F(x)\neq 0. \end{cases} \] 
   In particular, in the case $F(x)\neq 0$, this choice implies 
   \begin{align}
    \label{formulask}
    s_k=\frac{t_k\langle p,\revise{\D F(x)}w\rangle}{M\|F(x)\|}.       
   \end{align}
   Noticing $\partial\sigma_{\pvpd}(F(x)) = \{\mu \in \pvpd: \iprod{\mu}{F(x)} = \sigma_{\pvpd}(F(x))\}$, we now have:
   \begin{align*}
    & \Delta_{t_k}^2  \;\! \vp(x|\revise{\D F(x)^\top}\lambda)(w^k) = \frac{\sigma_{\pvpd}(F(x+t_kw^k))-\sigma_{\pvpd}(F(x))-t_k\langle \revise{\D F(x)^\top}\lambda,w^k\rangle}{\frac12t_k^2}\\
  %  &=\frac{\sup_{\mu\in \pvpd}\langle \mu,F(x+t_kw^k)\rangle-\langle \lambda,F(x)\rangle-t_k\langle \D F(x)^\top \lambda,w^k\rangle}{\frac12t_k^2}   \\
    & \hspace{6ex} \geq \frac{\langle \xi^k,F(x+t_kw^k)\rangle-\langle \lambda,F(x)\rangle-t_k\langle \revise{\D F(x)^\top}\lambda,w^k\rangle}{\frac12t_k^2} \\
    %&=\frac{\langle \lambda+s_kp+\frac{1}{2}s_k^2r_k,F(x+t_kw_k)\rangle-\langle \lambda,F(x)+t_k \D F(x)w_k\rangle}{\frac12t_k^2}\\
    & \hspace{6ex} = \frac{\langle s_kp+\frac{1}{2}s_k^2r^k,F(x+t_kw^k)\rangle }{\frac12t_k^2}+\frac{\langle \lambda,F(x+t_kw^k)-F(x)-t_k \revise{\D F(x)}w^k\rangle}{\frac12t_k^2} \\
    & \hspace{6ex} =\frac{\langle s_kp+\frac{1}{2}s_k^2r^k,F(x)+t_k\revise{\D F(x)}w^k+\frac{1}{2}t_k^2 E(t_k,w^k)\rangle}{\frac12t_k^2}+\iprod{\lambda}{E(t_k,w^k)},  
   \end{align*}
   where $E(t_k,w^k) := \int_0^1\revise{\D^2F(x+s t_kw^k)}[w^k,w^k] \,\mathrm{d}s \in \Rm$. %We note that by calculus, it holds that:
 %  \[   F(x+t_kw_k)=F(x)+t_k\D F(x)w_k+\frac{1}{2}t_k^2\underbrace{\int_0^1\D^2F(x+ut_kw_k)[w_k,w_k]  du}_{D^2\overline{F}_k[w_k,w_k]}.                 \]
   %Using this notation, we continue the calculation:
   %\begin{align*}
   % &\frac{\langle s_\kappa +\frac{1}{2}s_k^2r_k,F(x+t_kw_k)\rangle }{\frac12t_k^2}+\frac{\langle \lambda,F(x+t_kw_k)-F(x)-t_k \D F(x)w_k\rangle}{\frac12t_k^2} \\
   % &=\frac{\langle s_kp+\frac{1}{2}s_k^2r_k,F(x)+t_k\D F(x)w_k+\frac{1}{2}t_k^2 D^2\overline{F}_k[w_k,w_k]\rangle }{\frac12t_k^2}+\lambda^\top D^2\overline{F}_k(w_k,w_k)  
   %\end{align*}
   Utilizing the assumption $\iprod{p}{F(x)} = 0$, we further obtain %Next, we calculate the inner product term:
   \begin{align*}
    &\hspace{-4ex}\langle s_kp+\tfrac{1}{2}s_k^2r^k,F(x)+t_k\revise{\D F(x)}w^k+\tfrac{1}{2}t_k^2 E(t_k,w^k)\rangle\\
    &=s_kt_k\langle p,\revise{\D F(x)}w^k\rangle+\tfrac{1}{2}s_k^2\iprod{r^k}{F(x)}+\tfrac12s_k^2 t_k \iprod{r^k}{\revise{\D F(x)}w^k}+o(t_k^2)\revise{.}  
   \end{align*}
   Hence, if $F(x)=0$, it follows:
   \[ \langle s_kp+\tfrac{1}{2}s_k^2r^k,F(x)+t_k\revise{\D F(x)}w^k+\tfrac{1}{2}t_k^2 E(t_k,w^k)\rangle = t_k^{\frac32}\langle p,\revise{\D F(x)}w^k\rangle+\mathcal O(t_k^2)                                  \]
   and due to $\langle p,\revise{\D F(x)}w^k\rangle\to \langle p,\revise{\D F(x)}w\rangle>0$, we can infer
   \begin{align*}
    %&\hspace{-4ex}\frac{\sigma_{\pvpd}(F(x+t_kw^k))-\sigma_{\pvpd}(F(x))-t_k\langle \D F(x)^\top \lambda,w^k\rangle}{\frac12t_k^2}  \\
    \Delta_{t_k}^2  \;\! \vp(x|\revise{\D F(x)^\top}\lambda)(w^k) &\geq \frac{t_k^{\frac32}\langle p,\revise{\D F(x)}w^k\rangle+\mathcal O(t_k^2)}{\frac12 t_k^2}+\iprod{\lambda}{E(t_k,w^k)}  \to\infty.    
   \end{align*}
   This proves the conclusion when $F(x)=0$. Next, let us consider the case $F(x)\neq 0$.
   Then, by \cref{formulask}, we have:
   \begin{align*}
    &\hspace{-4ex}\langle s_kp+\tfrac{1}{2}s_k^2r^k,F(x)+t_k\revise{\D F(x)}w^k+\tfrac{1}{2}t_k^2 E(t_k,w^k)\rangle\\
    &=s_kt_k\langle p,\revise{\D F(x)}w^k\rangle+\frac{1}{2}s_k^2\iprod{r^k}{F(x)}+o(t_k^2) \\
    %&\geq s_kt_k\langle p,\D F(x)w_k\rangle-\frac{1}{2}s_k^2\|r^k\|\|F(x)\|+o(t_k^2) \\
    &\geq s_k \left[t_k\langle p,\revise{\D F(x)}w^k \rangle-\frac{1}{2}s_k\|r^k\|\|F(x)\|\right]+o(t_k^2)\\
    &=\frac{t_k^2}{M\|F(x)\|}\left[ \langle p,\revise{\D F(x)}w^k \rangle \langle p,\revise{\D F(x)}w\rangle-\frac{\|r^k\|}{2M}\langle p,\revise{\D F(x)}w \rangle^2 \right] +o(t_k^2).
   \end{align*}
   Therefore, taking the limit $k \to \infty$, it holds that
   \begin{align*}
    &\lim_{k\to\infty}\frac{\langle s_kp+\frac{1}{2}s_k^2r^k,F(x)+t_k\revise{\D F(x)}w^k+\frac{1}{2}t_k^2 E(t_k,w^k)\rangle}{\frac{1}{2}t_k^2} \geq \frac{|\langle p,\revise{\D F(x)}w\rangle|^2}{M\|F(x)\|}.
   \end{align*}
   Combining the previous steps and calculations, this yields
   %This further implies that:
   \begin{align*}
    \rd^2\vp(x|\revise{\D F(x)^\top}\lambda)(w) %&= \lim_{k\to\infty}\frac{\sigma_{\pvpd}(F(x+t_kw^k))-\sigma_{\pvpd}(F(x))-t_k\langle \D F(x)^\top \lambda,w^k\rangle}{\frac12t_k^2} \\
    %&\geq  \limsup_{k\to\infty}   \frac{\langle s_kp+\frac{1}{2}s_k^2r_k,F(x)+t_k\D F(x)w_k+\frac{1}{2}t_k^2 D^2\overline{F}[w_k,w_k]\rangle }{\frac12t_k^2}+\lambda^\top D^2\overline{F}[w_k,w_k] \\
    &\geq \frac{|\langle p,\revise{\D F(x)}w\rangle|^2}{M\|F(x)\|}+\iprod{w}{H(x,\lambda)w}, 
   \end{align*}
   which finishes the proof.
\end{proof}

Thanks to \cref{lemma5-10}, we can now establish the key result of this subsection.

\begin{thm}
    \label{thm3-3}
    Let \ref{B1} be satisfied and suppose there is a sequence $\gph(\partial\vp)\ni(x^k,v^k)\to(\bar x,\bar v)$ such that $\vp$ is twice epi-differentiable at $x^k$ for $v^k$ for all $k \in \mathbb N$. Assume further that there exist a constant $M > 0$ and $\{\lambda^k\}_k$ with $v^k=\revise{\D F(x^k)^\top}\lambda^k$ and $\lambda^k\in\partial\sigma_{\pvpd}(F(x^k))$ such that the following conditions hold:
\begin{enumerate}[label=\textup{\textrm{(\roman*)}},topsep=0pt,itemsep=0.5ex,partopsep=0ex]
        \item There are linear subspaces $L_k\subseteq \lin(T_{\pvpd}(\lambda^k))$ such that $L_k\to\lin(T_{\pvpd}(\bar\lambda))$,
        %There is a sequence of linear subspaces $L_k\subseteq \lin(T_{\pvpd}(\lambda^k))$, $k\in\n$, such that $L_k\to\lin(T_{\pvpd}(\bar\lambda))$,
        \item For all $p\in L_k$ with $\|p\|=1$, we have $\dist(0,T^{2}_\pvpd(\lambda^k,p))<M$.
    \end{enumerate}
    Then, it holds that:
    \begin{align}
        \label{thm3-3eq}
        {\eliminf}_{k\to\infty}\,\rd^2\vp(x^k|\revise{\D F(x^k)^\top}\lambda^k)\geq \langle \cdot, H(\bar x,\bar \lambda)\cdot\rangle+\iota_{\aff(\revise{\D F(\bar x)^{-1}}N_{\pvpd}(\bar\lambda))}.
    \end{align}
    In particular, if \ref{B1} and \ref{B2} hold, then condition \cref{cond_sosub1} is satisfied.
\end{thm}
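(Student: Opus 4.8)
The plan is to establish the lower epi-limit bound \cref{thm3-3eq} and then read off \cref{cond_sosub1}. Fix $w\in\Rn$ and an arbitrary sequence $w^k\to w$; by the definition of the lower epi-limit it is enough to show that $\liminf_{k\to\infty}\rd^2\vp(x^k|v^k)(w^k)\geq\langle w,H(\bar x,\bar\lambda)w\rangle+\iota_{\affS}(w)$, where I write $S:=DF(\bar x)^{-1}N_{\pvpd}(\bar\lambda)$, $\affS:=\aff(S)$, and $v^k=DF(x^k)^\top\lambda^k$. Two preliminary facts will be used repeatedly. \emph{(a)~A universal curvature bound.} For large $k$ the decomposition $\vp=\vp(\bar x)+\sigma_{\pvpd}\circ F$ holds around $x^k$, and since $\lambda^k\in\partial\sigma_{\pvpd}(F(x^k))\subseteq\pvpd$ we have $\sigma_{\pvpd}(F(x^k+t\tilde w))\geq\langle\lambda^k,F(x^k+t\tilde w)\rangle$ and $\sigma_{\pvpd}(F(x^k))=\langle\lambda^k,F(x^k)\rangle$; a second-order Taylor expansion of $F$ then gives $\Delta_t^2\vp(x^k|v^k)(\tilde w)\geq\langle\lambda^k,E_k(t,\tilde w)\rangle$ with a remainder $E_k(t,\tilde w)\to D^2F(x^k)[w^k,w^k]$ as $t\downarrow0$, $\tilde w\to w^k$, whence $\rd^2\vp(x^k|v^k)(w^k)\geq\langle w^k,H(x^k,\lambda^k)w^k\rangle$ for all large $k$ and \emph{every} $w^k$. \emph{(b)~Multiplier convergence $\lambda^k\to\bar\lambda$.} Dualizing the strict condition \cref{eq:cq-strict} gives $\ker DF(\bar x)^\top\cap T_{\pvpd}(\bar\lambda)=\{0\}$. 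As $\{\lambda^k\}\subseteq\pvpd$ and $DF(x^k)^\top\lambda^k=v^k$ is bounded, $\{\lambda^k\}$ is bounded (otherwise a normalized cluster point of $\lambda^k$ lies in $\pvpd^{\infty}\subseteq T_{\pvpd}(\bar\lambda)$ and in $\ker DF(\bar x)^\top$, forcing it to be $0$); and any cluster point $\lambda^\ast$ satisfies $\lambda^\ast-\bar\lambda\in\pvpd-\bar\lambda\subseteq T_{\pvpd}(\bar\lambda)$ with $DF(\bar x)^\top(\lambda^\ast-\bar\lambda)=0$, hence $\lambda^\ast=\bar\lambda$. Combining (a), (b) and the joint continuity of $(x,\lambda,w)\mapsto\langle\lambda,D^2F(x)[w,w]\rangle$, we obtain $\liminf_k\rd^2\vp(x^k|v^k)(w^k)\geq\langle w,H(\bar x,\bar\lambda)w\rangle$, which already proves the estimate whenever $w\in\affS$.

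It remains to handle $w\notin\affS$, where the right-hand side equals $+\infty$; this is the core of the argument. From \cref{eq:cq-strict} and standard polarity calculus, $\affS=DF(\bar x)^{-1}\aff(N_{\pvpd}(\bar\lambda))$, equivalently $\affS^{\perp}=DF(\bar x)^\top\lin(T_{\pvpd}(\bar\lambda))$ (cf.\ also \cite{ouyang2023partI}). Therefore $w\notin\affS$ yields some $\bar p\in\lin(T_{\pvpd}(\bar\lambda))$ with $\langle DF(\bar x)w,\bar p\rangle\neq0$, and after rescaling we may assume $\|\bar p\|=1$ and $c:=\langle DF(\bar x)w,\bar p\rangle>0$. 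By hypothesis~(i), $L_k\to\lin(T_{\pvpd}(\bar\lambda))$, so pick $p_k\in L_k$ with $p_k\to\bar p$ and (legitimately for large $k$, since $\|p_k\|\to1$) replace $p_k$ by $p_k/\|p_k\|\in L_k$, so that $\|p_k\|=1$. Now all hypotheses of \cref{lemma5-10} hold at $x=x^k$, $\lambda=\lambda^k$, $p=p_k$, $w=w^k$: since $\lambda^k\in\partial\sigma_{\pvpd}(F(x^k))$ is equivalent to $F(x^k)\in N_{\pvpd}(\lambda^k)$ while $p_k\in L_k\subseteq\lin(T_{\pvpd}(\lambda^k))=(\spa N_{\pvpd}(\lambda^k))^{\perp}\subseteq T_{\pvpd}(\lambda^k)$, the orthogonality $\langle p_k,F(x^k)\rangle=0$ holds automatically; hypothesis~(ii) gives $\dist(0,T^{2}_{\pvpd}(\lambda^k,p_k))<M$; $\langle p_k,DF(x^k)w^k\rangle\to c>0$; and $\vp$ is twice epi-differentiable at $x^k$ for $v^k$. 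Hence \cref{lemma5-10} yields
\[ \rd^2\vp(x^k|v^k)(w^k)\ \geq\ \frac{|\langle p_k,DF(x^k)w^k\rangle|^2}{M\|F(x^k)\|}+\langle w^k,H(x^k,\lambda^k)w^k\rangle, \]
understood as $+\infty$ when $F(x^k)=0$. Because $F(x^k)\to F(\bar x)=0$ while the numerator is eventually $\geq(c/2)^2$ and the curvature term stays bounded, the right-hand side tends to $+\infty$, so $\liminf_k\rd^2\vp(x^k|v^k)(w^k)=+\infty$. This completes the proof of \cref{thm3-3eq}.

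For the final assertion, assume \ref{B1} and \ref{B2} and let $\gph(\partial\vp)\ni(x^k,v^k)\to(\bar x,\bar v)$ with $\vp$ twice epi-differentiable at $x^k$ for $v^k$ and $\rd^2\vp(x^k|v^k)$ generalized quadratic. Robinson's CQ in \ref{B1} is stable, so the subdifferential chain rule furnishes $\lambda^k\in\partial\sigma_{\pvpd}(F(x^k))$ with $v^k=DF(x^k)^\top\lambda^k$; by observation (b) above $\lambda^k\to\bar\lambda$, so eventually $\lambda^k$ lies in the neighborhood $V$ from the $\sotp$-property of $\pvpd$ at $\bar\lambda$, and taking $L_k:=L(\lambda^k)$ produces subspaces with $L_k\subseteq\lin(T_{\pvpd}(\lambda^k))$, $L_k\to\lin(T_{\pvpd}(\bar\lambda))$, and the uniform curvature bound in (ii) — exactly hypotheses (i)--(ii). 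Thus \cref{thm3-3eq} applies. Finally, recalling (from the discussion after \cref{eq:sub-strict}) that \ref{A2} holds here with the above $S$ and $Q=\Pi_{\affS}H(\bar x,\bar\lambda)\Pi_{\affS}$, and that $\langle w,Qw\rangle=\langle\Pi_{\affS}w,H(\bar x,\bar\lambda)\Pi_{\affS}w\rangle=\langle w,H(\bar x,\bar\lambda)w\rangle$ for $w\in\affS$ while both sides carry $\iota_{\affS}(w)=+\infty$ for $w\notin\affS$, the right-hand side of \cref{thm3-3eq} coincides with $\langle\cdot,Q\cdot\rangle+\iota_{\affS}$, i.e.\ \cref{cond_sosub1} holds.

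The main obstacle is precisely the case $w\notin\affS$: one needs the orthogonal-complement identity $\affS^{\perp}=DF(\bar x)^\top\lin(T_{\pvpd}(\bar\lambda))$ (where the strict condition \cref{eq:cq-strict} is indispensable) together with the delicate check that the direction $p_k$ extracted from hypothesis~(i) is simultaneously admissible in \cref{lemma5-10} — automatically orthogonal to $F(x^k)$ and, through hypothesis~(ii), carrying a uniformly bounded second-order tangent curvature — so that $F(x^k)\to0$ can drive the difference quotient to $+\infty$; both defining properties of the $\sotp$-property are genuinely required, as \cref{exam2-11} and \cref{exam2-12} show. The multiplier convergence $\lambda^k\to\bar\lambda$ is a secondary point, settled by the recession-cone argument in (b).
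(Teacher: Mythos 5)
Your proof is correct and follows essentially the same route as the paper's: the pointwise quadratic lower bound together with $\lambda^k\to\bar\lambda$ settles the case $w\in\aff(S)$, and for $w\notin\aff(S)$ you select unit directions $p_k\in L_k$ with $\langle p_k,DF(x^k)w^k\rangle$ bounded away from zero, verify $\langle p_k,F(x^k)\rangle=0$ via $F(x^k)\in N_{\pvpd}(\lambda^k)$, apply \cref{lemma5-10}, and let $F(x^k)\to 0$ drive the difference quotients to $+\infty$, which is exactly the paper's mechanism (and the final assertion is obtained the same way, via the chain rule and the $\sotp$-property at $\bar\lambda$). The only differences are cosmetic: you re-derive directly (support-function inequality, recession-cone/dual-CQ argument for $\lambda^k\to\bar\lambda$, and the polarity identity $\aff(S)^{\perp}=DF(\bar x)^{\top}\lin(T_{\pvpd}(\bar\lambda))$) auxiliary facts that the paper imports by citation.
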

\begin{proof}
    Using \cite[Corollary 4.3]{benko2022second} and \cite[Proposition 13.20]{rockafellar2009variational}, we have $\rd^2\vp(x^k|v^k)\geq \langle \cdot,H(x^k,\lambda^k)\cdot\rangle$ and applying \cite[Proposition 3.6]{ouyang2023partI}, it follows $\lambda^k\to\bar\lambda$. Thus, taking the epi-liminf and noticing that $\langle \cdot,H(x^k,\lambda^k)\cdot\rangle$ converges continuously, we obtain
    \begin{align}
        \label{thm3-3eq1}
        {\eliminf}_{k\to\infty}\,\rd^2\vp(x^k|v^k)\geq \langle \cdot, H(\bar x,\bar \lambda)\cdot\rangle.  
    \end{align}
    Applying \cite[Lemma 3.22]{ouyang2023partI}, we know $\aff(\revise{\D F(\bar x)^{-1}}N_{\pvpd}(\bar\lambda))=\revise{\D F(\bar x)^{-1}}\aff(N_{\pvpd}(\bar\lambda))$. Let us now take $w\notin \revise{\D F(\bar x)^{-1}}(\aff(N_{\pvpd}(\bar\lambda)))$, which implies $\|\Pi_{\lin(T_{\pvpd}(\bar\lambda))}(\revise{\D F(\bar x)}w)\|=2\epsilon$ for some $\epsilon>0$. By the definition of the epi-liminf, we may find $w^k\to w$ such that
    \[   ({\eliminf}_{k\to\infty}\,\rd^2\vp(x^k|v^k))(w)= {\liminf}_{k\to\infty} \, \rd^2\vp(x^k|v^k)(w^k).       \]
    Notice that the set convergence $L_k \to \lin(T_{\pvpd}(\bar\lambda))$ implies $\Pi_{L_k} \to \Pi_{\lin(T_{\pvpd}(\bar\lambda)}$. Hence, by continuity, we may assume $ \|\Pi_{L_k}(\revise{\D F(x^k)}w^k)\|>\epsilon$ for all $k \in \mathbb N$. Next, let us set $p^k={\Pi_{L_k}(\revise{\D F(x^k)}w^k)}/{\|\Pi_{L_k}(\revise{\D F(x^k)}w^k)\|}$. Then, due to $p^k\in L_k\subseteq \lin(T_{\pvpd}(\lambda^k))$ and $F(x^k)\in \partial\sigma_{\pvpd}^*(\lambda^k) = N_{\pvpd}(\lambda^k)$, it holds that $\langle p^k,F(x^k) \rangle = 0$ and $\langle p^k, \revise{\D F(x^k)}w^k\rangle=\|\Pi_{L_k}(\revise{\D F(x^k)}w^k) \|>\epsilon$. Applying \cref{lemma5-10} to $x^k$, $\lambda^k$, $w^k$ and $p^k$, we have %=\frac{\Pi_{L_k}(\D F(x^k)w^k)}{\|\Pi_{L_k}(\D F(x^k)w^k)\|}$, we have:
    \[    \rd^2\vp(x^k|\revise{\D F(x^k)^\top}\lambda^k)(w^k)\geq  \frac{ \|\Pi_{L_k}(\revise{\D F(x^k)}w^k)\|^2 }{ 2M\|F(x^k)\|}+\langle w^k,H(x^k,\lambda^k)w^k\rangle                                    \]
    for all $k$. Consequently, taking the limit and using $F(x^k) \to F(\bar x) = 0$, we obtain:
    \begin{align}
        \label{thm3-3eq2}
        ({\eliminf}_{k\to\infty}\,\rd^2\vp(x^k|v^k))(w)=\infty.   
    \end{align}
    Therefore, \cref{thm3-3eq} follows from \cref{thm3-3eq1} and \cref{thm3-3eq2}. Under assumption \ref{B1}, it holds that $\partial \vp(x^k) = \revise{\D F(x^k)^\top}\partial \sigma_{\pvpd}(F(x^k))$ (for all $k$ sufficiently large), cf. \cite[Proposition 3.5]{ouyang2023partI}. Thus, the required properties of $\{\lambda^k\}_k$ and the conditions (i) and (ii) are ensured under \ref{B1} and \ref{B2}. Condition \cref{cond_sosub1} then follows from \cref{thm3-3eq}.
\end{proof}

%------------------------------------------------------------------------------------------
% EQUIVALENCE
%------------------------------------------------------------------------------------------

\subsection{Verifying \texorpdfstring{\cref{cond_sosub2}}{(2.8)}} \label{sec:sosub2}
As illustrated in \cref{thm3-3}, the geometric requirements \ref{B2} on $\pvpd$ allowed us to establish an appropriate lower bound for the second subderivative of $\vp$. In order to verify \cref{cond_sosub2}, the generalized quadratic structure of the second subderivative---as stated in \cref{prop2-10}---will play a more important role. Since the function $\vp$ is often $C^2$-strictly decomposable everywhere \cite{milzarek2016numerical,ouyang2023partI}, we plan to use \cite[Lemma 5.3.27]{milzarek2016numerical} and \cite[Theorem 3.11]{ouyang2023partI} to connect the generalized quadratic structure of $\rd^2\vp$ to the strict complementarity condition. 

Moreover, to verify \cref{cond_sosub2}, we only need to find a specific sequence $\{(x^k,v^k)\}_k$ rather than controlling all possible sequences as in \cref{cond_sosub1}. %Therefore, the conditions formulated in the $\sotp$-property are not required to hold on a neighborhood of $\bar \lambda$.
%there is no need to make the geometric assumption on a neighborhood of $\bar\lambda$. We only impose \ref{B2} at $\bar\lambda$.
\revise{As the normal cone $N_{\pvpd}(\bar\lambda)$ and the smallest exposed face $G_{\mathrm{ex}}({\bar\lambda})$ of $\pvpd$ containing $\bar\lambda$ will be utilized frequently in this subsection, we introduce the notations $\cN:=N_{\pvpd}(\bar\lambda)$ and $\cG:=G_{\mathrm{ex}}({\bar\lambda})$.}

\begin{assum} \revise{In addition to \ref{B1},} we consider \revise{the condition:
    \begin{enumerate}[label=\textup{\textrm{(B.\arabic*)}},topsep=0pt,itemsep=0.5ex,partopsep=0ex,start=3]  
            \item \label{B3}  For every pair $(x,\lambda)$ in a neighborhood of $(\bar x,\bar\lambda)$ satisfying $F(x)\in \ri(\cN)$ and  $\lambda\in \ri(\cG)$, the proximity operator $\proxs$ is differentiable at $ x+\tau \D F(x)^\top\lambda$. 
            % \item \label{B4} There exists $\bar M > 0$ such that for all $v\in \lin(T_{\pvpd}(\bar\lambda))$ with $\|v\|=1$, we have $\dist(0,T_{\pvpd}^2(\bar\lambda,v))< \bar M$.
        \end{enumerate}
        }
    \end{assum}
    \begin{rem}
    \label{remark_B3}
     \revise{According to \cref{lem:exposed}, for every $x$ sufficiently close to $\bar x$ with $F(x)\in \ri(\cN)$, it holds that $\partial\sigma_\pvpd(F(x))=\cG$. Combining \cite[Proposition 3.5 (i)]{ouyang2023partI} with \cite[Theorem 6.6]{rockafellar1970convex}, the condition $\lambda\in \ri(\cG)$ implies $\D F(x)^\top\lambda \in \ri(\partial \vp(x))$. Therefore, }assumption \ref{B3} holds if $\vp$ is \revise{$C^2$-fully decomposable or $C^2$-partly smooth} \revise{at those $x$ in a neighborhood of $\bar x$ with $F(x)\in \ri(\cN)$}, cf. \cite[Theorem 3.11]{ouyang2023partI} \revise{and \cite{DanHarMal06,HuTiaPanWen23}}.
    \end{rem} 

 By \cite[Proposition 3.4(ii)]{ouyang2023partI}, we know that the strict constraint qualification $\revise{\D F(\bar x)}\R^n-\cN=\R^m$ (see \cref{eq:cq-strict}) is stable on a neighborhood of $\bar x$.
%The strict constraint qualification says that $\D F(\bar x)\R^n-\cN=\R^m$, by a similar argument to , we can show that this is stable on a neighborhood of $\bar x$.
\begin{prop}
    \label{prop3-4}
   Let condition \ref{B1} hold. Then there is a neighborhood $U$ of $\bar x$ such that for all $x\in U$ it holds that $\D F(x)\R^n-\cN=\R^m$. 
\end{prop}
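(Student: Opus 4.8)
The plan is to read the assertion as a robustness (stability) statement for the Robinson-type constraint qualification \cref{eq:cq-strict}. The decisive observation is that the convex cone $\cN = N_{\pvpd}(\bar\lambda)$ is \emph{fixed} (it does not depend on $x$), while the only $x$-dependent object, $DF(x)$, depends continuously on $x$ since $F$ is $C^2$ on a neighborhood of $\bar x$. Thus one expects the surjectivity $DF(x)\R^n - \cN = \R^m$ to persist under small perturbations of $x$. This is, in essence, \cite[Proposition 3.4(ii)]{ouyang2023partI} (cf.\ the discussion preceding the statement), so one option is to invoke that result directly; I also sketch a short self-contained argument below.

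First I would pass to the dual side. For any matrix $A \in \R^{m\times n}$ the set $A\R^n - \cN$ is a convex cone, and the elementary polarity calculus for convex cones gives $(A\R^n - \cN)^\circ = (A\R^n)^\circ \cap (-\cN)^\circ = \ker(A^\top) \cap (-\cN^\circ)$. Moreover a convex cone $K \subseteq \R^m$ with $K^\circ = \{0\}$ is dense, hence equal to $\R^m$; equivalently, $A\R^n - \cN \neq \R^m$ forces $\ker(A^\top) \cap (-\cN^\circ) \neq \{0\}$. Specializing to $x = \bar x$, condition \cref{eq:cq-strict} yields $\ker(DF(\bar x)^\top) \cap (-\cN^\circ) = \{0\}$.

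The main step is a compactness/contradiction argument on the dual side. Assume no neighborhood of $\bar x$ has the claimed property; then there are $x_k \to \bar x$ with $DF(x_k)\R^n - \cN \neq \R^m$, so by the previous paragraph we may choose $p_k \in \ker(DF(x_k)^\top) \cap (-\cN^\circ)$ with $\|p_k\| = 1$, i.e.\ $\iprod{p_k}{DF(x_k)u - w} \le 0$ for all $u \in \R^n$ and all $w \in \cN$. Passing to a subsequence with $p_k \to p$, $\|p\| = 1$, and letting $k \to \infty$ with $u$ and $w$ held fixed (using $DF(x_k) \to DF(\bar x)$), we obtain $\iprod{p}{DF(\bar x)u - w} \le 0$ for all $u \in \R^n$, $w \in \cN$, that is, $p \in (DF(\bar x)\R^n - \cN)^\circ$. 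But by \cref{eq:cq-strict} this polar cone is $\{0\}$, contradicting $\|p\| = 1$. Hence a neighborhood $U$ of $\bar x$ with the desired property exists.

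I do not anticipate a genuine obstacle; the one point requiring care is that one should \emph{not} attempt to pass to a limit of the subspaces $DF(x_k)\R^n$, since $x \mapsto DF(x)\R^n$ need not be continuous in the Painlev\'e--Kuratowski sense (the rank of $DF(x)$ may jump at $\bar x$). Working with the polar cones circumvents this, because all the vectors $p_k$ lie in the single fixed closed cone $-\cN^\circ$, which is precisely what allows the limiting inequality to be transferred to $\bar x$. The remaining ingredients — continuity of $DF$ near $\bar x$, the polarity identities for convex cones, and the fact that a convex cone with trivial polar equals $\R^m$ — are all standard.
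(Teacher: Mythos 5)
Your proposal is correct, and it takes a different route in presentation from the paper: the paper offers no self-contained argument at all for this proposition, deferring entirely to Proposition 3.4(ii) of the Part I companion paper, whereas you supply an elementary proof from scratch. Your dual reduction is sound: since $DF(x)\R^n-\cN$ is a convex cone containing $0$, the bipolar theorem converts $DF(x)\R^n-\cN=\R^m$ into $\ker(DF(x)^\top)\cap(-\cN)^\circ=\{0\}$ (and the step ``dense convex cone equals $\R^m$'' is legitimate because a convex set and its closure have the same relative interior), and the compactness/contradiction step works precisely because every dual vector $p_k$ lies in the single fixed closed cone $(-\cN)^\circ$ while $DF(x_k)\to DF(\bar x)$ by the smoothness of $F$ built into \ref{B1}; the hypothesis \cref{eq:cq-strict} at $\bar x$ then kills the limit $p$. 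Your caution about not attempting a Painlev\'e--Kuratowski limit of the subspaces $DF(x_k)\R^n$ is exactly the right one, since the rank of $DF$ can drop at $\bar x$. In short, the paper's citation buys brevity and consistency with Part I; your argument buys a transparent, self-contained proof using only polarity calculus, continuity of $DF$, and compactness of the unit sphere.
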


To verify \cref{cond_sosub2}, we need to select a proper sequence $(x^k,v^k)\to(\bar x,\bar v)$ to satisfy the conditions listed in \cref{prop2-10}. We start with the choice of $\{x^k\}_k$.
\begin{prop}
    \label{nprop3-5}
    Let \ref{B1} hold. There exists $\{x^k\}_k$ with $x^k\to\bar x$, $F(x^k)\in \ri(\cN)$, $\partial\vp(x^k)=\revise{\D F(x^k)^\top}\cG$ (for all $k$), and $\revise{\D F(x^k)^{-1}}\aff(\cN)\to \revise{\D F(\bar x)^{-1}}\aff(\cN)$.
\end{prop}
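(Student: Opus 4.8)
\textbf{Proof proposal for \Cref{nprop3-5}.}
The plan is to build the sequence $\{x^k\}_k$ from a suitable interior displacement of $\bar x$ and then use the stability of the strict constraint qualification (\cref{prop3-4}) together with the chain rule for $C^2$-strictly decomposable functions to obtain the three listed properties. First I would observe that $F(\bar x) = 0 \in \cN = N_\pvpd(\bar\lambda)$ and, since $\cN$ is a nonempty closed convex cone, its relative interior $\ri(\cN)$ is nonempty and the rays from $0$ into $\ri(\cN)$ are precisely $\ri(\cN)$ itself (a cone is relatively open along such rays). Using \cref{prop3-4}, pick a fixed direction $d$ with $DF(\bar x)d \in \ri(\cN)$; solvability of $DF(\bar x)\R^n - \cN = \R^m$ guarantees that such $d$ exists because $\ri(\cN)$ is reached in the range of $DF(\bar x)$ up to a $\cN$-shift, and by taking the component in $\aff(\cN)$ one can arrange $DF(\bar x)d$ itself to lie in $\ri(\cN)$. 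Then set $x^k := \bar x + t_k d$ for a sequence $t_k \downarrow 0$; by $C^2$-smoothness of $F$ and $F(\bar x)=0$ we get $F(x^k) = t_k\, DF(\bar x)d + O(t_k^2)$, which lies in $\ri(\cN)$ for all $k$ large since $\ri(\cN)$ is relatively open and the $O(t_k^2)$ error is eventually negligible relative to the $t_k$-scaled interior point (one must check the error stays inside $\aff(\cN)$; if not, project $d$ onto $DF(\bar x)^{-1}\aff(\cN)$ first and absorb the correction, or invoke \cref{prop3-4} at $x^k$ directly to realize a point of $\ri(\cN)$ exactly in $\range(DF(x^k))$ and take its preimage).

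Next I would establish $\partial\vp(x^k) = DF(x^k)^\top \cG$. Since $F(x^k) \in \ri(\cN) = \ri(N_\pvpd(\bar\lambda))$, \Cref{lem:exposed}(ii) gives $\partial\sigma_\pvpd(F(x^k)) = G_{\mathrm{ex}}(\bar\lambda) = \cG$. By \ref{B1} the decomposition $\vp = \sigma_\pvpd \circ F$ holds on a neighborhood of $\bar x$, so for $k$ large $x^k$ lies in that neighborhood, and by \cite[Proposition 3.5]{ouyang2023partI} (the subdifferential chain rule under the decomposability constraint qualification, whose stability is exactly \cref{prop3-4}) we get $\partial\vp(x^k) = DF(x^k)^\top\partial\sigma_\pvpd(F(x^k)) = DF(x^k)^\top\cG$, as claimed.

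Finally, the convergence $DF(x^k)^{-1}\aff(\cN) \to DF(\bar x)^{-1}\aff(\cN)$ in the Painlev\'e--Kuratowski sense. Here $DF(x^k)^{-1}\aff(\cN)$ denotes the preimage affine subspace $\{w : DF(x^k)w \in \aff(\cN)\}$. Since $\aff(\cN)$ is a fixed affine subspace (indeed a linear subspace, as $\cN$ is a cone containing $0$), this preimage is a linear subspace determined by the linear map $DF(x^k)$; by \ref{B1}, $F \in C^2$, hence $DF(x^k) \to DF(\bar x)$, and under the surjectivity-type condition from \cref{prop3-4} the dimension of the preimage subspace is locally constant, so the preimage subspaces vary continuously and converge to $DF(\bar x)^{-1}\aff(\cN)$. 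I would make this precise via the identity $DF(x)^{-1}\aff(\cN) = \aff(DF(x)^{-1}\cN)$ from \cite[Lemma 3.22]{ouyang2023partI} (valid because $DF(x)\R^n - \cN = \R^m$ on $U$ by \cref{prop3-4}), combined with outer/inner-limit estimates for preimages of a fixed closed convex set under converging surjective-modulo-$\cN$ linear maps.

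\textbf{Main obstacle.} The delicate point is the simultaneous requirement $F(x^k) \in \ri(\cN)$ \emph{exactly} (not just asymptotically) together with $x^k \to \bar x$: the naive choice $x^k = \bar x + t_k d$ produces $F(x^k)$ only up to an $O(t_k^2)$ perturbation that need not land in $\aff(\cN)$, so one cannot immediately conclude membership in $\ri(\cN)$. Resolving this cleanly is where I expect the real work: one either shows the second-order term is automatically tangent to $\aff(\cN)$ (unlikely in general) or, more robustly, uses \cref{prop3-4} at each $x$ near $\bar x$ to select a point of $\ri(\cN)$ lying in $\range(DF(x))$ and defines $x^k$ as a bounded preimage, then checks $x^k \to \bar x$ by a continuity/open-mapping argument. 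The affine-subspace convergence in the last bullet is comparatively routine once \cref{prop3-4} and \cite[Lemma 3.22]{ouyang2023partI} are in hand.
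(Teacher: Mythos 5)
Your second and third steps are essentially the paper's: the representation $\partial\vp(x^k)=DF(x^k)^\top\cG$ is obtained there exactly as you propose, from \cite[Proposition 3.5]{ouyang2023partI} together with \cref{lem:exposed}(ii), and the subspace convergence (your kernel-of-surjective-maps argument, using the stability of the strict constraint qualification from \cref{prop3-4}) is sound. The paper, however, does not construct $\{x^k\}_k$ by hand: the existence of $x^k\to\bar x$ with $F(x^k)\in\ri(\cN)$ and $DF(x^k)^{-1}\aff(\cN)\to DF(\bar x)^{-1}\aff(\cN)$ is simply quoted from \cite[Proposition 3.9]{ouyang2023partI}, where that construction is carried out.

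The genuine gap is exactly the point you flag as the ``main obstacle'' and then leave open: exact membership $F(x^k)\in\ri(\cN)$. With $x^k=\bar x+t_kd$ the $O(t_k^2)$ error can have a component transverse to $\aff(\cN)$, and since $\ri(\cN)$ is only relatively open this destroys membership, so the naive choice indeed fails in general. Your fallback does not repair it: selecting a point of $\ri(\cN)$ in $\range(DF(x))$ and taking a ``bounded preimage'' only controls the linearized value $DF(x)w$, whereas the claim concerns the nonlinear value $F(x^k)$; no open-mapping argument applied to the linear map $DF(x)$ by itself produces a point with $F(x^k)\in\ri(\cN)$. A correct completion must force the second-order error to stay inside $\aff(\cN)$: the strict constraint qualification gives $DF(\bar x)\R^n+\aff(\cN)=\R^m$, so $\Pi_{\aff(\cN)^\perp}\circ F$ is a submersion at $\bar x$ and $\{x:F(x)\in\aff(\cN)\}$ is a $C^2$ manifold near $\bar x$ whose tangent space at $\bar x$ is $DF(\bar x)^{-1}\aff(\cN)\ni d$ (with $DF(\bar x)d\in\ri(\cN)$, which exists since $\range(DF(\bar x))-\ri(\cN)=\R^m$). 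Choosing a curve $x(t)$ in this manifold with $x(0)=\bar x$, $\dot x(0)=d$ (equivalently, correcting $\bar x+t d$ by an $O(t^2)$ term solving $\Pi_{\aff(\cN)^\perp}F(x)=0$ via the implicit function theorem) yields $F(x(t))=t\,DF(\bar x)d+O(t^2)$ with the error now lying in $\aff(\cN)$; by positive homogeneity of the cone $\cN$, the relative ball in $\aff(\cN)$ of radius $ct$ around $t\,DF(\bar x)d$ is contained in $\ri(\cN)$, so $F(x(t))\in\ri(\cN)$ for all small $t>0$, and $x^k:=x(t_k)$ works. Without this step --- or without citing \cite[Proposition 3.9]{ouyang2023partI} as the paper does --- the existence claim is not established.
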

\begin{proof} This follows from \cite[Proposition 3.9]{ouyang2023partI}. The representation $\partial \vp(x^k)=\revise{\D F(x^k)^\top}\cG$ is due to \cite[Proposition 3.5 (i)]{ouyang2023partI} and \cref{lem:exposed} (ii).
%     The other results except $\partial \vp(x^k)=\D F(x^k)^\top\cG$, which is a result of \cref{lem:exposed} (ii), are due to \cite[Proposition 3.8]{ouyang2023partI}.
\end{proof}

Our next task is to give an exact expression of $\rd^2\vp(x^k|\revise{\D F(x^k)^\top}\lambda^k)$ (on the linear subspace $\revise{\D F(x^k)^{-1}}\aff(\cN)$) to calculate the epi-limit.

\begin{prop}
    \label{prop2-30}
    Let \ref{B1} be satisfied. Then, there exists a neighborhood $U$ of $\bar x$ such that for all $x\in U$ with $F(x)\in\ri(\cN)$ and $\lambda\in\ri(\cG)$, it holds that 
    \[  \rd^2\vp(x|\revise{\D F(x)^\top}\lambda)(w)=\langle w, H(x,\lambda)w \rangle, \quad \forall~w\in \revise{\D F(x)^{-1}}\aff(\cN).  \]
\end{prop}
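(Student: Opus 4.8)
The plan is to bound $\rd^2\vp(x|v)(w)$ from both sides, where $v:=DF(x)^\top\lambda$ and $w\in DF(x)^{-1}\aff(\cN)$; the lower bound is routine and the upper bound is the crux. First I would shrink $U$ so that, for every $x\in U$: the local decomposition $\vp=\vp(\bar x)+\sigma_\pvpd\circ F$ is valid on $U$; the strict constraint qualification $DF(x)\R^n-\cN=\R^m$ holds (\cref{prop3-4}); and $\partial\vp(x)=DF(x)^\top\partial\sigma_\pvpd(F(x))$ (\cite[Proposition 3.5]{ouyang2023partI}). Since $\cN=N_\pvpd(\bar\lambda)$ is a cone through the origin, $-\cN\subseteq\spa(\cN)=\aff(\cN)$, so the strict CQ upgrades to the surjectivity relation
\[ DF(x)\R^n+\aff(\cN)=\R^m \qquad \text{for all } x\in U. \]
Now fix $x\in U$ with $F(x)\in\ri(\cN)$ and $\lambda\in\ri(\cG)$. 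By \cref{lem:exposed}(ii), $\partial\sigma_\pvpd(F(x))=G_{\mathrm{ex}}(\bar\lambda)=\cG$, hence $\lambda\in\cG$ is an admissible multiplier: $v=DF(x)^\top\lambda\in\partial\vp(x)$ and $\sigma_\pvpd(F(x))=\langle\lambda,F(x)\rangle$.

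For the lower bound I would argue directly. For $\tilde w$ near $w$ and small $t>0$ one has $\vp(x+t\tilde w)=\vp(\bar x)+\sigma_\pvpd(F(x+t\tilde w))\ge\vp(\bar x)+\langle\lambda,F(x+t\tilde w)\rangle$ and $\vp(x)=\vp(\bar x)+\langle\lambda,F(x)\rangle$; using $\langle v,\tilde w\rangle=\langle\lambda,DF(x)\tilde w\rangle$ and a second-order Taylor expansion of $F$ at $x$,
\[ \Delta_t^2\vp(x|v)(\tilde w)\ \ge\ \frac{\langle\lambda,\,F(x+t\tilde w)-F(x)-tDF(x)\tilde w\rangle}{\tfrac12 t^2}\ =\ \langle\tilde w,H(x,\lambda)\tilde w\rangle+o(1) \]
uniformly as $\tilde w\to w$, so $\rd^2\vp(x|v)(w)\ge\langle w,H(x,\lambda)w\rangle$. (Alternatively, invoke \cite[Corollary 4.3]{benko2022second} together with $\rd^2\sigma_\pvpd\ge0$, exactly as in the proof of \cref{thm3-3}.)

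For the upper bound I would build a \emph{second-order feasible path} inside the level set $\cA:=\{x'\in\R^n: F(x')\in\aff(\cN)\}=\{x': \Pi_{\aff(\cN)^\perp}F(x')=0\}$. The map $g:=\Pi_{\aff(\cN)^\perp}\circ F$ is $C^2$ near $x$ with $g(x)=0$, and the surjectivity relation above says $Dg(x)=\Pi_{\aff(\cN)^\perp}DF(x)$ maps onto $\aff(\cN)^\perp$; by the submersion/implicit function theorem $\cA$ is a $C^2$ submanifold near $x$ with tangent space $\ker Dg(x)=DF(x)^{-1}\aff(\cN)$. Hence for the given $w\in DF(x)^{-1}\aff(\cN)$ there is a $C^2$ curve $\gamma$ with $\gamma(0)=x$, $\gamma'(0)=w$, $\gamma(t)\in U$, and $F(\gamma(t))\in\aff(\cN)$ for all small $t$; put $\tilde w_t:=(\gamma(t)-x)/t\to w$. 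Since $F(\gamma(t))\to F(x)\in\ri(\cN)$ and $\ri(\cN)$ is relatively open in $\aff(\cN)$, we get $F(\gamma(t))\in\ri(\cN)=\ri(N_\pvpd(\bar\lambda))$ for small $t$, so \cref{lem:exposed}(ii) gives $\partial\sigma_\pvpd(F(\gamma(t)))=\cG\ni\lambda$ and therefore $\sigma_\pvpd(F(\gamma(t)))=\langle\lambda,F(\gamma(t))\rangle$. Substituting $x+t\tilde w_t=\gamma(t)$ into the difference quotient, the linear terms cancel exactly and the second-order Taylor expansion of $F$ leaves
\[ \Delta_t^2\vp(x|v)(\tilde w_t)\ =\ \frac{\langle\lambda,\,F(\gamma(t))-F(x)-tDF(x)\tilde w_t\rangle}{\tfrac12 t^2}\ =\ \langle\tilde w_t,H(x,\lambda)\tilde w_t\rangle+o(1)\ \longrightarrow\ \langle w,H(x,\lambda)w\rangle, \]
whence $\rd^2\vp(x|v)(w)\le\langle w,H(x,\lambda)w\rangle$; together with the lower bound this proves the identity.

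I expect the main obstacle to be this upper-bound step: it is essential to use the \emph{stability} of the strict constraint qualification (\cref{prop3-4}) to derive $DF(x)\R^n+\aff(\cN)=\R^m$, and then to verify that the feasible path $\gamma$ does not merely stay in $\aff(\cN)$ but in fact enters $\ri(\cN)$, so that along it $\sigma_\pvpd$ coincides with the linear functional $\langle\lambda,\cdot\rangle$ and the difference quotient collapses to the curvature term $\langle\cdot,H(x,\lambda)\cdot\rangle$; the Taylor bookkeeping and the cancellation of the first-order terms are then routine.
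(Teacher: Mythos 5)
Your proposal is correct, and its overall architecture coincides with the paper's: the lower bound via the linear minorant $\sigma_{\pvpd}(\cdot)\geq\langle\lambda,\cdot\rangle$ (equivalently \cite[Corollary 4.3]{benko2022second}), and the upper bound by producing a sequence $x+t_kw^k$ along which $F$ stays in a region where $\sigma_{\pvpd}$ agrees with the linear functional $\langle\lambda,\cdot\rangle$, so that the difference quotient collapses to $\langle\cdot,H(x,\lambda)\cdot\rangle$. The difference lies in how that feasible sequence is obtained and how linearity is certified. The paper works with $K=\{x':F(x')\in\cN\}$ and invokes \cite[Corollary 2.91]{bonnans2013perturbation} (Robinson's CQ holds by \cref{prop3-4}) to get $T_K(x)=DF(x)^{-1}T_{\cN}(F(x))=DF(x)^{-1}\aff(\cN)$, so the tangent-cone definition directly yields $t_k\downarrow0$, $w^k\to w$ with $F(x+t_kw^k)\in\cN=N_{\pvpd}(\lambda)$ (via \cref{lem:exposed}(i)), which gives $\sigma_{\pvpd}(F(x+t_kw^k))=\langle\lambda,F(x+t_kw^k)\rangle$. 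You instead apply the submersion/implicit function theorem to the smooth set $F^{-1}(\aff(\cN))$ (surjectivity of $\Pi_{\aff(\cN)^\perp}DF(x)$ follows correctly from the strict CQ), build a $C^2$ curve with velocity $w$, and then use relative openness of $\ri(\cN)$ in $\aff(\cN)$ together with \cref{lem:exposed}(ii) to see that $\sigma_{\pvpd}$ is linear ($=\langle\lambda,\cdot\rangle$) along the curve. Both arguments are sound; yours is more elementary and self-contained (no tangent-cone calculus under constraint qualifications), at the cost of the extra interiority step, while the paper's citation of the Bonnans--Shapiro formula makes the upper bound shorter and only needs $F(x+t_kw^k)\in\cN$ rather than in its relative interior.
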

\begin{proof}
    By \cref{prop3-4}, there is a neighborhood $U$ of $\bar x$ such that $\revise{\D F(x)}\R^n-\cN=\R^m$ holds for all $x\in U$. By \cite[Corollary 4.3]{benko2022second}, we again have $\rd^2\vp(x|\revise{\D F(x)^\top}\lambda)\geq \langle \cdot,H(x,\lambda)\cdot\rangle$. Hence, it suffices to prove the converse inequality. Now assume $w\in \revise{\D F(x)^{-1}}\aff(\cN)$ and define $K:=\{x\in\R^n:F(x)\in \cN\}$. Applying \cite[Corollary 2.91]{bonnans2013perturbation} and $F(x) \in \ri(\cN)$, it follows $T_K(x) = \revise{\D F(x)^{-1}}T_{\mathcal N}(F(x)) = \revise{\D F(x)^{-1}}\aff(\cN)$, see \cite[Section A.5.3]{HirLem01}. Thus, we can infer $w \in T_K(x)$ and due to $\lambda \in \ri(\cG)$ and \cref{lem:exposed} (i), it holds that $N_{\pvpd}(\lambda)=\cN$. Hence, there exist $t_k\downarrow 0$ and $w^k\to w$ such that $F(x+t_kw^k)\in\cN=N_{\pvpd}(\lambda)$.
    %
%    By a similar argument as in the proof of \cref{prop3-5}, we have $w\in T_{K}(x)$ and there exists $t_k\downarrow 0$ and $w^k\to w$ such that $F(x+t_kw^k)\in\cN=N_{\pvpd}(\lambda)$. 
By the definition of the second subderivative, this yields:
    \begin{align*}
        &\hspace{-6ex}\rd^2\vp(x|\revise{\D F(x)^\top}\lambda)(w)\leq \lim_{k\to\infty}
        \Delta_{t_k}^2  \;\! (\sigma_{\pvpd}\circ F)(x|\revise{\D F(x)^\top}\lambda)(w^k) \\
        %\frac{\sigma_{\pvpd}(F(x+t_kw^k))-\sigma_{\pvpd}(F(x))-t_k\langle \D F(x)^\top\lambda, w^k\rangle}{\frac{1}{2}t_k^2} \\
        &=\lim_{k\to\infty}\frac{\langle \lambda,F(x+t_kw^k)-F(x)-t_k\revise{\D F(x)}w^k\rangle}{\frac{1}{2}t_k^2}=\langle w,H(x,\lambda),w\rangle.
    \end{align*}      
\end{proof}
% \begin{proof}
%     It is clear that:
%     \[   d^2\vp(x|\D F(x)^\top\lambda)(w)\geq \langle w, \D^2F(x)(\lambda)w \rangle, ~~\forall w\in \D F(x)^{-1}(\aff(N_{\pvpd}(\lambda))),             \]
%     We apply \cref{prop1-9} for any $h\in \D F(x)^{-1}\aff(N_{\pvpd}(\lambda))$ to get a curve $\eta:[0,\delta]\to\R^n$ with $\xi'(0)=h$ such that:
%     \[   N_{\pvpd}(\bar\lambda)\ni F(\eta(s))=F(x)+s\D F(x)h+\frac{s^2}{2}w(s), \quad \|w(s)\|\leq M_1.        \]
%     Taking $s$ sufficiently small such that $F(\eta(s))\in \ri(N_{\pvpd}(\lambda))$. Then we have:
%     \[    \vp_d(F(\eta(s)))=\langle \lambda,F(\eta(s)) \rangle.                      \]
%     Therefore:
%     \begin{align*}
%     &\frac{\vp_d(F(\eta(s)))-\vp(F(x))-s\langle DF^\top(x)\lambda,\eta(s)\rangle}{\frac{1}{2}s^2}=\frac{\langle \lambda,F(\eta(s))-F(x) \rangle-s\langle DF^\top(x)\lambda,\eta(s)\rangle}{\frac{1}{2}s^2}\\
%     &=\frac{\langle \lambda,F(\eta(s))-F(x)-\D F(x)\eta(s)\rangle }{\frac{1}{2}s^2} \to \langle h, \D^2F(x)(\lambda)h \rangle, \quad s\to 0.
%     \end{align*}
%     In particular, we have:
%     \[ \forall h\in \D F(x)^{-1}\aff(N_{\pvpd}(\lambda)),\quad  d^2 \vp(x|\D F(x)^\top\lambda)(h)=\langle h, \D^2F(x)(\lambda) h\rangle.         \]
%     The twice epi-differentiability comes from that, for any choice of $s_k\to0$, we can choose $h^k=\frac{\xi(s_k)-x}{s_k}$, then $x+s_kh^k=\xi(s_k)$ and we have the convergence:
%     \[  \frac{\vp(\xi(s_k))-\vp(x)-s_k\langle h^k,\D F(x)^\top \lambda \rangle}{\frac{1}{2}s_k^2}\to   d^2 \vp(x|\D F(x)^\top\lambda)(h).                \]
% \end{proof} 

We now combine the different propositions to show one of our main results. % of this subsection.

\begin{thm}
    \label{thm2-32}
    \revise{Assume \ref{B1}--\ref{B3}}. Then, condition \cref{cond_sosub2} is satisfied.  
\end{thm}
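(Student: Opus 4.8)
The plan is to produce an \emph{explicit} admissible sequence $(x^k,v^k)\to(\bar x,\bar v)$ and then sandwich the epi-limit of $\rd^2\vp(x^k|v^k)$ between a lower bound supplied by \cref{thm3-3} and an upper bound supplied by \cref{prop2-30}. The only degree of freedom is the choice of sequence, and the role of \ref{B3} and \ref{B4} is precisely to make a single choice serve both bounds at once.

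First I would build the sequence. Invoke \cref{nprop3-5} to obtain $\{x^k\}_k$ with $x^k\to\bar x$, $F(x^k)\in\ri(\cN)$, $\partial\vp(x^k)=DF(x^k)^\top\cG$, and $DF(x^k)^{-1}\aff(\cN)\to DF(\bar x)^{-1}\aff(\cN)$. Since \ref{B4} in particular forces $T^2_{\pvpd}(\bar\lambda,p)\neq\emptyset$ for every $p\in\lin(T_{\pvpd}(\bar\lambda))$, \cref{prop2-27} produces $\{\lambda^k\}_k\subseteq\ri(\cG)$ with $\lambda^k\to\bar\lambda$ and $\dist(0,T^2_{\pvpd}(\lambda^k,p))\leq 2\dist(0,T^2_{\pvpd}(\bar\lambda,p))$ for all $p\in\lin(T_{\pvpd}(\bar\lambda))$. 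Put $v^k:=DF(x^k)^\top\lambda^k$; then $v^k\to DF(\bar x)^\top\bar\lambda=\bar v$, and since $\lambda^k\in\ri(\cG)$ and $\ri(DF(x^k)^\top\cG)=DF(x^k)^\top\ri(\cG)$ by \cite[Theorem 6.6]{rockafellar1970convex}, we have $v^k\in\ri(\partial\vp(x^k))$. Hence \ref{B3} applies: $\proxs$ is differentiable at $z^k:=x^k+\tau v^k$, and \cite[Lemma 3.13]{ouyang2023partI} (applied to the constant sequence $z^k\to z^k$) then shows that $\vp$ is twice epi-differentiable at $x^k$ for $v^k$ and that $\rd^2\vp(x^k|v^k)$ is generalized quadratic. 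Two of the three requirements of \cref{cond_sosub2} are now in place.

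For the lower bound I would verify the hypotheses of \cref{thm3-3} for this data. From \cref{nprop3-5}, $v^k=DF(x^k)^\top\lambda^k\in\partial\vp(x^k)$; from $F(x^k)\in\ri(\cN)$ and \cref{lem:exposed}~(ii), $\partial\sigma_{\pvpd}(F(x^k))=\cG\ni\lambda^k$; and from $\lambda^k\in\ri(\cG)$ and \cref{lem:exposed}~(i), $T_{\pvpd}(\lambda^k)=T_{\pvpd}(\bar\lambda)$, so the constant choice $L_k:=\lin(T_{\pvpd}(\bar\lambda))$ satisfies condition~(i). Condition~(ii) holds with $M:=2\bar M$ because $\dist(0,T^2_{\pvpd}(\lambda^k,p))\leq 2\dist(0,T^2_{\pvpd}(\bar\lambda,p))<2\bar M$ for $\|p\|=1$, by \cref{prop2-27} and \ref{B4}. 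Thus \cref{thm3-3} yields $\eliminf_{k\to\infty}\rd^2\vp(x^k|v^k)\geq\langle\cdot,H(\bar x,\bar\lambda)\cdot\rangle+\iota_{\aff(DF(\bar x)^{-1}\cN)}$, which on $\affS$ coincides with $\langle\cdot,Q\cdot\rangle$ (recall $Q=\Pi_{\affS}H(\bar x,\bar\lambda)\Pi_{\affS}$ and $\Pi_{\affS}$ fixes $\affS$) and equals $+\infty$ off $\affS$.

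For the matching upper bound, fix $w$: if $w\notin\affS$ nothing is needed, while if $w\in\affS=DF(\bar x)^{-1}\aff(\cN)$ the convergence $DF(x^k)^{-1}\aff(\cN)\to DF(\bar x)^{-1}\aff(\cN)$ lets me choose $w^k\in DF(x^k)^{-1}\aff(\cN)$ with $w^k\to w$, and for $k$ large \cref{prop2-30} gives $\rd^2\vp(x^k|v^k)(w^k)=\langle w^k,H(x^k,\lambda^k)w^k\rangle\to\langle w,H(\bar x,\bar\lambda)w\rangle=\langle w,Qw\rangle$. Combining the two bounds, the epi-limit of $\rd^2\vp(x^k|v^k)$ exists at every $w$ and equals $\langle w,Qw\rangle+\iota_{\affS}(w)$, which is exactly \cref{cond_sosub2}. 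I expect the main obstacle to be the bookkeeping of the construction rather than any single deep estimate: one must arrange $\{x^k\}_k$ and $\{\lambda^k\}_k$ so that \emph{all} hypotheses hold simultaneously --- $v^k\in\ri(\partial\vp(x^k))$ for \ref{B3}, the uniform second-order curvature bound~(ii) of \cref{thm3-3} (this is where the merely pointwise $\sotp$-type assumption \ref{B4} is transported to the nearby points $\lambda^k\in\ri(\cG)$ through \cref{prop2-27}), and the subspace convergence that feeds \cref{prop2-30}.
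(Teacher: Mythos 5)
Your proposal is correct and follows essentially the same route as the paper's proof: the same sequence built from \cref{nprop3-5} and \cref{prop2-27}, the same use of \ref{B3} (via the relative-interior argument and Part I) to get twice epi-differentiability and generalized quadratic structure, the lower bound from \cref{thm3-3} with constant subspaces $L_k=\lin(T_{\pvpd}(\bar\lambda))$ and $M=2\bar M$, and the upper bound from \cref{prop2-30} along recovery directions $w^k\in DF(x^k)^{-1}\aff(\cN)$. The only cosmetic differences are the Part~I citation used for the pointwise generalized quadratic structure and that you deduce epi-convergence directly from the two-sided bounds instead of first extracting a subsequence with $D\mathrm{prox}_{\tau\varphi}(z^k)$ convergent, both of which are fine.
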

\begin{proof}
   \revise{ Let $\{x^k\}_k$ be given as in \cref{nprop3-5}, and select $\ri(\cG)\ni \lambda^k\to \bar\lambda$}. 
   %The two conditions $\lambda^k\in\ri(\cG)$ and $\partial\vp(x^k)=\revise{\D F(x^k)^\top}\cG$ imply $v^k:=\revise{\D F(x^k)^\top}\lambda^k\in\ri(\partial\vp(x^k))$ by \cite[Theorem 6.6]{rockafellar1970convex}. 
   \revise{Then, by \ref{B3} and \cite[Proposition 2.2 and 2.3]{ouyang2023partI}}, $\proxs$ is differentiable at $z^k=x^k+\tau v^k$, \revise{$v^k = \D F(x^k)^\top\lambda^k$} with $0\preceq \revise{\D\proxs(z^k)}\preceq\frac{1}{1-\tau\rho} I$. Taking a subsequence if necessary, we may assume $\revise{\D\proxs(z^k)}\to R$ for some $0\preceq R\preceq\frac{1}{1-\tau\rho} I$. Applying \cite[Proposition 2.2]{ouyang2023partI}, we know that $\vp$ is twice epi-differentiable at $x^k$ for $v^k$, $\rd^2\vp(x^k|v^k)$ is generalized quadratic and converges epigraphically. Noticing $T_{\pvpd}(\lambda^k) = T_{\pvpd}(\bar\lambda)$ \revise{and using \ref{B2} and \cref{thm3-3},} we have
    \begin{align}
        \label{thm3-8eq1}
      {\eliminf}_{k\to\infty}\,\rd^2\vp(x^k|v^k)\geq \langle \cdot, H(\bar x,\bar \lambda)\cdot\rangle+\iota_{\aff(\revise{\D F(\bar x)^{-1}}\cN)}.         
    \end{align}
    As before, by \cite[Lemma 3.22]{ouyang2023partI}, it holds that $\aff(\revise{\D F(\bar x)^{-1}}\cN) = \revise{\D F(\bar x)^{-1}}\aff(\cN)$.
    Let $w\in\revise{\D F(\bar x)^{-1}}\aff(\cN)$ be arbitrary. By \cref{nprop3-5}, we can choose $w^k\to w$ with $w^k\in \revise{\D F(x^k)^{-1}}\aff(\cN)$. Applying \cref{prop2-30}, we obtain
    \begin{align}
        \label{thm3-8eq2}
        ({\eliminf}_{k\to\infty}\,\rd^2\vp(x^k|v^k))(w)\leq{\lim}_{k\to\infty}\,\rd^2\vp(x^k|v^k)(w^k)=\langle w,H(\bar x,\bar\lambda)w\rangle.  
    \end{align}
     Condition \cref{cond_sosub2} then follows from \cref{thm3-8eq1} and \cref{thm3-8eq2} and by noticing that the sequence $\{\rd^2\vp(x^k|v^k)\}_k$ epi-converges. 
\end{proof}

\revise{In \cref{fig:new}, we provide an overview of our results and of the different algebraic, variational, and geometric conditions and their connections to the $\SSOSC$ and \cref{cond_gj}.}

\begin{figure}[t]
\tikzstyle{block} = [rectangle, draw, fill=gray!3, minimum width=11cm, minimum height=3em]
\tikzstyle{line} = [draw, thick, -latex']

\centering
{\small
\begin{tikzpicture}
\draw [draw, very thick, -latex'] (1,0)--(1,-1.5);
\draw [draw, very thick, -latex'] (-1,-2)--(-1,-0.5);
\node [block] (init) at (0,0) {$\langle h,[\nabla^2f(\bar x)+Q]h\rangle\geq \sigma\|h\|^2 \quad \forall~h\in \aff(S)$};
\node at (-4.5,0) {SSOSC:};
\node [block] (identify) at (0,-2) {$D\nabla^2f(\bar x)D+{\tau}^{-1} D(I-D)\succeq \sigma D^2 \quad \forall~D\in\partial\proxs(\bar z)$};
\node[right] at (1.2,-0.8) {Properties \cref{cond1_gj} / \cref{weakcond1_gj}:}; 
\node[right] at (1.2,-1.2) {\cref{ssoscimcondgj} / \cref{prop3-5}};
\node[left] at (-1.2,-0.8) {Property \cref{cond2_gj}:}; 
\node[left] at (-1.2,-1.2) {\cref{condgjimssosc}};
%\path [line] (init) -- (identify);
\draw [draw=black] (-6.4,-3) rectangle (6.4,0.75);
\node[right] at (2,-2.75) {{\footnotesize Base assumptions: \ref{A1}--\ref{A2}}};
\draw [draw=white,fill=gray!3] (-6.4,-4.25) rectangle (-3.0,-3.25);
\draw [draw=black] (-6.4,-5.55) rectangle (-3.0,-3.25);
\draw [draw=black] (-6.4,-4.25) -- (-3.0,-4.25);
\draw [draw=black,densely dotted] (-6.4,-4.9)-- (-3.0,-4.9);
\draw [draw=white,fill=gray!3] (-1.7,-4.55) rectangle (1.7,-3.25);
\draw [draw=black] (-1.55,-5.55) rectangle (1.55,-3.25);
\draw [draw=black] (-1.55,-4.25) -- (1.55,-4.25);
\draw [draw=black,densely dotted] (-1.55,-4.9)-- (1.55,-4.9);
%\draw [draw=black] (-1.7,-5.5) rectangle (1.7,-4.25);
\draw [draw=white,fill=gray!3] (3.0,-5.55) rectangle (6.4,-3.25);
\draw [draw=black] (3.0,-5.55) rectangle (6.4,-3.25);
\draw [draw=black] (3.0,-4.25) -- (6.4,-4.25);
\draw [draw=black,densely dotted] (3.0,-4.9)-- (6.4,-4.9);
%\draw [draw=black] (3.0,-5.5) rectangle (6.4,-4.25);
\node at (-4.7,-3.6) {\footnotesize Geometric Conditions};
\node at (-4.7,-3.9) {\footnotesize Decomposability};
\node at (-4.7,-4.6) {\ref{B1}, \ref{B2}};
\node at (-4.7,-5.2) {\ref{B1}, \ref{B2}, \ref{B3}};
\node at (0,-3.6) {\footnotesize Variational};
\node at (0,-3.9) {\footnotesize Conditions};
\node at (0,-4.6) {\cref{cond_sosub1}};
\node at (0,-5.2) {\cref{cond_sosub2}};
\node at (4.7,-3.6) {\footnotesize Algebraic};
\node at (4.7,-3.9) {\footnotesize Conditions};
\node at (4.7,-4.6) {\cref{weakcond1_gj}};
\node at (4.7,-5.2) {\cref{cond2_gj}};
\draw [draw, very thick, -latex'] (-3,-4.6)--(-1.55,-4.6);
\draw [draw, very thick, -latex'] (-3,-5.2)--(-1.55,-5.2);
\draw [draw, very thick, -latex'] (1.55,-4.6)--(3,-4.6);
\draw [draw, very thick, -latex'] (1.55,-5.2)--(3,-5.2);
\node [above] at (-2.275,-4.5) {\scriptsize \textcolor{blue}{Thm.}~\ref{thm3-3}};
\node [below] at (-2.275,-5.3) {\scriptsize \textcolor{blue}{Thm.}~\ref{thm2-32}};
\node [above] at (2.275,-4.5) {\scriptsize \textcolor{blue}{Prop.}~\ref{proposition:v1-p3}};
\node [below] at (2.275,-5.3) {\scriptsize \textcolor{blue}{Prop.}~\ref{prop2-10}};
\end{tikzpicture}
}
\caption{\revise{Diagram illustrating the relationships between the assumptions and conclusions in this work. The algebraic conditions \cref{cond1_gj} / \cref{weakcond1_gj} and \cref{cond2_gj} enable us to establish the equivalence between the $\SSOSC$ and second-order condition \cref{cond_gj}. These two properties are implied by the variational conditions \cref{cond_sosub1} and \cref{cond_sosub2}, which can be further derived using the geometric assumptions \ref{B1}, \ref{B2}, and \ref{B3} that leverage the decomposable structure of the function $\vp$.} \label{fig:new}}
\end{figure}
%------------------------------------------------------------------------------------------
% OTHER CONDITIONS
%------------------------------------------------------------------------------------------

\section{Equivalence between Other Conditions}
\label{sec:eq_ssosc}
In this section, we investigate the equivalence between the $\SSOSC$ and other variational concepts for \cref{prob1}. Specifically, we show that the strong second-order sufficient condition, the strong metric regularity of the subdifferential, the normal-map, the natural residual, and the uniform invertibility of the generalized derivatives in $\mathcal M_{\mathrm{nat}}^\tau(\bar x)$ and $\mathcal M_{\mathrm{nor}}^\tau(\bar z)$ are all equivalent under the setting studied in \Cref{sec:eq_ssonc}. Our aim is to prove the following theorem.
\begin{thm}
    \label{thm4-1}
    Let the assumptions \ref{B1}--\ref{B3} hold. Then all of the following statements are equivalent:
\begin{enumerate}[label=\textup{\textrm{(\roman*)}},topsep=0pt,itemsep=0ex,partopsep=0ex]
        \item There is a constant $\sigma_1>0$ such that
        \[ \langle  h, (\nabla^2f(\bar x)+Q) h \rangle \geq \sigma_1\|h\|^2 \quad \forall~h\in \aff(S) = \affS. \] 
        \item There are neighborhoods $U$ of $\bar x$ and $V$ of $0$ such that for every $v\in V$ there is a point $u\in U\cap (\partial\psi)^{-1}(v)$ such that:
        \[\forall~x\in U,\quad  \psi(x)\geq \psi(u)+\iprod{v}{x-u}+\frac{\sigma_2}{2}\|x-u\|^2.
            \]     
        \item The subdifferential $\partial \psi$ is strongly metrically regular at $\bar x$ for $0$ and the necessary condition $\mathrm{d}^2\psi(\bar x|0)(h) \geq 0$ is satisfied for all $h \in \Rn$. 
        \item The normal map $\oFnor$ is strongly metrically regular at $\bar z$ for $0$ and the second-order necessary condition $\mathrm{d}^2\psi(\bar x|0)(h) \geq 0$ holds for all $h \in \Rn$. 
        \item The second-order necessary condition $\mathrm{d}^2\psi(\bar x|0)(h) \geq 0$ holds for all $h \in \Rn$ and every matrix $M \in \mathcal M_{\mathrm{nor}}^\tau(\bar z)$ is invertible.
        \item The strong second-order necessary condition $\langle h,(\nabla^2f(\bar x)+Q)h\rangle \geq 0$ holds for all $h \in \affS$ and $M=\nabla^2f(\bar x)D+\frac1\tau (I-D)$ is invertible for all $D\in\partial_B \proxs(\bar z)$.
        \item There is a constant $\sigma_3>0$ such that:
        \[  \forall~D\in\partial_B\proxs(\bar z), \quad D\nabla^2f(\bar x)D+\tau^{-1} D(I-D)\succeq \sigma_3D^2. \]
        \item There is a constant $\sigma_4>0$ such that:
        \[  \forall~D\in\partial\proxs(\bar z), \quad D\nabla^2f(\bar x)D+\tau^{-1} D(I-D)\succeq \sigma_4D^2. \]
        \item There is a neighborhood $U$ of $\bar z$ and $\sigma_5>0$ such that:
        \[  \forall~z\in U, \, \forall~D\in\partial\proxs(z), \quad D\nabla^2f(\proxs(z))D+\tau^{-1} D(I-D)\succeq \sigma_5D^2.                         \]
    \end{enumerate}
    Each of the conditions \textup{\textrm{(i)}}--\textup{\textrm{(ix)}} implies the following statement---which is further equivalent to \textup{\textrm{(i)}}--\textup{\textrm{(ix)}} provided that $I-\tau\nabla^2f(\bar x)$ is invertible:
    \begin{itemize}
        \item[\textup{\textrm{(\rnum{10})}}] The natural residual $\oFnat$ is strongly metrically regular at $\bar x$ for $0$ and the necessary condition $\mathrm{d}^2\psi(\bar x|0)(h) \geq 0$ is satisfied for all $h \in \Rn$. 
    \end{itemize}
\end{thm}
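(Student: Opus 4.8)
The plan is to arrange the ten statements around two hubs -- the $\SSOSC$ (i) and strong metric regularity of $\partial\psi$ (iii) -- and to prove the equivalences in three blocks: an \emph{algebraic block} (i)$\Leftrightarrow$(vi)$\Leftrightarrow$(vii)$\Leftrightarrow$(viii), resting entirely on \Cref{sec:ssonc,sec:eq_ssonc}; a \emph{metric-regularity block} linking (iii), (iv), (v), (x) through Clarke's inverse function theorem and proximal changes of variables; and a \emph{bridge} (ii)$\Leftrightarrow$(iii) together with the reconstruction (ii)$\Rightarrow$(i). First comes the bookkeeping: under \ref{B1}--\ref{B3}, \cref{thm3-3} yields \cref{cond_sosub1}, hence \cref{weakcond1_gj}, hence \cref{cond1_gj} by \cref{prop3-5}; and \cref{thm2-32} (through \cref{prop2-10}) yields \cref{cond_sosub2}, hence \cref{cond2_gj}. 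Moreover, the matrix $\bar D:=\Pi_{\affS}(I+\tau Q)^{-1}\Pi_{\affS}$ produced inside the proof of \cref{thm2-32} arises as a limit $\lim_k D\proxs(z^k)$ with $\proxs$ differentiable at $z^k\to\bar z$, so in fact $\bar D\in\partial_B\proxs(\bar z)$; this sharpened form of \cref{cond2_gj} will be used repeatedly.

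\textbf{Algebraic block.} With \cref{cond1_gj} and \cref{cond2_gj} in force, \cref{ssoscimcondgj,condgjimssosc} give (i)$\Leftrightarrow$(viii); since $\partial_B\proxs(\bar z)\subseteq\partial\proxs(\bar z)$, (viii)$\Rightarrow$(vii) is trivial, and (vii)$\Rightarrow$(i) follows by applying the inequality in (vii) to $\bar D$ and reusing the computation in the proof of \cref{condgjimssosc}. For (vi) I would use the elementary observation that, for $D=\Pi_{\cR(D)}(I+\tau A)^{-1}\Pi_{\cR(D)}$ as in \cref{cond1_gj}, the matrix $M=\nabla^2 f(\bar x)D+\tfrac1\tau(I-D)$ is nonsingular if and only if $\Pi_{\cR(D)}\nabla^2 f(\bar x)\Pi_{\cR(D)}+A$ is nonsingular on $\cR(D)$; combined with $A\succeq\Pi_{\cR(D)}Q\Pi_{\cR(D)}$ and $\cR(D)\subseteq\affS$, this shows that (i) forces nonsingularity for every such $D$, while conversely the strong necessary condition (positive semidefiniteness of $h\mapsto\iprod{h}{(\nabla^2 f(\bar x)+Q)h}$ on $\affS$) together with nonsingularity at $\bar D$ -- where $\cR(\bar D)=\affS$ and $A=Q$ -- upgrades positive semidefiniteness to strict positivity on $\affS$, i.e.\ (i). Finally, (ix)$\Rightarrow$(viii) is the specialisation $z=\bar z$; the invertibility parts of (v), (vi) follow from (viii)/(i) via the CD-regularity argument already spelled out after \cref{cond_gj} (valid once \cref{cond1_gj} is known); and $\rd^2\psi(\bar x|0)(h)\ge 0$ as well as $\iprod{h}{(\nabla^2 f(\bar x)+Q)h}\ge 0$ on $\affS$ are immediate from \cref{eq:psi-gcf}.

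\textbf{Metric-regularity block.} From (viii) one obtains CD-regularity of $\oFnor$ at $\bar z$, hence (Clarke's inverse function theorem, \cite{clarke1990optimization}) $\oFnor$ is a Lipschitz local homeomorphism at $\bar z$, hence strongly metrically regular there; together with $\SSOSC\Rightarrow\rd^2\psi(\bar x|0)\ge 0$ this gives (i)$\Rightarrow$(iv), and since (v) asserts nonsingularity of all $M\in\mathcal M_{\mathrm{nor}}^\tau(\bar z)=\partial\oFnor(\bar z)$, Clarke's theorem also yields (v)$\Rightarrow$(iv). The equivalences (iii)$\Leftrightarrow$(iv) and -- under the extra hypothesis that $I-\tau\nabla^2 f(\bar x)$ is invertible -- (iii)$\Leftrightarrow$(x) I would obtain by transferring the single-valued Lipschitz localization of the inverse along the locally bi-Lipschitz changes of variables $z\mapsto\proxs(z)$ and $x\mapsto x-\oFnat(x)$, using $\oFnor(z)=w\Leftrightarrow w\in\partial\psi(\proxs(z))$ together with $z=\proxs(z)+\tau(w-\nabla f(\proxs(z)))$, and the analogous identity for the natural residual -- where linearising $\nabla f$ introduces the factor $I-\tau\nabla^2 f(\bar x)$, which is exactly why that matrix must be invertible for (x). The bridge (ii)$\Leftrightarrow$(iii) is the stable-second-order-growth / tilt-stability / metric-regularity-of-the-subdifferential equivalence of \cite{DruMorNhg14} (and \cite{artacho2008characterization} in the convex case): $\psi=f+\vp$ is prox-regular and subdifferentially continuous at $\bar x$ for $0$ by \ref{A1}, the uniform growth in (ii) is the stable quadratic-growth condition, and the necessary condition $\rd^2\psi(\bar x|0)\ge 0$ in (iii) fixes the correct sign. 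Together with (i)$\Rightarrow$(iv)$\Rightarrow$(iii)$\Rightarrow$(ii), only the reconstruction (ii)$\Rightarrow$(i) remains.

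\textbf{Reconstructing (i), and (ix).} This is the heart of the argument: taking $v=0$ in (ii) only yields $\rd^2\psi(\bar x|0)(h)\ge\sigma_2\|h\|^2$ for $h$ in the critical cone $S$, i.e.\ the \emph{ordinary} second-order sufficient condition, and the uniformity over tilts $v$ must be exploited to carry the bound onto all of $\affS$. Mimicking \Cref{sec:sosub2}, I would use \cref{prop3-4,nprop3-5,prop2-27} to select $x^k\to\bar x$ and $\lambda^k\in\ri(\cG)$ with $\lambda^k\to\bar\lambda$, $F(x^k)\in\ri(\cN)$, $\partial\vp(x^k)=DF(x^k)^\top\cG$, and $DF(x^k)^{-1}\aff(\cN)\to DF(\bar x)^{-1}\aff(\cN)=\affS$; then $v^k:=\nabla f(x^k)+DF(x^k)^\top\lambda^k\to 0$ and $x^k\in(\partial\psi)^{-1}(v^k)$, and since (ii) -- invoked via the single-valued localization supplied by (iii) -- forces $\psi(x)\ge\psi(x^k)+\iprod{v^k}{x-x^k}+\tfrac{\sigma_2}{2}\|x-x^k\|^2$ near $x^k$, we obtain $\rd^2\psi(x^k|v^k)(w)\ge\sigma_2\|w\|^2$ for all $w$; by \cref{prop2-30} and \cite[Exercise 13.18]{rockafellar2009variational} this reads $\iprod{w}{(\nabla^2 f(x^k)+Q^k)w}\ge\sigma_2\|w\|^2$ on $DF(x^k)^{-1}\aff(\cN)$ with $Q^k=\Pi_{\affS^k}H(x^k,\lambda^k)\Pi_{\affS^k}$, and passing to the limit (picking $w^k\to w$ in $DF(x^k)^{-1}\aff(\cN)$ for each $w\in\affS$, and using $Q^k\to Q$, $\nabla^2 f(x^k)\to\nabla^2 f(\bar x)$) delivers $\iprod{w}{(\nabla^2 f(\bar x)+Q)w}\ge\sigma_2\|w\|^2$ on $\affS$, i.e.\ (i). Finally, (viii)$\Rightarrow$(ix) follows from robustness: (i) amounts to positive definiteness of the combined second subderivative, i.e.\ tilt stability of $\bar x$, which persists with a uniform modulus at critical points near $\bar x$ where \ref{B1}--\ref{B3} continue to hold (\cref{prop3-4} and the stability of strict decomposability), combined with outer semicontinuity of $z\mapsto\partial\proxs(z)$ to obtain the neighbourhood inequality. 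I expect the main obstacle to be precisely (ii)$\Rightarrow$(i) -- controlling the perturbed second subderivatives $\rd^2\psi(x^k|v^k)$ and their epi-limits uniformly enough (which is exactly where the $\sotp$-type geometry of $\pvpd$ and the machinery of \Cref{sec:sosub2} are indispensable) to transport the growth constant to the limit -- with the careful verification that strong metric regularity transfers with genuinely single-valued localizations across the three changes of variables relating $\partial\psi$, $\oFnor$, $\oFnat$, and the outer-semicontinuity step behind (viii)$\Rightarrow$(ix), a close second.
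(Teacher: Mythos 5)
Your overall architecture mirrors the paper's (algebraic block from \Cref{sec:ssonc,sec:eq_ssonc}, Clarke's inverse function theorem for the normal map, the \cite{DruMorNhg14} bridge to tilt stability, and the reconstruction (ii)\,$\Rightarrow$\,(i) via \cref{nprop3-5,prop2-27,prop2-30}), and several of your variations are sound: the observation that $\bar D=\Pi_{\affS}(I+\tau Q)^{-1}\Pi_{\affS}$ arises as $\lim_k D\proxs(z^k)$ in the proof of \cref{thm2-32} and hence lies in $\partial_B\proxs(\bar z)$ is a legitimate alternative to the paper's extreme-point argument for \cref{cond2_gj}; your direct algebraic treatment of (vi) (nonsingularity of $M$ iff nonsingularity of $\Pi_{\cR(D)}\nabla^2 f(\bar x)\Pi_{\cR(D)}+A$ on $\cR(D)$, then PSD $+$ nonsingular at $\bar D$ $\Rightarrow$ PD on $\affS$) is correct; and proving (iii)\,$\Rightarrow$\,(iv) directly through the correspondence $z=x+\tau(w-\nabla f(x))$, $x=\proxs(z)$ is fine, whereas the paper only proves (iv)\,$\Rightarrow$\,(iii) and closes the loop through the big cycle. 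However, there are two genuine gaps. The first is (viii)\,$\Rightarrow$\,(ix). Your proposed mechanism -- tilt stability ``persisting with a uniform modulus at critical points near $\bar x$ where \ref{B1}--\ref{B3} continue to hold'' -- does not address what (ix) actually asserts: the inequality must hold for \emph{every} $z$ in a neighborhood of $\bar z$ and every $D\in\partial\proxs(z)$, and such $z$ sweep out the whole graph of $\partial\vp$ near $(\bar x,\bar v)$, not critical points of $\psi$; moreover \ref{B1}--\ref{B3} are hypotheses at $\bar x$ only, and nothing in the assumptions gives strict decomposability (or any of the Section~3 machinery) at nearby points. Outer semicontinuity and local boundedness of $\partial\proxs$, which you do mention, only reduce the problem to perturbing $(D,B)$ around elements of $\partial\proxs(\bar z)$ and $\nabla^2 f(\bar x)$; the missing and nontrivial ingredient is the quantitative stability of the inequality $DBD+\tau^{-1}D(I-D)\succeq\sigma D^2$ under such perturbations with a degraded constant (the paper's \cref{prop3-7}, proved in \cref{app:invert}). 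This is delicate precisely because $D$ may be singular and the inequality is weighted by $D^2$: the new small positive eigenvalues of $\tilde D$ beyond the rank of $D$ have to be absorbed by the blow-up of $\tau^{-1}(\tilde\lambda^{-1}-1)$, which requires an eigendecomposition argument, not a soft semicontinuity one.

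The second gap concerns (x). The theorem requires that \emph{each} of (i)--(ix) implies (x) without any extra hypothesis; the invertibility of $I-\tau\nabla^2 f(\bar x)$ is only needed for the converse. Your plan obtains (x) solely via the change of variables $G(x)=x-\tau\nabla f(x)$, i.e.\ only under that invertibility, so the unconditional implication is not covered. The paper instead derives (v)\,$\Rightarrow$\,(x) from CD-regularity of $\oFnat$ at $\bar x$: invertibility of all elements of $\mathcal M_{\mathrm{nat}}^\tau(\bar x)=\tau\,\mathcal M_{\mathrm{nor}}^\tau(\bar z)^\top$, the inclusion $\partial\oFnat(\bar x)h\subseteq\mathcal M_{\mathrm{nat}}^\tau(\bar x)h$, and Clarke's inverse function theorem -- no condition on $I-\tau\nabla^2 f(\bar x)$ is used. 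You would need to add an argument of this type (your bi-Lipschitz transfer is fine for the reverse direction (x)\,$\Rightarrow$\,(iv), where the diffeomorphism $G$ is legitimately available).
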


Condition (i) is the $\SSOSC$ \cref{ssosc}. Condition (ii) is an equivalent characterization of tilt-stability of the local minimizer $\bar x$. It is further equivalent to a strong second-order condition based on Mordukhovich's coderivative, see \cite[Theorem 4.6 (ii)]{DruMorNhg14}. We refer to \cite{PolRoc98,DruLew13,DruMorNhg14} for more background and additional relations. Conditions (iii), (iv), and (x) connect the $\SSOSC$ to the strong metric regularity of the subdifferential, the normal map, and the natural residual. Following the discussion after \cref{cond_gj}, invertibility of the matrices $M \in \mathcal M_{\mathrm{nor}}^\tau(\bar z)$ in condition (v) is equivalent to the CD-regularity of $\oFnor$ at $\bar z$. Due to $\mathcal M_{\mathrm{nat}}^\tau(\bar x) = \tau \mathcal M_{\mathrm{nor}}^\tau(\bar z)^\top$, condition (v) can also be equivalently expressed using $\mathcal M_{\mathrm{nat}}^\tau(\bar x)$ and the natural residual $\oFnat$. In particular, 
noticing $\partial\oFnat(\bar x)h \subseteq \mathcal M_{\mathrm{nat}}^\tau(\bar x)h$, $h \in \Rn$, (see \cite{clarke1990optimization}), condition (v) implies CD-regularity of $\oFnat$ at $\bar x$. (If $I-\tau\nabla^2f(\bar x)$ is invertible, then it follows $\mathcal M_{\mathrm{nat}}^\tau(\bar x) = \partial\oFnat(\bar x)$ and (v) is fully equivalent to CD-regularity of $\oFnat$, cf. \cite[Lemma 1]{chan2008constraint}). Condition (vi) is a BD-regularity-type analogue of (v).
%Conditions (v) and (vi) are the BD-regularity like condition and CD-regularity like condition of $\oFnor$, since they are not really $\partial_B\oFnor(\bar z)$ and $\partial\oFnor(\bar z)$, but an easy-calculated alternative of them, and hence widely used in the semismooth Newton method \cite{milzarek2014semismooth,ouyang2021trust,PatSteBem14}. 
Conditions (vii) and (viii) are the Bouligand and Clarke variant of the second-order condition \cref{cond_gj} that was initially proposed in \cite{ouyang2021trust}. The generalized version (ix) of (viii) was used in \cite{ouyang2021trust} to prove global-to-local transition and superlinear convergence of a trust region-based semismooth Newton method. 
%is essential in proving the CG algorithm in \cite{ouyang2021trust} would return a high accuracy solution eventually and hence the local superlinear convergence.  

We note that when $\psi$ is convex, then $\rd^2\psi(\bar x|0)$ is convex due to \cite[Theorem 13.20]{rockafellar2009variational}, and the strong second-order necessary condition holds automatically. Therefore, in the convex case, we obtain  equivalence between BD- and CD-regularity. In \cite{chan2008constraint}, similar results have been established \revise{for a KKT-based stationary measure} for convex semidefinite programming problems. 

We continue with two counterexamples before starting with the proof of \cref{thm4-1}. First, one might wonder whether the strong second-order necessary condition in (vi) can be relaxed to the regular second-order necessary condition. The following counterexample provides a negative answer. %This illustrates the fact that, even if one can replace Clarke's Jacobian by the Bouligand subdifferential in condition (viii), a similar relaxation is not possible in conditions (v) and (vi).
\begin{example}
    \label{exam4-2}
   We set $f(x) :=\frac{1}{2}\iprod{x}{Ax}$ and $\vp(x) :=\iota_{\R^2_+}(x)$, where $a_{11} = a_{12} = a_{21} = 2$ and $a_{22} = 1$.
     %   A=\begin{pmatrix}
     %       2 & 2 \\
     %       2 & 1
     %   \end{pmatrix}.$
    As \revise{a polyhedral function}, $\vp$ is clearly $C^2$-fully decomposable at all $x \in \R_+^2$, see \cite{shapiro2003class,milzarek2016numerical}. Defining $\bar x :=(0,0)^\top$, we have $\rd^2\psi(\bar x|0)(w)=\iprod{w}{Aw}+\iota_{\R_+^2}(w)$ for all $w$.     
    It is not hard to see that the second-order necessary condition is satisfied at $\bar x$. We may further set $\tau = 1$, $\bar z=\bar x-\tau\nabla f(\bar x) = \bar x$, and $\bar\lambda=0$. Elementary calculus shows $\partial_B\Pi_{\R_+^2}(\bar z)=\{I,B_1,B_2,0\}$, where $B_1= \mathrm{diag}(1,0)$ and $B_2 = \mathrm{diag}(0,1)$.
        %\begin{pmatrix}
        %    1 & 0\\
        %    0 & 0
        %\end{pmatrix},\quad B_2=\begin{pmatrix}
        %    0 & 0\\
        %    0 & 1
        %\end{pmatrix}.
    %\end{align*}
    The set $\{AD+I-D:D\in\partial_B\Pi_{\R_+^2}(\bar z)\}$ contains the following four matrices:
    \begin{align*}
       A,\quad  \begin{pmatrix}
            2 & 0 \\
            2 & 1
        \end{pmatrix},\quad  \begin{pmatrix}
            1 & 2 \\
            0 & 1
        \end{pmatrix},\quad I.
    \end{align*}
    They are all invertible. But the CD-regularity-type condition \textup{\textrm{(v)}} fails to hold, since $\aff(\R^2_+) = \R^2$ and $A$ is not positive definite.
\end{example} 

Next, we present an example to demonstrate that the additional invertibility of $I-\tau\nabla^2f(\bar x)$ is necessary to establish equivalence between (\rnum{10}) and the other conditions.
\begin{example}
    We consider $f(x) := \frac12\iprod{x}{Ax}$ and $\vp(x) := \iota_K(x)$, where $A := \mathrm{diag}(2,-1)$ and $K :=\{x \in \R^2: x_1\geq |x_2|\}$.
    %\begin{align*}
    %    A=\begin{pmatrix}
    %        2 & 0 \\
    %        0 & -1
    %    \end{pmatrix} \quad \text{and} \quad K=\{x \in \R^2: x_1\geq |x_2|\}.
    %\end{align*}
    As in \cref{exam4-2}, $\vp$ is  $C^2$-fully decomposable at all $x \in K$ and for $\bar x = (0,0)^\top$ we obtain $\rd^2\psi(\bar x|0)(w)=\iprod{w}{Aw}+\iota_{K}(w)$ for all $w$.
   % It is clear that $\vp$ is $C^2$-fully decomposable at $\bar x=(0,0)$, and we have
   % \[     .                   \]
   As before, the second-order necessary condition $\rd^2\psi(\bar x|0)(h) \geq 0$ is satisfied at $\bar x$ for all $h \in \R^2$. Moreover,  the projection onto $K$ can be computed via:  
    \begin{align*}  
    \begin{array}{llll}
    \Pi_{K}(x)= \frac{x_1+x_2}{2}(1,1)^\top & \text{if } x_2\geq |x_1|,  & \Pi_{K}(x)= \frac{x_1-x_2}{2}(1,-1)^\top & \text{if } x_2\leq -|x_1|, \\[1ex]
    \Pi_{K}(x)= (0,0)^\top & \text{if } x_1\leq -|x_2|,  & \Pi_{K}(x)= x & \text{if } x_1\geq |x_2|.
    \end{array}
    %    \Pi_{K}(x)= \begin{cases} \frac{x_1+x_2}{2}(1,1)^\top & \text{if } x_2\geq |x_1|, \\
    %        (0,0)^\top & \text{if } x_1\leq -|x_2|, \\
    %       \frac{x_1-x_2}{2}(1,-1)^\top & \text{if } x_2\leq -|x_1|, \\
    %        x & \text{if } x_1\geq |x_2|.
    %      \end{cases}  
    \end{align*}
    We now set $\tau=\frac{1}{2}$. Then, we obtain $x-\tau \nabla f(x)= x - \frac12 Ax = (0,\frac{3}{2}x_2)^\top$ and
    \begin{align*}
        \oFnat(x)=x-\Pi_K(x-\tau\nabla f(x))= \begin{pmatrix} x_1- \frac34 |x_2| & \frac14 x_2 \end{pmatrix}^\top.
        %\left\{ 
        % \begin{aligned}
         %   & (x_1-\frac{3}{4}x_2,\frac{1}{4}x_2)            & x_2 \geq 0\\ 
         %   & (x_1+\frac{3}{4}x_2,\frac{1}{4}x_2)           & x_2 \leq 0 
        % \end{aligned}   
       % \right. 
    \end{align*} 
    To show that $\oFnat$ is strongly metrically regular at $\bar x$ for $0$, we calculate its inverse, i.e., we consider the nonlinear equation $\oFnat(x)=y$. It readily follows
    %
     %We immediately see that $x_2=4t$, and they must have the same sign. Moreover, the inverse is
     \begin{align*}
        (\oFnat)^{-1}(y) = \begin{pmatrix} y_1 + 3|y_2| & 4y_2 \end{pmatrix}^\top.
%            s \\
%            t
%        \end{pmatrix}=\left\{ \begin{aligned}
%         & (s+3t,4t) &  t\geq 0 \\
%         & (s-3t,4t) & t\leq 0
 %       \end{aligned}
 %       \right.
    \end{align*}
    Therefore, $(\oFnat)^{-1}$ is single-valued and globally Lipschitz continuous (as it is piecewise linear). Consequently, by \cite[Theorem 9.43]{rockafellar2009variational}, $\oFnat$ is strongly metrically regular at $\bar x$ for $0$. However, condition \textup{(\rnum{1})} in \cref{thm4-1} does not hold, since $A$ is not positive semidefinite and $\aff(K)=\R^2$.
\end{example} 

Next, we state a preparatory result that is used to prove the implication ``(viii)\,\!$\implies$\,\!(ix)''. The proof of \cref{prop3-7} applies similar techniques as in \cite[Proposition 7.11]{ouyang2021trust} and can be found in \cref{app:invert}.
\begin{prop}
    \label{prop3-7}
    Let $D\in \bS^n_+$, $B\in \bS^n$ and $\tau, \sigma > 0$ be given with:
    \[    DBD+{\tau^{-1}}D(I-D)\succeq \sigma D^2.    \]
    Then, there are neighborhoods $U$ of $D$ and $V$ of $B$ such that for every $\tilde D\in U\cap \bS^n_+$ and $\tilde B\in V\cap \bS^n$, we have $\tilde D\tilde B\tilde D+{\tau^{-1}}\tilde D(I-\tilde D)\succeq \tfrac{\sigma}{4}\tilde D^2$.                            
\end{prop}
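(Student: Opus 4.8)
The plan is to recast the matrix inequality as a scalar estimate and then argue by contradiction, using a compactness argument together with the continuity of the matrix square root on $\bS^n_+$. First I would note that $\tilde D\tilde B\tilde D+\tau^{-1}\tilde D(I-\tilde D)\succeq\tfrac\sigma4\tilde D^2$ is equivalent to
\[ \iprod{u}{\tilde Bu}+\tau^{-1}\bigl(\iprod{h}{\tilde Dh}-\|u\|^2\bigr)\geq\tfrac\sigma4\|u\|^2\qquad\text{for all }h\in\Rn,\ u:=\tilde Dh. \]
Replacing $h$ by its projection onto $\cR(\tilde D)$ leaves every term here unchanged, so it suffices to check the inequality for $h\in\cR(\tilde D)$; and if $\iprod{h}{\tilde Dh}=0$, then $\tilde Dh=0$ and there is nothing to prove. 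Exactly the same manipulation turns the hypothesis into $\iprod{Dw}{BDw}+\tau^{-1}(\iprod{w}{Dw}-\|Dw\|^2)\geq\sigma\|Dw\|^2$ for all $w\in\Rn$.

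Suppose the conclusion fails. Then there are $\tilde D_k\to D$ in $\bS^n_+$, $\tilde B_k\to B$ in $\bS^n$, and $h_k\in\cR(\tilde D_k)$ with $\iprod{h_k}{\tilde D_kh_k}=1$ violating the scalar inequality. Setting $v_k:=\tilde D_k^{1/2}h_k$ gives $\|v_k\|=1$, so along a subsequence $v_k\to v$ with $\|v\|=1$. Since $A\mapsto A^{1/2}$ is continuous on $\bS^n_+$, we have $\tilde D_k^{1/2}\to D^{1/2}$, hence $u_k:=\tilde D_kh_k=\tilde D_k^{1/2}v_k\to D^{1/2}v=:u$; passing to the limit in the violated inequality yields $\iprod{u}{Bu}+\tau^{-1}(1-\|u\|^2)\leq\tfrac\sigma4\|u\|^2$.

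I would then decompose $v=v'+v''$ with $v'\in\cR(D)$ and $v''\in\ker(D)$, so that $u=D^{1/2}v=D^{1/2}v'$. If $v'=0$, then $u=0$ and the limiting inequality collapses to $\tau^{-1}\leq0$, contradicting $\tau>0$. If $v'\neq0$, write $v'=D^{1/2}w$ with $w\in\cR(D)$ (possible since $\cR(D^{1/2})=\cR(D)$); then $u=Dw$ and $\iprod{w}{Dw}=\|D^{1/2}w\|^2=\|v'\|^2\leq\|v\|^2=1$. Applying the rewritten hypothesis to $w$ gives $\iprod{u}{Bu}+\tau^{-1}(\iprod{w}{Dw}-\|u\|^2)\geq\sigma\|u\|^2$, and since $\iprod{w}{Dw}\leq1$ this forces $\iprod{u}{Bu}+\tau^{-1}(1-\|u\|^2)\geq\sigma\|u\|^2$. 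Combined with the limiting inequality, $\sigma\|u\|^2\leq\tfrac\sigma4\|u\|^2$, so $u=Dw=0$ and $v'=D^{1/2}w=0$ — a contradiction. This completes the argument.

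The main obstacle is that $\tilde D$ need not have the rank of $D$: it may carry small positive eigenvalues in directions where $D$ vanishes and shed eigenvalues on $\cR(D)$, so there is no fixed orthogonal block decomposition to work in, the pseudoinverses of $\tilde D_k$ are not uniformly controlled, and the $h_k$ above may be unbounded. The device that bypasses this is to pass to the bounded variable $v=\tilde D^{1/2}h$ (bounded thanks to the normalization $\iprod{h}{\tilde Dh}=1$) together with continuity of the square root; the one delicate point in the limit is that the estimate $\iprod{w}{Dw}=\|v'\|^2\leq1$ supplies exactly the slack in the $\tau^{-1}\iprod{h}{\tilde Dh}$ term needed for the hypothesis to dominate the limiting inequality.
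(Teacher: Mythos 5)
Your argument is correct, and it is genuinely different from the proof in the paper. The paper works \emph{explicitly}: it invokes the eigendecomposition perturbation lemma of Sun (\cite[Lemma 4.3]{sun2002strong}) to get decompositions of $D$ and $\tilde D$ with $\|P-\tilde P\|=\mathcal O(\|D-\tilde D\|)$, rewrites both inequalities as block conditions on the ranges, and handles the delicate case $\mathrm{rank}(\tilde D)>\mathrm{rank}(D)$ by noting that the new small eigenvalues make $\tau^{-1}(\tilde\lambda^{-1}-1)$ blow up, with the cross terms absorbed by Young's inequality. You instead argue by contradiction and compactness: the scalar reformulation, the normalization $\iprod{h_k}{\tilde D_k h_k}=1$, and the change of variables $v_k=\tilde D_k^{1/2}h_k$ (bounded, with $\tilde D_k^{1/2}\to D^{1/2}$ by continuity of the square root) let you pass to a limit without ever choosing eigenbases; the possible rank increase is handled implicitly, since any mass of the limit $v$ in $\ker(D)$ only creates slack through $\iprod{w}{Dw}=\|v'\|^2\leq 1$, which is exactly what is needed for the hypothesis to dominate the limiting inequality (and the degenerate case $v'=0$ collapses to $\tau^{-1}\leq 0$). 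All the small steps check out: the projection onto $\cR(\tilde D)$ leaves every term unchanged, a violating $h$ necessarily has $\iprod{h}{\tilde Dh}>0$ so the normalization is legitimate by degree-two homogeneity, and $u=Dw=0$ forces $v'=D^{1/2}w=0$ because $D^{1/2}$ is injective on $\cR(D)$. What each approach buys: the paper's proof is quantitative, tying the admissible perturbation size to $\|D-\tilde D\|$ and $\|B-\tilde B\|$, which is in the spirit of its other estimates; yours is softer and shorter, avoids the eigenvector perturbation machinery entirely, works verbatim for any constant strictly below $\sigma$ (so $\sigma/4$ could be replaced by $\sigma/2$ or anything $<\sigma$), but yields no explicit neighborhoods — which the proposition does not require.
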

 
We now present the proof of \cref{thm4-1}.
 
\begin{proof}[Proof of \cref{thm4-1}]
    Note that the equivalences (i)\,\!$\iff$\,\!(vii)\,\!$\iff$\,\!(viii) have been established in \Cref{sec:ssonc} and \Cref{sec:eq_ssonc}. In fact, we immediately have (i)\,\!$\iff$\,\!(viii) and (viii)\,\!$\implies$\,\!(vii). To show the remaining implication (vii)\,\!$\implies$\,\!(i), we now argue that the matrix $P = \Pi_{\affS} (I+\tau Q)^{-1}\Pi_{\affS}$ in \cref{cond2_gj} is actually an element of $\partial_B\proxs(\bar z)$ under \cref{cond1_gj}. By \cref{cond1_gj} and due to $\mathcal R(D) \subseteq \affS$, it holds that
    \[ D=\Pi_{\mathcal R(D)}(I+\tau A)^{-1}\Pi_{\mathcal R(D)} \preceq D=\Pi_{\mathcal R(D)}(I+\tau \Pi_{\mathcal R(D)}Q\Pi_{\mathcal R(D)})^{-1}\Pi_{\mathcal R(D)} \preceq P \]
    for all $D \in \partial\proxs(\bar z)$. We claim that $P$ is an extreme point of the set $\partial\proxs(\bar z) = \conv(\partial_B\proxs(\bar z))$. Let us suppose that this claim is wrong, i.e., there exist $P_1, P_2 \in \partial\proxs(\bar z)$, $P_1 \neq P_2$, and $\mu \in (0,1)$ such that $P = \mu P_1 + (1-\mu) P_2$. Then, it follows
    \[ 0 \preceq \mu(P - P_1) = (1-\mu)(P_2 - P) \preceq 0 \]
    and we can infer $P-P_1 = P_2 - P = 0$ which yields a contradiction. Hence, $P$ is an extreme point of $\partial\proxs(\bar z)$ and we have $P \in \partial_B\proxs(\bar z)$. The strategies utilized in the proof of \cref{ssoscimcondgj} are then directly applicable to complete the implication (vii)\,\!$\implies$\,\!(i). 
    %\textcolor{red}{(The direction (vii)\,\!$\implies$\,\!(viii) or (vii)\,\!$\implies$\,\!(i) seems to be missing; \cref{cond2_gj} is only formulated for $\partial\proxs(\bar z)$).} 
    We continue the proof of \cref{thm4-1} in a step-by-step fashion:

    \textbf{(\rnum{9})\,\!$\iff$\,\!(\rnum{8})}: Clearly, we have (\rnum{9})\,\!$\implies$\,\!(\rnum{8}). The direction (\rnum{8})\,\!$\implies$\,\!(\rnum{9}) follows from \cref{prop3-7}, the local Lipschitz continuity of $\proxs$, and the upper semicontinuity of $\partial\proxs$. 
    %, and \cite[?]{ouyang2023partI}.

      \textbf{(\rnum{8})\,\!$\implies$\,\!(\rnum{5})}: The second-order necessary condition in (\rnum{5}) follows from (\rnum{8})\,\!$\iff$\,\!(\rnum{1}). Invertibility of the matrices $M \in \mathcal M_{\mathrm{nor}}^\tau(\bar z)$ was verified after introducing \cref{cond_gj}. %cf. \cite[Lemma 6.3]{ouyang2021trust}.

     \textbf{(\rnum{5})\,\!$\implies$\,\!(\rnum{4})}:  As discussed, the invertibility condition in (\rnum{5}) implies CD-regularity of $\oFnor$ at $\bar z$. Thus, (\rnum{4}) follows from the classical Clarke inverse function theorem, cf. \cite[Theorem 7.1.1]{clarke1990optimization} or \cite[Theorem 4D.3]{DonRoc14}. 
     %By the basic calculus of the generalized Jacobian matrix \cite[Proposition 2.3.3 and Theorem 2.6.6]{clarke1990optimization} we see that $\partial\Fnors(\bar z)\subset\{\nabla^2f(\bar x)D+\frac1\tau (I-D):D\in\partial\proxs(\bar z)\}$. Then the result follows from the classic inverse function theorem \cite[Theorem 7.1.1]{clarke1990optimization}.

    \textbf{(\rnum{4})\,\!$\implies$\,\!(\rnum{3})}: By assumption, there exists a neighborhood $U$ of $\bar z$ and $\kappa > 0$ such that $\|\Fnors(z_1)-\Fnors(z_2)\|\geq \kappa\|z_1-z_2\|$ for all $z_1, z_2 \in U$. In addition, there is a neighborhood $V$ of $0$ such that $(\oFnor)^{-1} : V \to \Rm$ is well-defined, single-valued, and Lipschitz continuous, cf. \cite[Proposition 3G.1]{DonRoc14} and %shrinking $V$, if necessary, 
    we may assume $(\oFnor)^{-1}(V) \subseteq U$. %Let $(x,y) \in (\Rn \times V) \cap \gph(\partial \psi)$ be arbitrary. Then, we can write $y=\nabla f(x)+v$ for some $v\in\partial\vp(x)$ and setting $z = x + \tau v$, it holds that $x = \proxs(z)$ and $y = \oFnor(z)$. Using $y \in V$, this implies that $z$ is uniquely determined by $z = (\oFnor)^{-1}(y)$. Since, by assumption, $$  
     %
    %Let $U_1$ and $V_1$ sufficiently small neighborhoods of $\bar x$ and $0$ such that:
    %
    %\[    x-\bar x+\tau(y-\nabla f(x)+\nabla f(\bar x)) \subseteq   U-\bar z, \quad \forall~x\in U_1, \quad \forall~y\in V_1.               \]
    %
    Let $(x_1,y_1), (x_2,y_2) \in (\Rn \times V) \cap \gph(\partial \psi)$ be arbitrary. Then, we can write:
    %Now take arbitrary $x_1,x_2\in U_1$ with $y_1\in\partial \psi(x_1), y_2\in \partial\psi(x_2)$ with $y_1,y_2\in V_1$. Then we have:
    \[      y_1=\nabla f(x_1)+v_1, \; y_2=\nabla f(x_2)+v_2 \quad \text{for some} \quad v_1\in\partial\vp(x_1) ,\; v_2\in\partial \vp(x_2).                   \]
    Thus, setting $z_1=x_1+\tau v_1$, $z_2=x_2+\tau v_2$, we have $x_1=\proxs(z_1)$, $x_2=\proxs(z_2)$ and  $y_1 = \oFnor(z_1)$,  $y_2 = \oFnor(z_2)$. Utilizing $y_1, y_2 \in V$, this implies $z_1 = (\oFnor)^{-1}(y_1)$ and $z_2 = (\oFnor)^{-1}(y_2)$ and we can infer $z_1, z_2 \in U$. Moreover, this verifies that $(\partial\psi)^{-1}$ is single-valued on $V$, i.e., we have $(\partial\psi)^{-1}(y) = \{\proxs((\oFnor)^{-1}(y))\}$ for all $y \in V$. 
    %Moreover, it holds that $z_1-\bar z=x_1-\bar x+\tau(y_1-\nabla f(x_1)+\nabla f(\bar x))\in U$ and, similarly, $z_2\in U$. %Hence, we can infer $ \|\Fnors(z_1)-\Fnors(z_2)\| \geq \kappa\|z_1-z_2\|$. 
  %  So we have:
  %  \[   \]
    In addition, applying the Lipschitz continuity of the proximity operator, it follows
    %cf. \cite[Proposition 2.3]{ouyang2023partI}, it follows
    %
    \[ \kappa_p \kappa\|x_1-x_2\| \leq \kappa \|z_1-z_2\| \leq \|\Fnors(z_1)-\Fnors(z_2)\| = \|y_1-y_2\|, \]
    %it follows $\Fnors(z_1)=y_1$ and $\Fnors(z_2)=y_2$ and by \cite[?]{ouyang2023partI}, we have $\|z_1-z_2\|\geq (1-\tau\rho)\|x_1-x_2\|$. Hence, we can infer
    %\begin{equation*}
     %   \label{smr_partialpsi}
     %   \|y_1-y_2\| = \|\Fnors(z_1)-\Fnors(z_2)\| \geq \kappa(1-\tau\rho)\|x_1-x_2\|.
    %\end{equation*}
    %This along with the strong metric regularity $\Fnors$
    for some $\kappa_p > 0$. This establishes the strong metric regularity of $\partial\psi$ at $\bar x$ for $0$.

    \textbf{(\rnum{3})\,\!$\implies$\,\!(\rnum{2})}: The necessary condition $\mathrm{d}^2\psi(\bar x|0)(h) \geq 0$, $h \in \Rn$, implies condition (3.6) in \cite[Corollary 3.3(\rnum{1})]{DruMorNhg14}; see, e.g., \cite[Theorem 13.24]{rockafellar2009variational}. Therefore, by \cite[Corollary 3.3]{DruMorNhg14}, $\bar x$ is a local minimum of $\psi$. The conclusion then follows from \cite[Theorem 3.7]{DruMorNhg14}.

    \textbf{(\rnum{2})\,\!$\implies$\,\!(\rnum{1})}: Following the proof of \cref{thm2-32} there are sequences $x^k\to\bar x$ and $\lambda^k\to\bar\lambda$ such that:
    \[  \rd^2\vp(x^k|v^k)(w)=\langle w, H(x^k,\lambda^k)w \rangle \quad \forall~w\in \revise{\D F(x^k)^{-1}}\aff(N_{\pvpd}(\bar \lambda)), \]
    where $v^k=\revise{\D F(x^k)^\top}\lambda^k$. In particular, setting $u^k=\nabla f(x^k)+v^k$, this implies %we see that for all $$ it holds that:
    \begin{align}
        \label{thm4-1eq2}
        \rd^2\psi(x^k|u^k)(w)=\langle w, [\nabla^2f(x^k)+H(x^k,\lambda^k)] w\rangle \quad \forall~w\in \revise{\D F(x^k)^{-1}}\aff(N_{\pvpd}(\bar\lambda)).
    \end{align}
    By \cite[Theorem 3.7]{DruMorNhg14}, $(\partial\psi)^{-1}$ is single-valued around $0$. Hence, using $u^k \to \nabla f(\bar x) + \bar v = 0$ and (\rnum{2}), it holds that $(\partial\psi)^{-1}(u^k) = \{x^k\}$ and
%    Then we see that $u^k\to 0$, and we can assume that for every $k\in\mathbb{N}$, by  it holds that:
%
    \begin{align}
        \label{thm4-1eq1}
        \psi(x)\geq \psi(x^k)+\langle u^k,x-x^k\rangle+\frac{\sigma_2}{2}\|x-x^k\|^2  
    \end{align}
    for all $x$ in a neighborhood of $\bar x$ and all $k$. Let us fix $w\in \aff(S) = \revise{\D F(\bar x)^{-1}}\aff(N_{\pvpd}(\bar\lambda))$. By \cref{nprop3-5}, we can choose $w^k\to w$ with $w^k\in \revise{\D F(x^k)^{-1}}\aff(N_C(\bar\lambda))$ and by the definition of $\rd^2\psi(x^k|u^k)$, there are $w^{k,i}\to w^k$ and $t_i \downarrow 0$ such that:
    \[       \rd^2\psi(x^k|u^k)(w^k)=\lim_{i\to\infty}\frac{\psi(x^k+t_iw^{k,i})-\psi(x^k)-t_i\langle u^k,w^{k,i}\rangle}{\frac{1}{2}t_i^2}\geq \sigma_2\|w^k\|^2, \]
    where we applied \cref{thm4-1eq1}. Thus, using \cref{thm4-1eq2}, we can infer $\langle w^k, [\nabla^2f(x^k)+H(x^k,\lambda^k)] w^k\rangle \geq \sigma_2\|w^k\|^2$ for all $k$. 
   % \[                      \]
    Taking the limit $k\to\infty$, this shows that the $\SSOSC$ in (i) holds.

    \textbf{(\rnum{5})\,\!$\implies$\,\!(\rnum{6})\,\!$\implies$\,\!(i)}: By the previous steps, we have (\rnum{5})\,\!$\iff$\,\!(\rnum{1}) and hence, (\rnum{5}) implies that the strong second-order necessary condition is satisfied. The BD-regularity like condition in (vi) is clearly implied by (v). The direction (vi)\,\!$\implies$\,\!(i) is similar to the proof of \cref{ssoscimcondgj} and ``(vii)\,\!$\implies$\,\!(i)'' by using \cref{cond2_gj} and \cite[Lemma 6.3]{ouyang2021trust}.

    \textbf{(\rnum{5})\,\!$\implies$\,\!(\rnum{10})}: Noticing the CD-regularity of $\oFnat$ at $\bar x$, the proof is identical to the direction (\rnum{5})\,\!$\implies$\,\!(\rnum{4}). %by noticing that $\partial\oFnat(\bar x)\subset\{\tau D\nabla^2f(\bar x)+I-D:D\in\partial\proxs(\bar z)\}$ and $\bar z=\bar x-\tau\nabla f(\bar x)$.
    
     \textbf{(\rnum{10})\,\!$\implies$\,\!(\rnum{4})}: Suppose that (\rnum{4}) is not true. Then, we can find sequences $\{z^k_1\}_k$, $\{z^k_2\}_k$ with $z^k_1,z^k_2\to\bar z$, $z_1^k \neq z_2^k$, and $ \alpha_k\to 0$ such that:
      \be \label{eq:contra-fnor-1} \| \Fnors(z^k_1)-\Fnors(z^k_2)\|\leq  \alpha_k\|z^k_1-z^k_2\|. \ee
      %  &\frac{1}{\tau}\|z_1^k-z_2^k-\left( (x^k_1- \tau\nabla f(x^k_1))-(x^k_2-\tau\nabla f(x^k_2) ) \right) \|  \leq \alpha_k\|z^k_1-z_2^k\| , \\
     % &\|z^k_1-z_2^k-(G(x^k_1)-G(x^k_2)\|  \leq \tau  \alpha_k\|z^k_1-z_2^k\|,
 %
    We set $G(x):=x-\tau \nabla f(x)$. Since $\revise{\D G(\bar x)} = I-\tau\nabla^2f(\bar x)$ is invertible, $G$ is a local diffeomorphism at $\bar x$ satisfying $G(\bar x)=\bar z$. In particular, both $G$ and $G^{-1}$ are well-defined around $\bar x$ and $\bar z$ and we can define $x_1^k := G^{-1}(z_1^k)$ and $x_2^k := G^{-1}(z_2^k)$. (Without loss of generality, we assume that $x_1^k$ and $x_2^k$ are well-defined in this way for all $k \in \mathbb N$). The continuity of $G^{-1}$ further implies $x_1^k, x_2^k \to \bar x$ as $k \to \infty$. Moreover, utilizing the strong metric regularity of $\oFnat$ at $\bar x$ for $0$ and the continuity of $\revise{\D G}$, we may assume
    \be \label{eq:contra-fnor-2}  \|\oFnat(x^k_1)-\oFnat(x^k_2)\|\geq \kappa\|x^k_1-x^k_2\|, \quad \|G(x^k_1) - G(x^k_2)\| \leq L \|x_1^k - x_2^k\|         \ee
    for all $k$ and for suitable $\kappa, L > 0$. In addition, for all $x, z \in \Rn$ with $G(x) = z$, we can represent the normal map and natural residual as follows: 
    \[ \oFnor(z) = \tau^{-1} [G(x) - G(\proxs(z))] \quad \text{and} \quad \oFnat(x) = x - \proxs(z). \]
    %
    %where $B(x) =  \int_0^1 \nabla^2f(\proxs(G(x)+t \oFnat(x))\,\mathrm{d}t$ 
    Setting $p_1^k = \proxs(z_1^k)$ and $p^k_2 = \proxs(z^k_2)$, we then obtain
  %  
   %  \comnew{$G(x^k) = z^k$, then $\oFnat(x^k) = x^k - \proxs(G(x^k)) = x^k - \proxs(z^k) = \tau \oFnor(z^k) + x^k - z^k - \tau \nabla f(\proxs(z^k)) = \tau \oFnor(z^k) + \tau[\nabla f(x^k) - \nabla f(\proxs(z^k))] $}
    %
    \begin{align*}  \tau[\oFnor(z_1^k) -  \oFnor(z_2^k)] &= G(x_1^k)-G(x^k_2) - [G(\proxs(z^k_1)) - G(\proxs(z^k_2))] \\ & \hspace{-8ex} =  B(x_1^k,x^k_2)(x_1^k-x^k_2) - B(p^k_1,p^k_2)(p^k_1-p^k_2) \\ &\hspace{-8ex} = [B(x_1^k,x_2^k)-B(p^k_1,p^k_2))](x^k_1-x^k_2) + B(p_1^k,p^k_2)[\oFnat(x_1^k) - \oFnat(x^k_2)] \end{align*}
    where $B(y,y^\prime) =  \int_0^1 \revise{\D G(y^\prime+t (y-y^\prime))}\,\mathrm{d}t$. Noticing $\bar x = \proxs(\bar z)$, we further have $B(x_1^k,x_2^k), B(p_1^k,p_2^k) \to \revise{\D G(\bar x)} = I-\tau \nabla^2 f(\bar x)$ as $k \to \infty$. Hence, there are $\bar\sigma > 0$ and $K \in \mathbb N$ such that $\sigma_{\min}{(B(p^k_1,p^k_2))} \geq \bar \sigma$ and
    \[ \quad \| [B(x_1^k,x_2^k)-B(p^k_1,p^k_2))](x^k_1-x^k_2) \|\leq \frac{\kappa\bar\sigma}{2} \|x^k_1-x^k_2\| \]
    for all $k \geq K$ which by \cref{eq:contra-fnor-2} implies $\|\oFnor(z_1^k) -  \oFnor(z_2^k)\| \geq \frac{\kappa\bar\sigma}{2\tau} \|x_1^k-x_2^k\|$. Combining this estimate with \cref{eq:contra-fnor-1} and \cref{eq:contra-fnor-2}, we have 
    %
    %\[  
    $\frac{\kappa\bar\sigma}{2\tau} \|x_1^k-x_2^k\| \leq \|\oFnor(z_1^k) -  \oFnor(z_2^k)\| \leq \alpha_k \|z_1^k-z_2^k\| \leq L\alpha_k \|x_1^k-x_2^k\| $
    %\] 
    %
    for all $k \geq K$. Due to $\alpha_k \to 0$, this yields a contradiction and hence, (\rnum{4}) is true.
\end{proof}

\begin{rem}[Nonlinear Programming]
    We now consider an application of \cref{thm4-1} to nonlinear programming $\nlp$, 
    \[ {\min}_{x\in \R^n} f(x)+\iota_X(x), \] 
    where $X:=\{x\in \R^n:~g(x)\leq 0,~h(x)=0\}$ and $g\in C^2(\R^n,\R^m)$, $h\in C^2(\R^n,\R^p)$. Let us set $F:=(g;h)$ and let $\bar x$ be a stationary point of $f+\iota_X$ satisfying $\nabla f(\bar x)+\D F(\bar x)^\top(\bar\lambda,\bar\mu)=0$ for some $\bar\lambda\in \R^m_+$, $\bar\mu\in\R^p$. Define the set $C :=\R^{m}_+\times \R^p$. Without loss of generality, we assume that all inequality constraints are active, i.e. $g(\bar x)=0$. We further assume that the strict Robinson's constraint qualification $(\mathrm{SRCQ})$ holds:
    \begin{equation}
        \label{CQ}
          \D F(\bar x)\R^n-N_C((\bar\lambda,\bar\mu))=\D F(\bar x)\R^n-N_{\R^m_+}(\bar\lambda)\times \{0_p\}=\R^{m+p}.
    \end{equation}
    Then, $\iota_X$ is $C^2$-strictly decomposable at $\bar x$ for $(\bar\lambda,\bar\mu)$ with decomposition pair $(\sigma_{C}, F)$. Therefore, \ref{B1} is satisfied. Next, we define the index set $\mathcal I :=\{i\in \{1,\dots,m\}: \bar\lambda_i>0\}$. Reordering the indices---if necessary---we may assume $\mathcal I =\{1,\dots,r\}$. The minimal exposed face $\cG$ of $C$ generated by $(\bar\lambda,\bar\mu)$ is $\R^r_+\times \{0_{m-r}\}\times \R^p$ and the corresponding normal cone satisfies $\cN = N_C((\bar\lambda,\bar\mu)) =\{0_r\}\times \R^{m-r}_-\times \{0_p\}$. Consider an arbitrary triple $(x,\lambda,\mu)$ sufficiently close to $(\bar x,\bar\lambda,\bar\mu)$ with $F(x)\in \ri(\cN)$ and $(\lambda,\mu)\in \ri(\cG)$. The condition $F(x)\in \ri(\cN)$ implies that $g_{i}(x)<0$ for $i > r$. 
    
    We now show that the projection $\proj$ onto $X$ is differentiable at $x+\tau \D F(x)^\top(\lambda, \mu)$ for some $\tau>0$. Our idea is to verify that $\iota_X$ is $C^2$-fully decomposable at $x$. Notice that, locally around $x$, $\iota_X$ agrees with the function $\iota_{Y}$, where $Y:=\{y\in \R^n: g_{i}(y)\leq 0,~h(y)=0, 1\leq i\leq r\}$. Therefore, it suffices to verify that $\iota_Y$ is $C^2$-fully decomposable at $x$. By \cite[Proposition 3.4 (ii)]{ouyang2023partI} and \eqref{CQ}, we may assume that:
    \begin{equation}
        \label{CQstrict}
        \D F(x)\R^n-N_{\R^m_+}(\bar\lambda)\times \{0_p\}=\D F(x)\R^n-N_{\R^m_+}(\lambda)\times \{0_p\}=\R^{m+p}.
    \end{equation}
    Let us set $\tilde F:=(g_{1};\dots;g_r;h)$. In particular, we have 
      \begin{equation}
        \label{CQstrict1}
        \D \tilde F(x)\R^n=\R^{r+p}.
    \end{equation}
    This proves that $\iota_Y$ (or $\iota_X$) is $C^2$-fully decomposable function at $x$ with decomposition pair $(\sigma_{\R^r_+\times \R^p},\tilde F)$. 
    %Since $( \lambda_1,\mu)\in \ri(\R^r_+\times \R^p)$, we know by \cite[Theorem 3.11]{ouyang2023partI} that $\proj$ is differentiable at $x+\tau D\tilde F(x)^\top(\bar\lambda_1,\bar\mu)=x+\tau DF(x)^\top(\bar\lambda,\bar\mu)$. 
    Consequently, \ref{B3} holds for $\iota_X$ by \cref{remark_B3} and assumption \ref{B2} is satisfied since $\R^r_+\times \R^p$ is a polyhedral set. Moreover, applying \cite[Proposition 3.5 (ii)]{ouyang2023partI},  $\iota_X$ is also prox-regular at $\bar x$. As indicator functions are always prox-bounded, we can conclude that $\iota_X$ is globally prox-regular at $\bar x$ for any $v\in \partial \iota_X(\bar x)$. In particular, we know that \ref{A1} holds. Consequently, \cref{thm4-1} is applicable to $\nlp$ under the $(\mathrm{SRCQ})$. Let us also note that in the literature, the stronger linear independence constraint qualification $(\mathrm{LICQ})$ along with the strong second-order sufficient condition has been shown to be equivalent to the strong regularity of primal-dual solutions of certain KKT-type equations, see, e.g., \cite{bonnans2013perturbation,dontchev1996characterizations,robinson1980strongly,sun2006strong}. Our result here shows that for the normal map or the natural residual, such an equivalence can be established under the weaker condition $(\mathrm{SRCQ})$.
\end{rem}

\section{Conclusion}
\label{sec:conclu}
We study a strong second-order sufficient condition ($\SSOSC$) for composite-type problems whose nonsmooth part has a generalized conic quadratic second subderivative. The equivalence between the $\SSOSC$ and a second-order condition involving the normal map is established based on algebraic assumptions on the generalized Jacobian of the proximity operator. These structural conditions are then shown to hold for $C^2$-strictly decomposable functions under an additional mild geometric assumption on the support set $\pvpd$.
%We also give counterexamples to illustrate the necessity of these geometric assumptions. 
Finally, the equivalence between the $\SSOSC$ and other variational concepts are discussed, including CD-regularity and the strong metric regularity of the subdifferential, the normal map, and the natural residual. The developed theory is applicable to a broad and generic class of conic-type composite problems for which closed-form expressions of the corresponding proximity operator and generalized Jacobians are not necessarily known and can be utilized in the local convergence analysis of the semismooth Newton method. %While our work focuses on problems of the form \cref{prob1}  

\revise{\section*{Acknowledgments} We would like to thank the Associate Editor and the anonymous reviewer for their detailed and constructive comments, which have helped greatly to improve the quality and presentation of the manuscript.}

% \section*{Acknowledgments} We would like to thank the Associate Editor and three anonymous reviewers for their detailed and constructive comments, which have helped greatly to improve the quality and presentation of the manuscript.

%------------------------------------------------------------------------------------------
% APPENDIX
%------------------------------------------------------------------------------------------

\appendix

\section{Proof of \texorpdfstring{\cref{lem:exposed}}{Lemma 1.5}} \label{app:exposed}

\begin{proof} 
 To show (i), we note that $G_{\mathrm{ex}}(x) = \bigcap \{\partial\sigma_C(z) : x \in \partial\sigma_C(z)\}$. This automatically implies $N_{C}(x)\subseteq N_{C}(y)$. Let $v\in N_{C}(y)$, by \cite[Theorem 18.1]{rockafellar1970convex}, we know $G_{\mathrm{ex}}(x)\subset\partial\sigma_C(v)$, which proves $v\in N_C(x)$ and $N_C(x)=N_C(y)$. The equality for the tangent cone follows from $T_C(q)=N_{C}(q)^{\circ}$ for all $q\in C$. To show (ii), let us take $z\in\partial\sigma_C(y)$. By definition, we have $\langle z-x,y\rangle=\sigma_C(y)-\sigma_C(y)=0$ and $z-x\in T_C(x)=N_C(x)^{\circ}$. Consequently, it follows $z-x\in \{y\}^\perp\cap T_C(x)=N_{N_{C}(x)}(y)=\lin(T_C(x))$ by \cite[Example 6.40]{bauschke2017convex}. Then, for any $v\in N_C(x)$, we have $\langle v,x\rangle=\sigma_C(v)=\langle v,z\rangle$, which proves that $z\in \partial\sigma_C(v)$ and $z\in G_{\mathrm{ex}}(x)$. This yields $G_{\mathrm{ex}}(x)=\partial\sigma_C(y)$ since $G_{\mathrm{ex}}(x)\subseteq \partial\sigma_C(y)$ follows from the definition.    
% \comnew{By definition, we may write $G_{\mathrm{ex}}(x) = \bigcap \{\partial\sigma_C(z) : x \in \partial\sigma_C(z)\}$. Let $y$ be given with $y \in \ri(N_C(x))$. Notice, by \cite[Theorem 16.23]{bauschke2017convex}, we have $y \in N_C(x) = \partial\iota_C(x)$ if and only if $x \in \partial\sigma_C(y)$. Our goal is to show
% %
% \[ \partial \sigma_C(y) \subseteq \partial \sigma_C(z) \quad \forall~z \; \text{with} \; x \in \partial\sigma_C(z), \]
% %
% which clearly verifies $G_{\mathrm{ex}}(x) = \partial \sigma_C(y)$. Let $\lambda \in \partial \sigma_C(y)$ and $z \neq 0$ with $x \in \partial\sigma_C(z)$ be arbitrary. Thanks to $z \in N_C(x)$ and $y \in \ri(N_C(x))$, there exists $\epsilon > 0$ such that $y \pm \epsilon \frac{z}{\|z\|} \in N_C(x)$, i.e., it holds that $\iprod{y\pm\epsilon\tfrac{z}{\|z\|}}{\lambda - x} \leq 0$. This yields
% %
% \[ \pm \frac{\epsilon}{\|z\|} \iprod{z}{\lambda - x} \leq - \iprod{y}{\lambda - x} = \iprod{y}{x} - \sigma_C(y) \leq 0 \]
% %
% and we can infer $\iprod{z}{\lambda-x} = 0$. Consequently, applying \cite[Theorem 16.23]{bauschke2017convex} once more, it follows $\iprod{z}{\lambda} = \iprod{z}{x} = \sigma_C(z) + \iota_C(x) = \sigma_C(z)$, i.e., $\lambda \in \partial \sigma_C(z)$.}
\end{proof}

\section{\texorpdfstring{\cref{example:strict-decomp-not-full}}{Example 3.2}: Fully Decomposable Univariate Functions} \label{app:example}

\begin{prop} 
Suppose that $\vp:\R\to\R$ is $C^2$-fully decomposable at $0$. Then $\vp$ can be represented as $\max\{\vp_1,\vp_2\}$ locally around $0$ with $\vp_1,\vp_2$ being $C^2(\R)$.
\end{prop}
\begin{proof}
       \myrevise{Without loss of generality, we may assume $\vp(0)$=0.} Suppose $\vp=\sigma_C\circ F$ with $C\subseteq \R^m$ being nonempty, closed, and convex and $F:\R\to \R^m$ being $C^2$ on $\R$, and let the nondegeneracy condition $\D F(0)\R+\lin(N_C(\lambda)) =\R^m$ hold for some $\lambda \in C$. Due to $\dim(\D F(0)\R)\leq 1$, it then holds that $\dim(\lin(N_C(\lambda)))\geq m-1$ or, equivalently, $\dim(\aff(C))\leq 1$. Given the structure of 1-dimensional closed convex sets, there are only four different forms for $C$ (up to orthogonal transformations): a singleton, a half-line, a line, and a $1$-dimensional interval in $\R^m$. More specifically, we can consider the following cases. %notice that we are free to choose any $\lambda\in C$):

        \textbf{Case 1}: $C=\{\lambda\}$. In this case, we have $\vp=\langle \lambda, F \rangle$ locally around $0$.

        \textbf{Case 2}: $C=\lambda+\R e_1$, where $e_1=(1,0,\dots,0)^\top\in \R^m$. In this case, $\vp$ can be written as $\vp=\langle \lambda, F \rangle+ \iota_{\{0\}}(F_1)$ locally around $0$. Since $\vp$ is real-valued, it follows $\vp=\langle \lambda, F \rangle$ locally around $0$. 

        \textbf{Case 3}: $C=\lambda+\R_+ e_1$. In this case, we obtain $\vp=\langle \lambda, F \rangle+ \iota_{\R_-}(F_1)$ locally around $0$. Reusing the argument from \textbf{Case 2}, we can conclude that $\vp$ locally agrees with $\langle \lambda, F \rangle$ around $0$.

        \textbf{Case 4}: $C=\lambda+\R_- e_1$. This case is similar to \textbf{Case 3}.

        \textbf{Case 5}: $C=[\lambda,\lambda+de_1]$ for some $d\geq 0$. In this case, we can write $\vp=\langle \lambda, F \rangle+ \max\{ dF_1, 0\}$ locally around $0$.

       Since $\lambda \in C$ is arbitrary, we may choose $\lambda$ as a vertex of $C$, if it exists. Therefore, the discussed five cases cover all possible sets $C$, and in each case, we have $\vp = \max\{\vp_1, \vp_2\}$.
\end{proof}

\section{Proof of \texorpdfstring{\cref{prop3-7}}{Proposition 4.4}} \label{app:invert}

\begin{proof}
    Let $\tilde D$ be sufficiently close to $D$. According to \cite[Lemma 4.3]{sun2002strong}, there exist eigendecompositions of $\tilde D$ and $D$ such that:
    \[  \tilde D=\tilde P^\top \tilde \Lambda \tilde P, \quad D=P^\top \Lambda P, \quad \|P-\tilde P\|=\mathcal O(\|D-\tilde D\|),\quad\|\Lambda-\tilde \Lambda\|=\mathcal O(\|D-\tilde D\|).                     \]
    Here, $\Lambda$ and $\tilde \Lambda$ are given by $\Lambda=\diag(\lambda_1,\dots, \lambda_n)$ with $\lambda_1\geq \dots\geq \lambda_\ell > 0=\lambda_{\ell+1}=\dots=\lambda_n$ and $\tilde \Lambda=\diag(\tilde\lambda_1,\dots,\tilde\lambda_n)$ with $\tilde\lambda_1\geq \dots\geq \lambda_q>0=\lambda_{q+1}=\dots=\lambda_n$. By the Lipschitz continuity of the eigenvalues \cite[Theorem 4.3.1]{horn2012matrix}, we clearly have $q\geq \ell$ for all $\tilde D$ sufficiently close to $D$. Let us set $A=\diag(\lambda_1,\dots,\lambda_\ell)$, $\tilde A=\diag(\tilde \lambda_1,\dots,\tilde \lambda_\ell)$, and $A_q=\diag(\tilde \lambda_1,\dots,\tilde \lambda_q)$ and let $\Pi_r\in\R^{n\times r}$, $\Pi_r x := (x^\top,0)^\top$ be the natural projection onto the first $r$ components. The assumption is equivalent to
    \[  Q:=\Pi_\ell^\top PBP^\top\Pi_\ell+\tau^{-1}(A^{-1}-I)  \succeq \sigma I.           \]
   Now, let us take sufficiently small neighborhoods $U$ of $D$ and $V$ of $B$, such that for all $\tilde D\in U$ and $\tilde B\in V$, it holds that $\tilde Q := \Pi_l^\top \tilde P\tilde B\tilde P^\top\Pi_l+{\tau^{-1}}(\tilde A^{-1}-I)  \succeq \tfrac{\sigma}{2} I$.                
   
Let us note the choice of $P$ depends on $\tilde D$, but the errors $\|P-\tilde P\|$ and $\|A^{-1}-\tilde A^{-1}\|$ are uniform for all $\tilde D$ sufficiently close to $D$, so the error $\|Q-\tilde Q\|$ can be arbitrarily small provided that $\|P-\tilde P\|$, $\|A^{-1}-\tilde A^{-1}\|$, and $\|B-\tilde B\|$ are sufficiently small. The desired condition $\tilde D\tilde B\tilde D+\frac{1}{\tau}\tilde D(I-\tilde D)\succeq \frac{\sigma}{4}\tilde D^2$ can also be expressed as:
    \[ R := \Pi_q^\top \tilde P\tilde B\tilde P^\top\Pi_q+{\tau^{-1}}(A_q^{-1}-I)\succeq \tfrac{\sigma}{4}I.           \] 
    Next, let us introduce $L_1:=\spa\{e_1,\dots,e_\ell\}$ and $L_2:=\spa\{e_{\ell+1},\dots,e_q\}$. For every $h\in \R^q$, we can write $h=h_1+h_2$ with $h_1\in L_1$ and $h_2\in L_2$. It then follows:
    \begingroup
    \allowdisplaybreaks
    \begin{align*}
        h^\top Rh &=h_1^\top R h_1+2h_1^\top Rh_2+h_2^\top Rh_2
        \\&=h_1^\top \tilde Q h_1+2h_1^\top Rh_2+h_2^\top Rh_2\geq \tfrac{\sigma}{2}\|h_1\|^2-2|h_1^\top Rh_2|+h_2^\top Rh_2.
    \end{align*}
    \endgroup
    If $q=\ell$, then we are done since $h=h_1$. Now assume $q> \ell$. Then, we have:
    \[   h_2^\top Rh_2=   h_2^\top\Pi_q^\top \tilde P\tilde B\tilde P^\top\Pi_qh_2+{\tau^{-1}}h_2^\top(A_q^{-1}-I)h_2\geq[{\tau^{-1}}(\tilde\lambda_{\ell+1}^{-1}-1)-\|\tilde B\|]\|h_2\|^2,                  \]
    $|h_1^\top Rh_2|=|h_1^\top\Pi_q^\top \tilde P\tilde B\tilde P^\top\Pi_qh_2|\leq \|\tilde B\|\|h_1\|\|h_2\|$, and 
    %\[  |h_1^\top Rh_2|=|h_1^\top\Pi_q^\top \tilde P\tilde B\tilde P^\top\Pi_qh_2|\leq \|\tilde B\|\|h_1\|\|h_2\|.                \]
    by Young's inequality, we can infer $2\|\tilde B\|\|h_1\|\|h_2\|\leq \frac{\sigma}{4}\|h_1\|^2+\frac{4\|\tilde B\|^2}{\sigma} \|h_2\|^2$. 
    %\[      \]
    Therefore, we obtain
    \[ h^\top Rh\geq \tfrac{\sigma}{4}\|h_1\|^2+\left[{\tau^{-1}}(\tilde\lambda_{\ell+1}^{-1}-1)-\|\tilde B\|-\tfrac{4\|\tilde B\|^2}{\sigma}  \right]\|h_2\|^2.         \]
    Shrinking $U,V$ (if necessary), we can ensure ${\tau^{-1}}(\tilde\lambda_{\ell+1}^{-1}-1)-\|\tilde B\|-\frac{4\|\tilde B\|^2}{\sigma}\geq \frac{\sigma}{4}$, which finishes the proof.
\end{proof}

%\section*{Acknowledgments}
%We would like to acknowledge the assistance of volunteers in putting
%together this example manuscript and supplement.
\vspace{-3ex}
\bibliographystyle{siamplain}
\bibliography{bib_part2}

\end{document}